\newtheorem{thm}{Theorem}[section]
 \newtheorem{cor}[thm]{Corollary}
 \newtheorem{lem}{Lemma}[section]
 \theoremstyle{definition}
 \newtheorem{defn}{Definition}[section]
 \theoremstyle{remark}
 \newtheorem{rem}{Remark}
 \newtheorem{example}{Example}[section]
 \numberwithin{equation}{section}
 \newcommand{\id}{\mathrm{id}}
\begin{document}

\newcommand{\auths}[1]{\textrm{#1},}
\newcommand{\artTitle}[1]{\textsl{#1},}
\newcommand{\jTitle}[1]{\textrm{#1}}
\newcommand{\Vol}[1]{\textbf{#1}}
\newcommand{\Year}[1]{\textrm{(#1)}}
\newcommand{\Pages}[1]{\textrm{#1}}
\newcommand{\RNum}[1]{\uppercase\expandafter{\romannumeral #1\relax}}

\title[Orbit configuration spaces of standard action]
 {Orbit configuration space of standard action and cellular methods of poset}

\email{}

\author{Junda Chen}
\address{School of Mathematical Sciences, East China Normal University, Shanghai,   China}
\address{Center for Topology and Geometry based Technology, Hebei Normal University,
	Shijiazhuang, China}
\email{jdchen@math.ecnu.edu.cn}
\thanks{The author is supported by \textit{Center for Topology and Geometry based Technology} of Hebei Normal University and Natural Science Foundation of China (NSFC grant no. 11971144) and High-level Scientific Research Foundation of Hebei Province.
}
\keywords{Orbit configuration space, subspace arrangement,  Grothendieck fibration, OS-algebra, cellular poset}
\begin{abstract}
We extend  the existing idea of "cellular poset", introduce a collection of "cellular methods" for the computation of homology of intersection lattice of a complicated subspace arrangement, and for the computation of multiplicative structure induced by intersection. As an application, we give a presentation of cohomology ring of orbit configuration space of standard action by cellular methods and a spectral sequence associated with Grothendieck fibration of poset.
\end{abstract}

\maketitle
\section{Introduction}

Cohomology of complements of subspace arrangements is a well studied field. The additive structure is originally given by Goresky and MacPherson in \cite{Goresky1988} using stratified Morse theory, the result has also been proved in \cite{Jewell1994,Jewell1994a,Vassiliev1992} by some different approach. The ring structure is studied in \cite{DeConcini1995,Yuzvinsky2002,Deligne2000,DeLongueville2001,Chen2020}. To computing the cohomology ring of complement, we need to \begin{itemize}
	\item Compute the homology of lower intervals of intersection lattice.
	\item Compute the multiplicative structure of homology induced by intersection of subspace.
\end{itemize}
See \cite{DeLongueville2001} for more details. Usually, we need to make a huge effort to obtain explicit presentation of cohomology ring of complement of a complicated arrangement. For example, the cohomology ring of "k-equal" manifold is given by Yuzvinsky in \cite{Yuzvinsky2002}.

\vskip .2cm

In this paper, we introduce some techniques, especially the "cellular methods" extending the idea in \cite{Everitt2015}, which are useful for computing the homology of complicated intersection lattice and multiplicative structure on it, we study a specific subspace arrangement $\mathcal{A}$ , whose complement is orbit configuration space of standard action. A standard $\mathbb{Z}_k^m$-action on $\mathbb{C}^m$ is a map $\varphi : \mathbb{Z}_k^m \times \mathbb{C}^m \rightarrow \mathbb{C}^m$ such that $$\varphi((z_1,z_2,...), (x_1,x_2,...))=(e^{2\pi iz_1/k}x_1, e^{2\pi iz_2/k}x_2,...)$$
Orbit configuration space $F_G(X,n)$ is defined by Xicotncatl \cite{Xicotncatl} that $$F_G(X,n)= \{(x_1,x_2,...,x_n)\in X^n | Gx_i \cap Gx_j = \emptyset \text{ for } i\neq j\}$$
\begin{defn}
	Let $M= \underbrace{\mathbb{C}^m\times...\times \mathbb{C}^m}_n$ and 
	$\mathcal{A}=\{V_{i,j,g}\}$ be a subspace arrangement in $M$ where $$V_{i,j,g}=\{(x_1,...,x_n)\in M | gx_i=x_j\} $$ for $i\neq j,g\in \mathbb{Z}_k^m$. Let $\mathcal{P}$ denote the intersection lattice of $\mathcal{A}$.
\end{defn}

It's obvious that orbit configuration space $F_{\mathbb{Z}_k^m}(\mathbb{C}^m,n)$ of standard action is complement of this arrangement $\mathcal{A}$.

\vskip .2cm
Many known results \cite{Casto2016,DENHAM2018,Feichtner2002,Bibby2018} about orbit configuration space rely on the assumption that the group action is free or almost free, the orbit configuration of non-free group action is partially discussed in \cite{Chen2021}.
In this paper, we give a presentation of cohomology ring $H^*(F_{\mathbb{Z}_k^m}(\mathbb{C}^m,n))$. 
The computation is based on some algebraic techniques which contains
\begin{itemize}
	\item A spectral sequence associated with Grothendieck fibration of join semilattice, see Section 3. 
	\item Cellular methods of graded poset, see Section 4.
\end{itemize}

Roughly speaking, the intersection lattice $\mathcal{P}$ is closely related with a Grothendieck fibration $L_k^m$ on partition lattice $\Pi_n$, then  the spectral sequence be applied and the fiber and base are both "cellular". 

The cohomological \textit{cellular poset} is defined by Everitt and
Turner in \cite{Everitt2015}, we extend their idea and give the definition of homological cellular for a pair of poset $P$ and a copresheaf $\mathcal{G}$ on it. We also define "cellular form" $\varLambda(P,\mathcal{G})$ of pair $(P,\mathcal{G})$ by it's property, which can be checked in polynomial time and it is the core of our cellular methods. See Section 4 for more details.

Using the cellular methods, we give the multiplicative structure of homology of intersection lattice by "construct" morphism of cellular forms rather than "compute" it, see Section 5. Actually, these techniques have expanded far beyond the application in this paper. It seems useful when consider more general cases, for example, orbit configuration space of reflection group.

\vskip .2cm

Before the main results of this paper, we need some definitions.

\begin{defn}
	Let $L=\Pi_n$ be the partition lattice of $[n]$. For each partition $I\in L$, let $I'$ be the set of blocks that contains at least two points. A $\mathbb{Z}_k$-coloring of a block $p\in I'$ is an \textbf{equivalence class} $\phi$ of functions $f: p \to \mathbb{Z}_k$ with relation $gf \sim f$ for all $g\in \mathbb{Z}_k$. 

	Let $\mathcal{C}_I^m$ be the set of partial matrix (matrix with some undefined entry "$?$"), looks like
	$$\theta=
	\bordermatrix{
		& 1 & 2 & ...& m \cr
		p_0 & \phi_{1} & ? & ...& \phi_{2} \cr
		p_1 & ? & \phi_{3} & ... & ? \cr
		... & ... & ... & ... & ...\cr
		p_s & ? & \phi_{4} &...& \phi_{5}\cr
	}$$
	indexed by $I'\times [m]$ and every entry $\theta_{p,t}$ is a $\mathbb{Z}_k$-coloring of block $p$ or an undefined "$?$". 
	
	Define \begin{itemize}
		\item $\text{Ud}(\theta)$ be the set of index $(p,t)\in I'\times [m]$ such that $\theta_{p,t}$ is undefined.
		\item $L_k^m = \sqcup_{I \in L}\mathcal{C}_I^m$
		\item $\pi : L_k^m \to L$ be the natural projection.
	\end{itemize}
	
\end{defn}

\begin{defn}
	Assume $\theta \in \mathcal{C}_I^m$. An $l$-completion of $\theta$ is made by changing $l$ undefined entries to a coloring of its row index. We say it is \textbf{completion} of $\theta$ if there is no "$?$" left. Then $\mathcal{C}_I^m$ becomes a poset with relation $\theta_1 \leq \theta_2$ iff $\theta_1$ is a $l$-completion of $\theta_2$ for some natural number $l$. Let $r_f(\theta)$ be the number of "$?$" in $\theta$ and $r_b(\theta)$ be the rank of $I$ in geometric lattice $L=\Pi_n$ for any $\theta \in \mathcal{C}_I^m$. It's obvious that $\mathcal{C}_I^m$ is graded poset with rank function $r_f$. 
\end{defn}
\begin{rem}

We will illustrate in Section 5 that there is a well defined join semilattice structure on $L_k^m$, compatible with partial order on $\mathcal{C}_I^m$, and the projection map $\pi: L_k^m \to L$ is a Grothendieck fibration of join semilattice. We call two elements $\theta  \in \mathcal{C}_{I}^m, \psi  \in \mathcal{C}_{J}^m$ are \textbf{\textit{independent}} in $L_k^m$ if $r_b(\theta)+r_b(\psi)=r_b(\theta\vee\psi)$ and $r_f(\theta)+r_f(\psi)=r_f(\theta\vee\psi)$.
\end{rem}
\begin{defn}\label{defcp}
	Let $\theta \in \mathcal{C}_{I}^m$,  denote $\text{Cp}(\theta)$ the free abelian group generated by all completions $\{\eta_1,\eta_2,...\}$ of $\theta$. Denote $\text{BCp}(\theta)$ the subgroup of all formal sum $\sum_{i}k_{i} \eta_i$ in $\text{Cp}(\theta)$ satisfying
	$$\sum_{\eta_i <\nu} k_{i}=0$$ for any  $\nu\leq\theta$ with rank $r_f(\nu)=1$.
	
\end{defn}


\begin{thm}\label{main}
    For $m>1$, $H^*(F_{\mathbb{Z}_k^m}( \mathbb{C}^m, n))$ is a $L_k^m$-graded $\mathbb{Z}$-algebra, i.e.
    $$H^*(F_{\mathbb{Z}_k^m}( \mathbb{C}^m, n)) = \bigoplus_{\theta\in L_k^m}H^*(F_{\mathbb{Z}_k^m}( \mathbb{C}^m, n))_{\theta}$$ and every piece $H^*(F_{\mathbb{Z}_k^m}( \mathbb{C}^m, \Gamma))_{\theta}$ is a free abelian group $A^*(L)_{\pi(\theta)}\otimes \text{BCp}(\theta)$, has rank 
    \begin{equation}\label{eq of rank}
    	|\mu(\hat{0},\pi(\theta))|\prod_{(p,t)\in \text{Ud}(\theta)} (k^{|p|-1}-1)
    \end{equation}
    where $A^*(L)$ is OS-algebra of partition lattice $L=\Pi_n$ (see \cite{Yuzvinsky2001} or \cite{Dimca2009} for OS-algebra), $A^*(L)_{\pi(\theta)}$ is the piece of grading $\pi(\theta)$, $\mu(-,-)$ is M\"{o}bius function.
    Every element in \\$H^*(F_{\mathbb{Z}_k^m}( \mathbb{C}^m, \Gamma))_{\theta}$ has degree $(2m-1)r_b(\theta)+r_f(\theta)$. 
    
    For any two elements \begin{align*}
    	x=a\otimes\sum k_i\eta_i \in H^*(F_{\mathbb{Z}_k^m}( \mathbb{C}^m, \Gamma))_{\theta} \\y=b\otimes\sum s_i\nu_i \in H^*(F_{\mathbb{Z}_k^m}( \mathbb{C}^m, \Gamma))_{\psi}
    \end{align*}
    the cup product $x\cup y$ is contained in $H^*(F_{\mathbb{Z}_k^m}( \mathbb{C}^m, \Gamma))_{\theta\vee \psi}$, given by 
    \begin{equation}\label{eq of prod}
     x\cup y =\begin{cases}
    	\text{sgn}(|\theta,\psi|)(a\cdot b) \otimes \sum_{ij}k_is_j\eta_i\vee\nu_j  & \text{ for independent } \theta,\psi\\
    	0 & \text{ otherwise }	
    \end{cases}\end{equation}
    where $a\in A^*(L)_{\pi(\theta)},b \in A^*(L)_{\pi(\psi)},\sum k_i\eta_i\in \text{BCp}(\theta), \sum s_i\nu_i \in \text{BCp}(\psi)$, $\vee$ is the join operation in $L_k^m$, $a\cdot b$ is the product in OS-algebra $A^*(L)$ and $|\theta,\psi|$ is a permutation associated with $\theta,\psi$ defined in Section 5.5.
\end{thm}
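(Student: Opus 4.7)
The plan is to combine the De Longueville--Yuzvinsky presentation of the cohomology ring of an arrangement complement with the Grothendieck-fibration spectral sequence from Section 3 and the cellular technology of Section 4. The first step is to identify the intersection lattice $\mathcal{P}$ of $\mathcal{A}$ with the subposet of $L_k^m$ consisting of fully colored elements (no $?$ entries): for an intersection $\bigcap V_{i_\alpha,j_\alpha,g_\alpha}$, the indices forced to be equivalent form blocks of a partition $I\in \Pi_n$, and for each pair $(p,t)$ with $p$ a non-singleton block and $t\in[m]$ a coordinate, the prescribed $\mathbb{Z}_k$-displacements among the $t$-th coordinates $x_{i,t}$ for $i\in p$ give precisely a $\mathbb{Z}_k$-coloring of $p$. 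This identifies each intersection with an element of $L_k^m$ of codimension $m\cdot r_b(\theta)$.

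For the additive statement, Goresky--MacPherson expresses $H^*$ as a direct sum over $\mathcal{P}$ of shifted reduced homologies of intervals $(\hat{0},x)$. Along the projection $\pi:L_k^m\to L=\Pi_n$ restricted to any such interval, Section 3 supplies a spectral sequence whose base piece computes the Whitney homology of $(\hat{0},\pi(\theta))\subset \Pi_n$ (known to produce the OS-algebra summand $A^*(L)_{\pi(\theta)}$) and whose fiber piece computes the homology of $(\hat{0},\theta)\subset \mathcal{C}_{\pi(\theta)}^m$. I would then verify that the pair $(\mathcal{C}_I^m,\mathcal{G})$, where $\mathcal{G}$ is the copresheaf $\theta\mapsto\text{Cp}(\theta)$, is cellular in the sense of Section 4; a direct computation of its cellular form $\varLambda(\mathcal{C}_I^m,\mathcal{G})$ identifies the fiber homology with $\text{BCp}(\theta)$ exactly as in Definition \ref{defcp}. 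For $m>1$ the bidegrees separate cleanly, forcing degeneration at $E_2$ and yielding the tensor decomposition $A^*(L)_{\pi(\theta)}\otimes \text{BCp}(\theta)$. The rank (\ref{eq of rank}) follows multiplicatively: $|\mu(\hat{0},\pi(\theta))|$ from the classical Möbius computation on $\Pi_n$, and each $(p,t)\in\text{Ud}(\theta)$ contributing $k^{|p|-1}-1$ admissible completions (the $-1$ is the row-constant coloring excluded by Definition \ref{defcp}). The degree formula $(2m-1)r_b(\theta)+r_f(\theta)$ then comes from subtracting the simplicial dimension $r_b(\theta)-1+r_f(\theta)$ of a top cell of the product interval from the Goresky--MacPherson shift $2m\cdot r_b(\theta)$.

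For the cup product (\ref{eq of prod}), my strategy is to \emph{construct} a morphism of cellular forms rather than to compute the product on chains. The intersection of subspaces corresponds to join in $L_k^m$, which factors into the join on the base $\Pi_n$ (producing the OS-algebra product $a\cdot b$) and the join on the fibers (producing $\sum_{i,j} k_is_j\,\eta_i\vee \nu_j$). The independence condition $r_b(\theta)+r_b(\psi)=r_b(\theta\vee\psi)$ and $r_f(\theta)+r_f(\psi)=r_f(\theta\vee\psi)$ is precisely the condition for the lower interval below $\theta\vee\psi$ to decompose as a join of the lower intervals below $\theta$ and $\psi$; when independence fails, the product lives in a summand of strictly smaller rank and must vanish for dimension reasons, exactly as in the De Longueville formalism. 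The main obstacle I expect is pinning down the sign $\text{sgn}(|\theta,\psi|)$: this requires fixing a consistent orientation convention on the cells of $\varLambda(L_k^m,-)$ and checking that shuffling a completion of $\theta\vee\psi$ into a pair of completions of $\theta$ and $\psi$ introduces exactly this sign. This calls for a careful naturality argument at the chain level comparing the cellular resolution of Section 4 with the nerve of the intersection lattice, which appears to be the technical core of Section 5.
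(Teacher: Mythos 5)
Your plan fails at its very first step, and the error propagates. You identify $\mathcal{P}$ with the subposet of $L_k^m$ of \emph{fully colored} elements (no ``?'' entries), every intersection having codimension $m\cdot r_b(\theta)$. This is not what the intersection lattice looks like: imposing two incompatible conditions, e.g.\ $gx_i=x_j$ and $g'x_i=x_j$ with $g_t\neq g'_t$, forces $x_{it}=x_{jt}=0$ in that coordinate, which is exactly an undefined entry ``?'' and raises the codimension to $m\cdot r_b(\theta)+r_f(\theta)$ (Equation \ref{eqcod}). Thus elements of $\mathcal{P}$ correspond to partial matrices with ``?'' entries, and moreover the comparison map $\sigma: L_k^m\to\mathcal{P}$ is surjective but \emph{not} injective (splitting an undefined row does not change the subspace, Lemma \ref{compare map0}); $\mathcal{P}$ is a quotient-like collapse of $L_k^m$, not a subposet of it. Because of this, your bookkeeping is off: with your codimensions the Goresky--MacPherson shift would give degree $2m\,r_b(\theta)$ minus your interval dimension, i.e.\ $(2m-1)r_b(\theta)-r_f(\theta)+1$, not the asserted $(2m-1)r_b(\theta)+r_f(\theta)$; and the grading over all of $L_k^m$ (including elements with ``?''), which is strictly finer than a grading over $\mathcal{P}$, cannot even be formulated from your identification. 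What is missing is precisely the mechanism the paper uses to transfer the computation from $\mathcal{P}$ to $L_k^m$: since $\sigma$ is a surjective join-preserving map, $\text{Tor}^{\mathcal{P}}_*(\delta_x\mathbb{Z},\delta^{M}\mathbb{Z})\cong\text{Tor}^{L_k^m}_*(\delta_{\sigma^{-1}x}\mathbb{Z},\delta^{\hat{0}}\mathbb{Z})$ (Lemma \ref{lem of pull back}); only after this transfer can the fibration spectral sequence over $\Pi_n$ be applied, and a single $x\in\mathcal{P}$ then contributes $\bigoplus_{\theta\in\sigma^{-1}x}A^*(L)_{\pi(\theta)}\otimes\text{BCp}(\theta)$ (Lemma \ref{addstr}), which is how the $L_k^m$-grading arises. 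You also apply the fiberwise cellular machinery to the wrong pair: the relevant cellular pair is $(\mathcal{C}_I^m,\overline{\mathbb{Z}})$, whose cellular form is $\bigoplus_\theta\text{BCp}(\theta)$ (Theorem \ref{cf1}); taking the copresheaf $\theta\mapsto\text{Cp}(\theta)$ does not compute the fiber terms $\text{Tor}^{\mathcal{C}_I^m}_*(\delta_\theta\mathbb{Z},\overline{\mathbb{Z}})$ that enter the $E^2$ page.

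Your outline of the multiplicative structure is closer in spirit to the paper (cup product via de Longueville--Schultz, OS-product on the base, join on completions in the fiber, the sign coming from reordering $\text{Ud}(\theta)\sqcup\text{Ud}(\psi)$ against $\text{Ud}(\theta\vee\psi)$, vanishing in the dependent case), and you correctly flag the sign as the delicate point; the paper handles it by constructing the morphism of cellular forms $\text{BCp}(\theta)\otimes\text{BCp}(\psi)\to\text{BCp}(\theta\vee\psi)$ explicitly (Theorem \ref{mf1}) and invoking uniqueness of morphisms of cellular forms, rather than a chain-level comparison with the nerve. But this part, too, presupposes the correct comparison between $\mathcal{P}$ and $L_k^m$: the codimension condition $x+y=M$ of Theorem \ref{ssa} translates into independence in $L_k^m$ only through Equation \ref{eqcod}, and the product must be pulled back along $\sigma\times\sigma$ and $\sigma$ before the fibration and cellular arguments apply. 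As written, the proposal has a genuine gap at the foundation and would not produce the stated additive structure, rank, or degree formulas.
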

\begin{rem}
	We will prove in Section 5.6 that $\sum_{ij}k_is_j\eta_i\vee\nu_j$ in above expression is a well defined element of $\text{BCp}(\theta\vee \psi)$ for independent $\theta,\psi$.
\end{rem}

The standard action $\mathbb{Z}_2^m \curvearrowright \mathbb{R}^m$ is real part of standard $\mathbb{Z}_2^m \curvearrowright \mathbb{C}^m$. We have a similar result up to an error term, see Theorem \ref{thm real}.

\vskip 1em
This paper is organized as follows.    In Section 2, we review some concepts of homological algebra of poset. In Section 3, we introduce a spectral sequence associated with Grothendieck fibration of poset. In Section 4, we introduce the main technique of this paper which we call it "cellular methods". In Section 5, we apply the toolkit have developed and give a presentation of cohomology ring of more general "chromatic configuration space" of standard action, the Theorem \ref{main} will be a direct corollary. In Appendix, we give a useful little trick about cellular form and an algorithm to construct cellular form.

\section{Preliminaries}
In this section, we review some known definitions and results, give some notations used frequently in this paper.

We always regard a poset $P$ as a category, i.e., there is an arrow $x\to y$ iff $x \leq y$. 

For $x\in P$, define $P_{\leq x}= \{z\in P|z\leq x\}$, $P_{<x},P_{\geq x},P_{> x}$ are defined similarly.

For $x\leq y$ in $P$, let $(x,y)$ denote the open interval $\{z\in P| x< z< y\}$ and $[x,y]$ denote the closed interval $\{z\in P| x\leq z \leq y\}$. Half open intervals $(x, y]$ and $[x, y)$ are defined similarly.

$\varDelta P$ denote the order complex of $P$, a simplicial complex whose faces are chains of $P$. $\varDelta\hspace{-0.5em}\varDelta[x,y]$ denote a pair $(\varDelta[x,y],\varDelta(x,y]\cup\varDelta[x,y))$ as in \cite{DeLongueville2001}.

A poset P is said to be a join semilattice if every pair of elements $x, y \in P $ has a join $x\vee y$, the least upper bound of $x,y$.

\subsection{Presheaf and copresheaf}
\begin{defn}
	A presheaf (of abelian group) on poset $P$ is a functor $\mathcal{F}$ from $P^{op}$ to $Ab$. For $x\leq y \in P$ and $a\in \mathcal{F}(y)$, the restriction $\mathcal{F}(x\leq y)(a)$ is abbreviated as $a|_x$ for convenient . Let $\text{PSh}(P)$ be the category of all presheaf on $P$ whose morphism is natural transformation. 
	
	Similarly, a copresheaf on poset $P$ is a functor $\mathcal{G}$ from $P$ to $Ab$, the extension $\mathcal{G}(x\leq y)(a)$ is abbreviated as $a|^y$ for $a\in \mathcal{F}(x)$, denote $\text{CoPSh}(P)$ the category of all copresheaf on $P$ whose morphism is  natural transformation.
\end{defn}

\begin{example}
	Let $A$ be an abelian group, denote $\underline{A}$ the constant presheaf with value $A$, i.e., a presheaf assigns each element in $P$ the value $A$, and all of whose restriction maps are identity. Similarly, let $\overline{A}$ be the constant copresheaf with value $A$.
\end{example}

\begin{example}
	Let $Q$ be a subset of $P$ satisfying $x,y \in Q, x\leq y \Rightarrow [x,y] \subseteq Q$, denote $\delta_Q A$ the presheaf on $P$ that has constant value $A$ on $Q$ and zero otherwise. Similarly, let $\delta^Q A$ be the copresheaf with constant value $A$ on $Q$ and zero otherwise. If $Q = \{x\}$ has only one element, we abbreviate $\delta_{\{x\}} A$ ($\delta^{\{x\}} A$) as $\delta_{x} A$ ($\delta^{x} A$).
\end{example}

\begin{defn}[Cartesian product of presheaves or copresheaves]
	Let $\mathcal{F},\mathcal{E}$ be presheaves  on $P,Q$,
	define $\mathcal{F}\times\mathcal{E}$ be the presheaf on $P\times Q$ that $(\mathcal{F}\times\mathcal{E})(x,y)=\mathcal{F}(x)\otimes\mathcal{E}(y)$ and $(a\otimes b)|_{(x,y)}=a|_x \otimes b|_y$.	The definition of cartesian product of copresheaves are similar.
\end{defn}

\begin{defn}\label{fmap}
	Let $f : P \to Q$ be a morphism of poset, i.e., preserve partial order, we denote $$f^{*} : \text{PSh}(Q) \to \text{PSh}(P)$$ the functor that associates $\mathcal{E}$ on $Q$ the presheaf $f^{*}\mathcal{E} = \mathcal{E} \circ f$. 
\end{defn}
There is a well known functor $f_*$ left adjoint to $f^*$, the following theorem is a simple version of \cite[Theorem 1.3.1]{Artin1962}
\begin{thm}
	Define $$f_* : \text{PSh}(P) \to \text{PSh}(Q)$$ the functor that associates $\mathcal{F}$ on $P$ the presheaf $f_*\mathcal{F}(y) = \text{colim}_{\{x\in P|f(x)\geq y\}^{op}} \mathcal{F}$, 
	then $f_*$ is left adjoint to the functor $f^{*}$ , i.e.,
	$$\hom_{\text{PSh}(P)}(\mathcal{F},f^{*}\mathcal{E})=\hom_{\text{PSh}(Q)}(f_*\mathcal{F},\mathcal{E})$$
	holds bifunctorially.
\end{thm}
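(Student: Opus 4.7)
My plan is to prove the adjunction directly by constructing mutually inverse natural bijections between $\hom_{\text{PSh}(P)}(\mathcal{F},f^{*}\mathcal{E})$ and $\hom_{\text{PSh}(Q)}(f_*\mathcal{F},\mathcal{E})$, recognizing the given pointwise formula as the left Kan extension of $\mathcal{F}\colon P^{op}\to Ab$ along $f^{op}\colon P^{op}\to Q^{op}$. As a preliminary step I would verify that $f_*\mathcal{F}$ really is a presheaf on $Q$: for $y\le y'$ in $Q$, the condition $f(x)\ge y'$ implies $f(x)\ge y$, so there is an inclusion of indexing posets $\{x\mid f(x)\ge y'\}^{op}\hookrightarrow\{x\mid f(x)\ge y\}^{op}$; composing the coprojections of the larger colimit with this inclusion yields a cone on the smaller diagram, and the universal property produces the restriction map $f_*\mathcal{F}(y')\to f_*\mathcal{F}(y)$, with functoriality in $y$ automatic since both restrictions are built from the same family of coprojections.

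Given $\phi\colon\mathcal{F}\to f^{*}\mathcal{E}$, for each $y\in Q$ and each $x$ with $f(x)\ge y$ I form the composite
\[
\mathcal{F}(x)\xrightarrow{\phi_x}\mathcal{E}(f(x))\to\mathcal{E}(y),
\]
where the second arrow is the presheaf restriction of $\mathcal{E}$ along $y\le f(x)$. For a comparison arrow $x_2\le x_1$ in the index poset, naturality of $\phi$ (giving $\phi_{x_2}\circ\mathcal{F}(x_2\le x_1)=\mathcal{E}(f(x_2)\le f(x_1))\circ\phi_{x_1}$) combined with the composition law on $\mathcal{E}$ shows that this family is a cone, so the universal property produces $\widetilde\phi_y\colon f_*\mathcal{F}(y)\to\mathcal{E}(y)$; naturality of $\widetilde\phi$ in $y$ is immediate, again because both sides are computed from the same coprojections.

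In the reverse direction, given $\psi\colon f_*\mathcal{F}\to\mathcal{E}$, the trivial relation $f(x)\ge f(x)$ furnishes a canonical coprojection $\iota_x\colon\mathcal{F}(x)\to f_*\mathcal{F}(f(x))=(f^{*}f_*\mathcal{F})(x)$, and I set $\widetilde\psi_x:=\psi_{f(x)}\circ\iota_x$. Naturality of $\widetilde\psi$ follows because for $x_2\le x_1$ in $P$ the coprojection $\iota_{x_2}$ factors as $\iota_{x_1}$ followed by the restriction $f_*\mathcal{F}(f(x_1))\to f_*\mathcal{F}(f(x_2))$ induced by $f(x_2)\le f(x_1)$. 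Mutual inversion then splits into two formal verifications: the identity $\widetilde{\widetilde\phi}=\phi$ reduces to $\widetilde\phi_{f(x)}\circ\iota_x=\phi_x$, which holds because at the $x$-component of the defining cone the restriction $\mathcal{E}(f(x))\to\mathcal{E}(f(x))$ is the identity; and $\widetilde{\widetilde\psi}=\psi$ follows from the universal property of the colimit, since $\widetilde{\widetilde\psi}$ and $\psi$ induce the same cone on the colimit diagram. Bifunctoriality in $\mathcal{F}$ and $\mathcal{E}$ is inherited from the functoriality of the colimit construction. The main obstacle is essentially bookkeeping: correctly tracking whether each arrow lives in $P$, $P^{op}$, $Q$, or $Q^{op}$ and verifying that each naturality and cone-compatibility square commutes; alternatively, one can bypass the hand construction by invoking the general existence of pointwise left Kan extensions in the presheaf category (using that $Ab$ is cocomplete) after identifying the comma category $f^{op}/y$ with $\{x\in P\mid f(x)\ge y\}^{op}$.
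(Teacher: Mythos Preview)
Your proof is correct: it is the standard direct verification that the pointwise left Kan extension of $\mathcal{F}\colon P^{op}\to Ab$ along $f^{op}$ is left adjoint to precomposition with $f^{op}$. The paper itself does not prove this theorem; it merely states the result as a special case of Artin's \emph{Grothendieck topologies} notes, so there is no argument in the paper to compare against. One minor wording issue: when you write that ``$\iota_{x_2}$ factors as $\iota_{x_1}$ followed by the restriction,'' the domains do not literally match; what you actually need (and what suffices for naturality of $\widetilde\psi$) is the commuting square
\[
(f_*\mathcal{F})(f(x_2)\le f(x_1))\circ\iota_{x_1}=\iota_{x_2}\circ\mathcal{F}(x_2\le x_1),
\]
i.e., that $\iota\colon\mathcal{F}\to f^{*}f_*\mathcal{F}$ is a natural transformation (the unit of the adjunction).
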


\begin{example}
	Let $\bullet$ be the poset with single element, $j_x : \bullet \to P$ be the map that $j_x(\bullet)=x$ where $x$ is a given element of $P$, its easy to check that $j_{x*}\underline{\mathbb{Z}}$ equals the presheaf $\delta_{P_{\leq x}}\mathbb{Z}$.
\end{example}

The next lemma is dual version of \cite[Exercise 2.3.7]{Weibel1994}
\begin{lem}\label{lem of resolution}
	$j_{x*}\underline{\mathbb{Z}}$ is projective object in $\text{PSh}(P)$. For a given presheaf $\mathcal{F}$, we can always choose a projective resolution $\mathcal{F}_i\to\mathcal{F}$ such that each $\mathcal{F}_i$ is direct sum of some presheaves in the form of $j_{x*}\underline{\mathbb{Z}}$.
\end{lem}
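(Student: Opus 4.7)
The plan is to deduce both statements from the adjunction $f_* \dashv f^*$ specialized to $f = j_x$. First I would verify projectivity of $j_{x*}\underline{\mathbb{Z}}$: by the adjunction, for any presheaf $\mathcal{E}$ on $P$,
\[
\hom_{\text{PSh}(P)}(j_{x*}\underline{\mathbb{Z}},\mathcal{E}) \;=\; \hom_{\text{PSh}(\bullet)}(\underline{\mathbb{Z}}, j_x^{*}\mathcal{E}) \;=\; \hom_{Ab}(\mathbb{Z},\mathcal{E}(x)) \;\cong\; \mathcal{E}(x),
\]
so the functor $\hom_{\text{PSh}(P)}(j_{x*}\underline{\mathbb{Z}},-)$ is naturally isomorphic to the evaluation functor $\mathcal{E}\mapsto \mathcal{E}(x)$. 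Since limits and colimits in $\text{PSh}(P)$ are computed pointwise, evaluation at $x$ is exact, and therefore $j_{x*}\underline{\mathbb{Z}}$ is projective.

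Next I would construct the resolution. The same adjunction identifies an element $a\in\mathcal{F}(x)$ with a morphism $\hat{a}\colon j_{x*}\underline{\mathbb{Z}}\to\mathcal{F}$. Taking the direct sum of all such morphisms, set
\[
\mathcal{F}_0 \;=\; \bigoplus_{x\in P}\bigoplus_{a\in\mathcal{F}(x)} j_{x*}\underline{\mathbb{Z}} \;\xrightarrow{\;\varepsilon\;}\; \mathcal{F},
\]
where $\varepsilon$ is assembled from the $\hat{a}$'s. Evaluating at any $y\in P$ shows $\varepsilon_y$ is surjective, because $a\in\mathcal{F}(y)$ is hit by its corresponding summand. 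Hence $\varepsilon$ is a pointwise, and therefore categorical, epimorphism from a projective object (direct sums of projectives are projective). Let $\mathcal{K}_0 = \ker \varepsilon$ and iterate the construction on $\mathcal{K}_0$ to produce $\mathcal{F}_1\twoheadrightarrow \mathcal{K}_0$, etc.; splicing these short exact sequences yields the desired resolution, in which every term is a direct sum of presheaves of the form $j_{x*}\underline{\mathbb{Z}}$.

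The only subtle point, and where I would spend the most care, is the identification $\hom(j_{x*}\underline{\mathbb{Z}},\mathcal{E}) \cong \mathcal{E}(x)$ which underlies both parts. It uses the adjunction of the theorem quoted just before the lemma, together with the explicit description of $j_x^{*}\mathcal{E}$ as the single abelian group $\mathcal{E}(x)$. Once this identification is in hand, the surjectivity of $\varepsilon$ and the exactness of evaluation are immediate, and the rest of the argument is a direct dualization of the standard construction in \cite[Exercise 2.3.7]{Weibel1994}. No further obstacles are anticipated.
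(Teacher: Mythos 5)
Your proof is correct and is exactly the standard argument the paper has in mind: the paper gives no written proof, merely citing the dual of \cite[Exercise 2.3.7]{Weibel1994}, and your identification $\hom_{\text{PSh}(P)}(j_{x*}\underline{\mathbb{Z}},\mathcal{E})\cong\mathcal{E}(x)$ via the adjunction, exactness of pointwise evaluation, and the generator-indexed surjection $\bigoplus_{x,a} j_{x*}\underline{\mathbb{Z}}\twoheadrightarrow\mathcal{F}$ followed by iteration on kernels is precisely that argument. No gaps.
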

\vskip 1cm

\subsection{Homological algebra of poset}~\\

In this subsection, $P$ is a given poset and there is a presheaf $\mathcal{F}$ and  a copresheaf $\mathcal{G}$ on $P$.
\begin{defn}[Tensor product of functor]
	The tensor product $\mathcal{F} \otimes_P \mathcal{G}$ is the coequalizer $$\mathcal{F} \otimes_P \mathcal{G} = \text{coeq}(\bigoplus_{y\leq x \in P}\xymatrixcolsep{5pc}\xymatrix{\mathcal{F}(x)\otimes \mathcal{G}(y) \ar@<0.3ex>[r]^-{\mathcal{F}(y\leq x)\otimes 1} \ar@<-0.3ex>[r]_-{1\otimes \mathcal{G}(y\leq x)} & \bigoplus_{p\in P} \mathcal{F}(p)\otimes \mathcal{G}(p)}) $$
	
\end{defn}
\begin{example}
	If $\mathcal{G}$ is a constant copresheaf $\overline{\mathbb{Z}}$, then the functor $- \otimes_P \mathcal{G} $ is natural isomorphic with $\text{colim}_{P^{op}}$. 
\end{example}
\begin{example}\label{ex yoneda}
	$j_{x*}\underline{\mathbb{Z}} \otimes_P \mathcal{G} \cong \mathcal{G}(x)$ since every element in $j_{x*}\underline{\mathbb{Z}} \otimes_P \mathcal{G}$ can be represented by an element $1\otimes a$ for some $a\in \mathcal{G}(x)$. Under this isomorphism, for any $x\leq y$, the map $j_{x*}\underline{\mathbb{Z}}\otimes_P \mathcal{G} \to j_{y*}\underline{\mathbb{Z}}\otimes_P \mathcal{G}$ induced by inclusion $j_{x*}\underline{\mathbb{Z}} \hookrightarrow j_{y*}\underline{\mathbb{Z}}$ coincide with extension map $\mathcal{G}(x) \to \mathcal{G}(y)$, because $1\otimes a$ and $1\otimes a|^y$ represent same element in $j_{y*}\underline{\mathbb{Z}} \otimes_P \mathcal{G}$.
\end{example}
The following lemma about tensor will be used in Section 3.
\begin{lem}
Let $f:P\to Q$ be a morphism of poset, $\mathcal{F}$ be a presheaf on $P$, $\mathcal{G}$ be a copresheaf on $Q$, then there is a natural isomorphism
\begin{equation}\label{eq fib tensor}
	\mathcal{F}\otimes_P f^{*}\mathcal{G}\cong f_{*}\mathcal{F}\otimes_Q\mathcal{G}
\end{equation}	
\end{lem}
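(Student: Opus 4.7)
The plan is to reduce the general case to the ``representable'' presheaves $j_{x*}\underline{\mathbb{Z}}$ by invoking Lemma~\ref{lem of resolution}. First I observe that both sides of \eqref{eq fib tensor} are functors of $\mathcal{F}$ (with $f$ and $\mathcal{G}$ held fixed) that preserve colimits: the left hand side because $-\otimes_P f^{*}\mathcal{G}$ is a coequalizer of direct sums, and the right hand side because $f_*$ is a left adjoint (hence colimit-preserving) and $-\otimes_Q \mathcal{G}$ is again a coequalizer of direct sums. In particular, both functors are right exact and commute with arbitrary direct sums. By Lemma~\ref{lem of resolution}, every presheaf $\mathcal{F}$ admits a two-term resolution by direct sums of presheaves of the form $j_{x*}\underline{\mathbb{Z}}$, so it suffices to construct a natural isomorphism of the two sides when $\mathcal{F} = j_{x*}\underline{\mathbb{Z}}$ for each $x\in P$.

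For such $\mathcal{F}$, Example~\ref{ex yoneda} applied to the copresheaf $f^{*}\mathcal{G}$ on $P$ gives
$$
j_{x*}\underline{\mathbb{Z}} \otimes_P f^{*}\mathcal{G} \;\cong\; (f^{*}\mathcal{G})(x) \;=\; \mathcal{G}(f(x)).
$$
For the right side I first compute $f_*(j_{x*}\underline{\mathbb{Z}})$. Writing $j_x \colon \bullet \to P$ and noting that $f \circ j_x = j_{f(x)} \colon \bullet \to Q$, I use the composition of pullbacks $(f\circ j_x)^{*} = j_x^{*}\circ f^{*}$ together with uniqueness of left adjoints to obtain $f_* \circ (j_x)_* \cong (j_{f(x)})_*$, so $f_*(j_{x*}\underline{\mathbb{Z}}) \cong j_{f(x)*}\underline{\mathbb{Z}}$. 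Applying Example~\ref{ex yoneda} a second time, now on $Q$, yields
$$
f_*(j_{x*}\underline{\mathbb{Z}}) \otimes_Q \mathcal{G} \;\cong\; j_{f(x)*}\underline{\mathbb{Z}}\otimes_Q \mathcal{G} \;\cong\; \mathcal{G}(f(x)).
$$
Under both chains of identifications, the element $1\otimes a$ with $a\in \mathcal{G}(f(x))$ represents the same element on each side, so the candidate isomorphism is literally the identity map of $\mathcal{G}(f(x))$ on generators.

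It remains to verify naturality. For $x \leq y$ in $P$ the morphism $j_{x*}\underline{\mathbb{Z}} \hookrightarrow j_{y*}\underline{\mathbb{Z}}$ induces on the left hand side the extension map $a \mapsto a|^{f(y)}$ from $\mathcal{G}(f(x))$ to $\mathcal{G}(f(y))$, by the compatibility noted in Example~\ref{ex yoneda}; on the right hand side it induces $j_{f(x)*}\underline{\mathbb{Z}} \hookrightarrow j_{f(y)*}\underline{\mathbb{Z}}$ (using that $f$ preserves order) and, after tensoring with $\mathcal{G}$, the same extension map. Right-exactness in $\mathcal{F}$ then propagates the natural isomorphism through the chosen resolution to arbitrary $\mathcal{F}$, and naturality in $\mathcal{G}$ is immediate from the construction. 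The only delicate step to keep honest is that $f_* \circ (j_x)_* \cong (j_{f(x)})_*$ must be treated as a natural isomorphism of functors rather than a mere object-level identification, so that the assembly from generators is genuinely functorial; this is a formal consequence of the adjunction and is the main bookkeeping obstacle.
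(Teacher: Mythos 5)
Your proof is correct and follows essentially the same route as the paper: reduce to $\mathcal{F}=j_{x*}\underline{\mathbb{Z}}$ via Lemma~\ref{lem of resolution} and right exactness of both sides, then identify both sides with $\mathcal{G}(f(x))$ using Example~\ref{ex yoneda}. Your adjunction argument for $f_*\circ j_{x*}\cong j_{f(x)*}$ just makes explicit a step the paper leaves implicit.
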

\begin{proof}
	We firstly check the case of $\mathcal{F}=j_{x*}\underline{\mathbb{Z}}$. The discussion in Example \ref{ex yoneda} shows that $j_{x*}\underline{\mathbb{Z}}\otimes_P f^{*}\mathcal{G} = \mathcal{G}(f(x))$ and $f_{*}j_{x*}\underline{\mathbb{Z}}\otimes_Q\mathcal{G} = j_{f(x)*}\underline{\mathbb{Z}}\otimes_Q\mathcal{G}=\mathcal{G}(f(x))$, and these isomorphism are natural. Then the Equation \ref{eq fib tensor} holds for any $\mathcal{F}=j_{x*}\underline{\mathbb{Z}}$ and direct sum of them. For general $\mathcal{F}$, choose a resolution $\mathcal{F}_i\to \mathcal{F}$ in Lemma \ref{lem of resolution}, then these natural isomorphism $\mathcal{F}_i\otimes_S\pi^{*}\mathcal{G}\cong \pi_{*}\mathcal{F}_i\otimes_B\mathcal{G}$ yields a natural isomorphism $\mathcal{F}\otimes_S\pi^{*}\mathcal{G}\cong \pi_{*}\mathcal{F}\otimes_B\mathcal{G}$ since $\mathcal{F} \mapsto\mathcal{F}\otimes_P f^{*}\mathcal{G}$ and $\mathcal{F} \mapsto f_{*}\mathcal{F}\otimes_Q\mathcal{G}$ are both right exact.
\end{proof}

\begin{defn}
	Define $$\text{Tor}^P_n(\mathcal{F},\mathcal{G})= L_n(- \otimes_P \mathcal{G})(\mathcal{F})$$
	where $L_n(- \otimes_P \mathcal{G})$ is the $n$-th left derived functor of $- \otimes_P \mathcal{G}$.
\end{defn}

\textbf{Notation.} Write $H_n(P;\mathcal{F})=\text{Tor}^P_n(\mathcal{F},\overline{\mathbb{Z}})$ , i.e., the \textbf{homology} of poset $P$ with coefficient $\mathcal{F}$. It is the $n$-th left derived functor of $\text{colim}_{P^{op}}$.\\

\vskip .5em 
\subsection{Computation of Tor} ~
\vskip .5em 
 From now on, we make an additional restriction that \textbf{all copresheaf appears in this paper is free}  (we call $\mathcal{G}$ is free iff $\mathcal{G}(x)$ is free for any $x$).
The following lemma is a well known method to compute $\text{Tor}^P_n$.

\begin{lem}\label{compute homology}
	With the same assumption in last subsection, let $K_*(P,\mathcal{G};\mathcal{F})$ be a chain $$... \to \bigoplus_{p_0<p_1<p_2}\mathcal{G}(p_0)\otimes\mathcal{F}(p_2) \to \bigoplus_{p_0<p_1}\mathcal{G}(p_0)\otimes\mathcal{F}(p_1) \to \bigoplus_{p_0}\mathcal{G}(p_0)\otimes\mathcal{F}(p_0)$$
	The differential $\partial$ is given by example $$\partial(p_0<p_1<p_2,b\otimes c) = (p_1<p_2,b|^{p_1}\otimes c) - (p_0<p_2,b\otimes c) + (p_0<p_1,b\otimes  c|_{p_1})$$ and similarly in other degrees, $p_i\in P$, then $$\text{Tor}^P_n(\mathcal{F},\mathcal{G}) \cong H_n(K_*(P,\mathcal{G};\mathcal{F}))$$		
\end{lem}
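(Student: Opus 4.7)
The plan is to identify $K_*(P,\mathcal{G};\mathcal{F})$ with $\mathcal{B}_*\otimes_P\mathcal{G}$ for an explicit projective resolution $\mathcal{B}_*\to\mathcal{F}$ in $\text{PSh}(P)$, then extract the formula by applying Example \ref{ex yoneda} summandwise. Concretely, I would take the normalized bar resolution indexed by strict chains,
$$\mathcal{B}_n \;=\; \bigoplus_{p_0<p_1<\cdots<p_n}\delta_{P_{\leq p_0}}\mathcal{F}(p_n),$$
with differential $\sum_{i=0}^n(-1)^i d_i$: for $0<i<n$ the face $d_i$ simply omits $p_i$; $d_n$ omits $p_n$ and applies the presheaf restriction $\mathcal{F}(p_n)\to\mathcal{F}(p_{n-1})$ to the coefficient; and $d_0$ omits $p_0$ via the canonical inclusion $\delta_{P_{\leq p_0}}\mathcal{F}(p_n)\hookrightarrow\delta_{P_{\leq p_1}}\mathcal{F}(p_n)$ arising from $P_{\leq p_0}\subseteq P_{\leq p_1}$. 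The augmentation $\mathcal{B}_0\to\mathcal{F}$ sends the summand at $p_0$ into $\mathcal{F}$ by the adjunction unit, which at $q\leq p_0$ is the restriction $\mathcal{F}(p_0)\to\mathcal{F}(q)$.

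Two facts about $\mathcal{B}_*$ need to be verified. First, each $\mathcal{B}_n$ is projective in $\text{PSh}(P)$: after choosing a free abelian basis for each $\mathcal{F}(p_n)$ it becomes a direct sum of presheaves of the form $j_{p_0*}\underline{\mathbb{Z}}$, which are projective by Lemma \ref{lem of resolution}. Second, the augmented complex $\mathcal{B}_*\to\mathcal{F}\to 0$ is exact, which I would check stalkwise. At $q\in P$ the sequence reads
$$\cdots\to\bigoplus_{q\leq p_0<p_1}\mathcal{F}(p_1)\to\bigoplus_{q\leq p_0}\mathcal{F}(p_0)\to\mathcal{F}(q)\to 0,$$
which is a chain complex on the order complex of $P_{\geq q}$ with coefficients assembled from $\mathcal{F}$. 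Since $q$ is the minimum of $P_{\geq q}$, the order complex is contractible; concretely, the map $s$ prepending $q$ to a chain (set to zero when $q=p_0$, which is the reason for using strict inequalities) is a contracting homotopy. Verifying $ds+sd=\text{id}$ above the augmentation is the main technical step of the proof.

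Finally, applying $-\otimes_P\mathcal{G}$ to $\mathcal{B}_*$ and using the natural isomorphism of Example \ref{ex yoneda} summandwise gives
$$\mathcal{B}_n\otimes_P\mathcal{G}\;\cong\;\bigoplus_{p_0<\cdots<p_n}\mathcal{G}(p_0)\otimes\mathcal{F}(p_n),$$
matching the degree-$n$ piece of $K_*(P,\mathcal{G};\mathcal{F})$. Under this identification $d_i$ for $0<i<n$ still just deletes $p_i$; $d_n$ becomes the presheaf restriction $c\mapsto c|_{p_{n-1}}$ on the $\mathcal{F}$-factor, as in the lemma; and $d_0$, which on the resolution side was the presheaf inclusion $\delta_{P_{\leq p_0}}\hookrightarrow\delta_{P_{\leq p_1}}$, is translated by the naturality clause of Example \ref{ex yoneda} into the copresheaf extension $b\mapsto b|^{p_1}$ on the $\mathcal{G}$-factor — precisely the asymmetric first term of $\partial$ in the statement. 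This yields $\text{Tor}^P_n(\mathcal{F},\mathcal{G})=H_n(\mathcal{B}_*\otimes_P\mathcal{G})=H_n(K_*(P,\mathcal{G};\mathcal{F}))$. Besides the acyclicity argument above, the only care needed is sign bookkeeping so that the two asymmetric endpoints ($d_0$ producing the copresheaf extension, $d_n$ producing the presheaf restriction) appear with the correct signs; the symmetry between the two Yoneda-type identifications makes this routine once the resolution is set up.
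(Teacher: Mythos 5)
Your route is genuinely different from the paper's (the paper never writes down a resolution of $\mathcal{F}$: it shows $\mathcal{F}\mapsto H_n(K_*(P,\mathcal{G};\mathcal{F}))$ is a universal $\delta$-functor, checking vanishing on the projectives $j_{x*}\underline{\mathbb{Z}}$ via contractibility of the intervals $\varDelta[y,x]$, and identifies $H_0$ with $\mathcal{F}\otimes_P\mathcal{G}$), but there is one step in your argument that fails as written: the claim that each $\mathcal{B}_n=\bigoplus_{p_0<\cdots<p_n}\delta_{P_{\leq p_0}}\mathcal{F}(p_n)$ is projective. Since $\mathrm{Hom}_{\text{PSh}(P)}(j_{p_0*}\underline{A},\mathcal{E})\cong\mathrm{Hom}_{Ab}(A,\mathcal{E}(p_0))$, the presheaf $\delta_{P_{\leq p_0}}A=j_{p_0*}\underline{A}$ is projective precisely when $A$ is a free abelian group. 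The paper's standing convention makes only \emph{copresheaves} free; the presheaf $\mathcal{F}$ in the lemma is arbitrary (and in the paper's later dimension-shifting arguments it is replaced by kernels and quotients whose values need not be free), so ``choosing a free abelian basis for each $\mathcal{F}(p_n)$'' is not available and $\mathcal{B}_*$ is not in general a projective resolution. Without projectivity, $H_n(\mathcal{B}_*\otimes_P\mathcal{G})\cong\text{Tor}^P_n(\mathcal{F},\mathcal{G})$ does not follow from the definition of the derived functor.

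The gap is repairable, and the repair uses exactly the freeness of $\mathcal{G}$ that the paper's own proof also relies on: resolving $A$ by free abelian groups and applying $j_{p_0*}$ shows $\text{Tor}^P_n(j_{p_0*}\underline{A},\mathcal{G})\cong\text{Tor}^{\mathbb{Z}}_n(A,\mathcal{G}(p_0))$, which vanishes for $n>0$ because $\mathcal{G}(p_0)$ is free, hence flat. So your $\mathcal{B}_*$ is a left resolution of $\mathcal{F}$ by objects acyclic for $-\otimes_P\mathcal{G}$, and the standard acyclic-resolution theorem (not just the definition of left derived functors) lets it compute $\text{Tor}^P_*(\mathcal{F},\mathcal{G})$. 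With that substitution the rest of your argument is sound: the stalkwise contracting homotopy prepending $q$ is correct because $q$ is the minimum of $P_{\geq q}$, and the summandwise application of Example \ref{ex yoneda} (extended from $\underline{\mathbb{Z}}$ to $\underline{A}$) correctly converts $d_0$ into the copresheaf extension $b\mapsto b|^{p_1}$ and $d_n$ into the presheaf restriction, recovering $K_*(P,\mathcal{G};\mathcal{F})$. (Minor point: the augmentation $\mathcal{B}_0\to\mathcal{F}$ is the counit, not the unit, of the adjunction $f_*\dashv f^*$.) Once patched, your proof is more explicit than the paper's abstract $\delta$-functor argument, at the cost of importing the acyclic-resolution lemma.
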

\begin{proof}
	Recall that any copresheaf in this paper is assumed to be free, then
	a short exact sequence $0\to\mathcal{F}'\to \mathcal{F}\to \mathcal{F}''\to 0$ induce a short exact sequence of chain $0\to K_*(P,\mathcal{G};\mathcal{F}')\to K_*(P,\mathcal{G};\mathcal{F})\to K_*(P,\mathcal{G};\mathcal{F}'') \to 0$, then
	$\mathcal{F} \mapsto H_n(K_*(P,\mathcal{G};\mathcal{F}))$ is a $\delta$-functor between $\text{PSh}(P)$ and $Ab$.
	To prove it is universal, consider the functor $\mathcal{G} \mapsto H_n(K_*(P,\mathcal{G};j_{x*}\underline{\mathbb{Z}}))$. It's universal $\delta$-functor since $H_*(K_*(P,j_{y*}\overline{\mathbb{Z}};j_{x*}\underline{\mathbb{Z}}))$ equals homology of contractible $\varDelta[y,x]$ for any $y\leq x$, and $0$ otherwise . Then $H_n(K_*(P,\mathcal{G};j_{x*}\underline{\mathbb{Z}}))$ equals left derived functor of $\mathcal{G} \mapsto H_0(K_*(P,\mathcal{G};j_{x*}\underline{\mathbb{Z}})) = \mathcal{G}(x)$, which is an exact functor, so $H_n(K_*(P,\mathcal{G};j_{x*}\underline{\mathbb{Z}}))$ is trivial for positive $n$ and $\mathcal{F} \mapsto H_n(K_*(P,\mathcal{G};\mathcal{F}))$ is a universal $\delta$-functor. Notice that $$H_0(K_*(P,\mathcal{G};\mathcal{F})) \cong \mathcal{F} \otimes_P \mathcal{G}$$ so $H_n(K_*(P,\mathcal{G};\mathcal{F}))$ is natural isomorphic with $\text{Tor}^P_n(\mathcal{F},\mathcal{G})$.
	
\end{proof}


\begin{cor}
	$$H_n(\varDelta P)=\text{Tor}^P_n(\underline{\mathbb{Z}},\overline{\mathbb{Z}})=H_n(P;\underline{\mathbb{Z}})$$  
	$$H_n(\varDelta\hspace{-0.5em}\varDelta[x,y])=\text{Tor}^P_n(\delta_y\mathbb{Z},\delta^x\mathbb{Z})$$
\end{cor}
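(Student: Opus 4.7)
The plan is to derive both identities as straightforward specializations of Lemma \ref{compute homology}, after identifying the complex $K_*(P,\mathcal{G};\mathcal{F})$ with a familiar (relative) simplicial chain complex for each choice of supports.

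For the first identity $H_n(\varDelta P) = \text{Tor}^P_n(\underline{\mathbb{Z}},\overline{\mathbb{Z}}) = H_n(P;\underline{\mathbb{Z}})$, I would plug $\mathcal{F}=\underline{\mathbb{Z}}$ and $\mathcal{G}=\overline{\mathbb{Z}}$ into the Lemma. Each summand $\mathcal{G}(p_0)\otimes\mathcal{F}(p_n)$ of $K_n$ becomes a copy of $\mathbb{Z}$ indexed by the chain $p_0<\cdots<p_n$, and because every restriction and extension map is the identity, the Lemma's differential collapses to the standard alternating-sum simplicial boundary. Thus $K_*$ is literally the simplicial chain complex of $\varDelta P$, so its homology is $H_n(\varDelta P)$; the second equality is then immediate from the notational convention $H_n(P;\mathcal{F})=\text{Tor}^P_n(\mathcal{F},\overline{\mathbb{Z}})$ introduced just after the definition of $\text{Tor}^P_n$.

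For the second identity I would take $\mathcal{F}=\delta_y\mathbb{Z}$ and $\mathcal{G}=\delta^x\mathbb{Z}$. A summand of $K_n$ indexed by $p_0<\cdots<p_n$ is nonzero precisely when $p_0=x$ and $p_n=y$, so the surviving chains are exactly those of the form $x=p_0<\cdots<p_n=y$. These correspond to the $n$-simplices of $\varDelta[x,y]$ that contain both endpoints, which are the generators of the relative chain complex of the pair $\varDelta\hspace{-0.5em}\varDelta[x,y]=(\varDelta[x,y],\,\varDelta(x,y]\cup\varDelta[x,y))$. In the Lemma's differential, the two endpoint face terms vanish automatically: extending $\delta^x\mathbb{Z}$ from $x$ to any $p_1>x$ gives $0$, and restricting $\delta_y\mathbb{Z}$ from $y$ to any $p_{n-1}<y$ also gives $0$; the interior faces carry the usual alternating signs. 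Hence $K_*(P,\delta^x\mathbb{Z};\delta_y\mathbb{Z})$ is exactly the relative simplicial chain complex of $\varDelta\hspace{-0.5em}\varDelta[x,y]$, and Lemma \ref{compute homology} then yields the identification with $\text{Tor}^P_n(\delta_y\mathbb{Z},\delta^x\mathbb{Z})$.

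I do not anticipate a genuine obstacle here, since the whole argument is bookkeeping about where the supports $\delta^x$, $\delta_y$ place nonzero summands and how the Lemma's differential acts on them. The one point worth a brief sanity check is the degenerate case $x=y$ of the second identity, where only the length-one chain $x=y$ contributes, both sides collapse to $\mathbb{Z}$ concentrated in degree $0$, and $\varDelta\hspace{-0.5em}\varDelta[x,x]$ is a single point relative to the empty set.
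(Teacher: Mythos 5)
Your proposal is correct and follows exactly the route the paper intends: the corollary is stated as an immediate specialization of Lemma \ref{compute homology}, and your bookkeeping (identifying $K_*(P,\overline{\mathbb{Z}};\underline{\mathbb{Z}})$ with the simplicial chains of $\varDelta P$, and $K_*(P,\delta^x\mathbb{Z};\delta_y\mathbb{Z})$ with the relative chains of the pair $\varDelta\hspace{-0.5em}\varDelta[x,y]$, noting the endpoint faces die because the extension out of $x$ and the restriction below $y$ are zero) is precisely the verification the paper leaves implicit.
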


\subsection{$f$-homomorphism}~\\

In this subsection, let $f: P\to Q$ be morphism of poset. $\mathcal{F},\mathcal{E}$  be presheaves on $P,Q$, $\mathcal{G},\mathcal{H}$  be copresheaves on $P,Q$.
\begin{defn}
	$f$-homomorphism $k : \mathcal{F} \rightsquigarrow \mathcal{E}$ is a collection of homomorphisms $k_x : \mathcal{F}(x) \to \mathcal{E}(f(x))$, $x\in P$, compatible with restrictions, we denote $f\text{-Hom}(\mathcal{F},\mathcal{E})$ the group of all $f$-homomorphism from $\mathcal{F}$ to $\mathcal{E}$, there are obvious natural isomorphisms $f\text{-Hom}(\mathcal{F},\mathcal{E})\cong \text{Hom}_{\text{PSh}(P)}(\mathcal{F},f^{*}\mathcal{E})\cong\text{Hom}_{\text{PSh}(Q)}(f_*\mathcal{F},\mathcal{E})$.
	
	The definition of $f$-homomorphism of copresheaf is similar.
\end{defn}

\begin{example}[canonical $f$-homomorphism]\label{can-f}
    The canonical $f$-homomorphism $ f^*\mathcal{E} \rightsquigarrow \mathcal{E}$ is given by identity map of $\mathcal{E}(f(x))$ for any $x\in P$. In this paper, we conventionally let any arrow looks like $ f^*\mathcal{E} \rightsquigarrow \mathcal{E}$ to denote canonical $f$-homomorphism. It is easy to see that any $f$-homomorphism $k : \mathcal{F} \rightsquigarrow \mathcal{E}$ admits a unique factorization $k : \mathcal{F} \xrightarrow{j}  f^*\mathcal{E} \rightsquigarrow \mathcal{E}$ where $j$ is a morphism in $\text{PSh}(P)$. The definition of canonical $f$-homomorphism of copresheaves is similar.
\end{example}

\begin{example}[special $f$-homomorphism $\star$]\label{ex of star}
	In this example, let $y \in Q$ and $x \in P$ be a minimal element in $f^{-1}y$, then we define a $f$-homomorphism $\star : \delta_x\mathbb{Z} \rightsquigarrow \delta_y\mathbb{Z} $ to be identity map of $\mathbb{Z}$ on $x$ and be $0$ otherwise. It's easy to check $\star$ is a well defined $f$-homomorphism since it is compatible with restriction map (notice that $\star$ is not compatible with restriction map if $x$ is not a minimal element of $f^{-1}y$). If $x$ is a maximal element of $f^{-1}y$, we can define a similar $f$-homomorphism of copresheaf $\star : \delta^x\mathbb{Z} \rightsquigarrow \delta^y\mathbb{Z} $. These $f$-homomorphisms $\star$ will be used later.
	
\end{example}

\begin{example}[pull back along a diagram]\label{ex of pull back by diagram}
	Let $$\xymatrix{
		P' \ar[r]^{f'}\ar[d]^{\pi} & Q' \ar[d]^{\pi'}\\
		P \ar[r]^f & Q
	}$$
be a commutative diagram of poset, $k : \mathcal{F} \rightsquigarrow \mathcal{E}$ be a $f$-homomorphism of presheaves, define a $f'$-homomorphism $\pi^*k : \pi^*\mathcal{F} \rightsquigarrow \pi'^*\mathcal{E}$ by $(\pi^*k)_x = k_{\pi(x)}$ for any $x\in P'$. The pull back of $f$-homomorphism of copresheaves is similar.
\end{example}

\begin{thm}\label{f-hom}
	Any $f$-homomorphism of presheaf $k : \mathcal{F} \rightsquigarrow \mathcal{E}$ and $f$-homomorphism of copresheaf $t: \mathcal{G} \rightsquigarrow \mathcal{H}$ induce natural maps $\text{Tor}^f_n(k,t): \text{Tor}^P_n(\mathcal{F},\mathcal{G}) \to \text{Tor}^Q_n(\mathcal{E},\mathcal{H})$ uniquely determined by following property
	\begin{itemize}
		\item $\text{Tor}^f_0(k,t)$ equals the map $\mathcal{F} \otimes_P \mathcal{G} \to \mathcal{E} \otimes_Q \mathcal{H}$  induced by $k$ and $t$.
		\item a commutative diagram of short exact sequences $$\xymatrix{
			0\ar[r] & \mathcal{F}' \ar[r] \ar@{~>}[d]^s & \mathcal{F} \ar[r] \ar@{~>}[d] & \mathcal{F}'' \ar[r] \ar@{~>}[d]^k & 0\\
			0\ar[r] & \mathcal{E}' \ar[r]  & \mathcal{E} \ar[r]  & \mathcal{E}'' \ar[r] & 0
		}$$
		(the columns are $f$-homomorphisms) induce a commutative diagram $$\xymatrix{
			\text{Tor}^P_n(\mathcal{F}'',\mathcal{G})  \ar[r]^\delta \ar[d]^{\text{Tor}^f_n(k,t)} & \text{Tor}^P_{n-1}(\mathcal{F}',\mathcal{G}) \ar[d]^{\text{Tor}^f_{n-1}(s,t)}\\
			\text{Tor}^Q_n(\mathcal{E}'',\mathcal{H}) \ar[r]^\delta & \text{Tor}^Q_{n-1}(\mathcal{E}',\mathcal{H})
		}$$
	\end{itemize}
\end{thm}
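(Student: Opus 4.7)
The plan is to build $\text{Tor}^f_n(k,t)$ at the chain level using the explicit model $K_*(P,\mathcal{G};\mathcal{F})$ of Lemma \ref{compute homology}, then verify the two characterizing properties and deduce uniqueness. Concretely, I would define a map $\Phi : K_*(P,\mathcal{G};\mathcal{F}) \to K_*(Q,\mathcal{H};\mathcal{E})$ on generators by sending $(p_0<p_1<\cdots<p_n,\,b\otimes c)$, with $b\in\mathcal{G}(p_0)$ and $c\in\mathcal{F}(p_n)$, to $(f(p_0)<\cdots<f(p_n),\,t_{p_0}(b)\otimes k_{p_n}(c))$ when the images $f(p_i)$ are strictly increasing, and to $0$ otherwise. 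The $f$-homomorphism property guarantees the values land in the correct coefficient groups, namely $t_{p_0}(b)\in\mathcal{H}(f(p_0))$ and $k_{p_n}(c)\in\mathcal{E}(f(p_n))$.

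The substantive step, and in my view the only real obstacle, is checking that $\Phi$ commutes with $\partial$. On chains whose $f$-images remain strict this is routine, as the alternating-sum boundary in the source is taken directly to the alternating-sum boundary in the target. The delicate case is an edge $p_i<p_{i+1}$ with $f(p_i)=f(p_{i+1})=:q$: the ambient chain goes to $0$, but two of its boundary faces (obtained by deleting $p_i$ and $p_{i+1}$) yield the same non-degenerate chain in $Q$ and must therefore cancel. This cancellation rests on the defining compatibility of $f$-homomorphisms with restriction and extension: since the edge $q\leq q$ is trivial, one has $t_{p_{i+1}}(b|^{p_{i+1}})=t_{p_i}(b)|^q=t_{p_i}(b)$ and symmetrically $k_{p_i}(c|_{p_i})=k_{p_{i+1}}(c)|_q=k_{p_{i+1}}(c)$, so the two boundary contributions are equal while their signs $(-1)^i$ and $(-1)^{i+1}$ are opposite. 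Faces away from a collapsed edge are manifestly preserved, so one obtains cancellation of every degenerate contribution—the standard simplicial argument—and $\Phi$ is a chain map.

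Once $\Phi$ is established, $\text{Tor}^f_n(k,t)$ is defined as the map induced on homology. Property 1 is immediate by inspection: in degree $0$ the map is $b\otimes c\mapsto t_p(b)\otimes k_p(c)$ and descends, under $H_0(K_*)\cong \mathcal{F}\otimes_P\mathcal{G}$, to the canonical tensor-product map $\mathcal{F}\otimes_P\mathcal{G}\to\mathcal{E}\otimes_Q\mathcal{H}$. Property 2 follows from the evident naturality of $\Phi$ in $\mathcal{F}$ and $\mathcal{E}$: a commutative diagram of short exact sequences of presheaves linked by $f$-homomorphisms produces a commutative diagram of short exact sequences of chain complexes after applying $\Phi$, and the snake lemma supplies the required square for the connecting morphisms $\delta$. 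For uniqueness, a standard dimension-shifting argument applies: given $\mathcal{F}$, choose a surjection $\mathcal{F}_0\twoheadrightarrow\mathcal{F}$ from a direct sum of projectives $j_{x*}\underline{\mathbb{Z}}$ provided by Lemma \ref{lem of resolution}, construct a compatible surjection $\mathcal{E}_0\twoheadrightarrow\mathcal{E}$ by pushing out along the adjoint morphism $f_*\mathcal{F}_0\to\mathcal{E}$, and iterate property 2 to reduce $\text{Tor}^f_n$ on $(\mathcal{F},\mathcal{E})$ to $\text{Tor}^f_0$ on successive kernels, where property 1 pins the map down uniquely.
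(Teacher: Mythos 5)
Your proposal is correct and follows essentially the same route as the paper: existence via the chain-level map on $K_*(P,\mathcal{G};\mathcal{F})$ that sends chains whose $f$-image is not strictly increasing to zero (you spell out the degenerate-face cancellation that the paper dismisses as easy to check), and uniqueness via dimension shifting, made possible because $f_*$ and the adjunction provide projective objects over $Q$ compatible with $k$. The only cosmetic point is your phrase ``pushing out along $f_*\mathcal{F}_0\to\mathcal{E}$'': for the bottom connecting map to be injective the middle term on the $Q$-side must be Tor-acyclic, so one should take, e.g., $\mathcal{E}_0=f_*\mathcal{F}_0\oplus Q_0$ with $Q_0$ projective surjecting onto $\mathcal{E}$ (the paper instead lifts $k$ along full projective resolutions of $\mathcal{F}$ and $\mathcal{E}$ using the comparison theorem), after which your iteration of the two defining properties pins the maps down exactly as in the paper.
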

\begin{proof}
	For the existence of $\text{Tor}^f_n(k,t)$, let $(k,t)_\#: K_*(P,\mathcal{G};\mathcal{F}) \to K_*(S,\mathcal{H};\mathcal{E})$ be the map given by example $(k,t)_\#(p_0<p_1<p_2,b\otimes c)=(f(p_0)< f(p_1)<f(p_2),t(b)\otimes k(c))$ for distinct $f(p_i)$ and zero otherwise. Let $\text{Tor}^f_*(k,t)$ be the  map of homology induced by $(k,t)_\#$, it's easy to check this map $\text{Tor}^f_*(k,t)$ satisfying the required property . 
	
	The uniqueness of $\text{Tor}^f_*(k,t)$ is proven by a standard "dimension shifting" process. Let the rows in following diagram $$\xymatrix{
		...\ar[r] & \mathcal{F}_2 \ar[r]^{d_2} \ar@{~>}[d]^{\gamma^2} & \mathcal{F}_1 \ar[r]^{d_1} \ar@{~>}[d]^{\gamma^1} & \mathcal{F}_0 \ar[r]^{d_0} \ar@{~>}[d]^{\gamma^0} & \mathcal{F}_{-1}= \mathcal{F} \ar@{~>}[d]^{k}\\
		...\ar[r] & \mathcal{E}_2 \ar[r]^{d_2} & \mathcal{E}_1 \ar[r]^{d_1} & \mathcal{E}_0 \ar[r]^{d_0}  & \mathcal{E}_{-1}= \mathcal{E}
	}$$
	be projective resolution of $\mathcal{F},\mathcal{E}$ respectively, and the map in every column is the $f$-homomorphism extending $k$ (we can always do this by "push" $\mathcal{F}_i$ to $\text{PSh}(Q)$, i.e., $f_*(\mathcal{F}_i)$ is resolution of $f_*(\mathcal{F})$, then there exist morphisms in $\text{Hom}(f_*\mathcal{F}_i,\mathcal{E}_i)$ extending $k$ by \cite[Theorem 2.2.6]{Weibel1994}) . Let $M_{i+1} = \ker(\mathcal{F}_i\xrightarrow{d_i} \mathcal{F}_{i-1})$ and $M'_{i+1} = \ker(\mathcal{E}_i\xrightarrow{d_i} \mathcal{E}_{i-1})$ for $i\geq 0$ and $M_{0}=\mathcal{F},M'_{0}=\mathcal{E}$, we have diagram of short exact sequence $$\xymatrix{
		0\ar[r] & M_{i+1} \ar[r] \ar@{~>}[d]^{\gamma^{i}} & \mathcal{F}_i \ar[r] \ar@{~>}[d]^{\gamma^i} & M_{i} \ar[r] \ar@{~>}[d]^{\gamma^{i-1}} & 0 \\
		0\ar[r] & M'_{i+1} \ar[r]  & \mathcal{E}_i \ar[r]  & M'_{i} \ar[r]  & 0
	}$$ induce following commutative diagram $$\xymatrix{
	\text{Tor}^P_n(\mathcal{F},\mathcal{G})\ar[r]_-\cong^-\delta \ar[d]^{\text{Tor}^f_n(k,t)} & \text{Tor}^P_n(M_{1},\mathcal{G}) \ar[r]_-{\cong}^-{\delta} \ar[d]^{\text{Tor}^f_{n-1}(\gamma^0,t)} &... \ar[r]_-{\cong}^-{\delta} & \text{Tor}^P_1(M_{n-1},\mathcal{G}) \ar@{^{(}->}[r]^-\delta \ar[d]^{\text{Tor}^f_{1}(\gamma^{n-2},t)} & M_{n} \otimes_P\mathcal{G} \ar[d]^{\text{Tor}^f_{0}(\gamma^{n-1},t)} \\
	\text{Tor}^Q_n(\mathcal{E},\mathcal{H})\ar[r]_-\cong^-\delta  & \text{Tor}^Q_n(M'_{1},,\mathcal{H}) \ar[r]_-{\cong}^-{\delta}  &... \ar[r]_-{\cong}^-{\delta} & \text{Tor}^Q_1(M'_{n-1},\mathcal{H}) \ar@{^{(}->}[r]^-\delta  & M'_{n}\otimes_Q\mathcal{H}
}$$ thus $\text{Tor}^f_n(k,t):\text{Tor}^P_n(\mathcal{F},\mathcal{G}) \to \text{Tor}^Q_n(\mathcal{E},\mathcal{H})$ is uniquely determined by $\text{Tor}^f_{0}(\gamma^{n-1},t) : M_{n} \otimes_P\mathcal{G} \to M'_{n}\otimes_Q\mathcal{H}$
\end{proof}

\textbf{Notation.} If above $k,t$ is canonical $f$-homomorphism (see Example \ref{can-f}), we always abbreviate $\text{Tor}^f_n(k,t)$ as $\text{Tor}^f_n$.

\begin{lem}\label{lem of pull back}
	Let $P,Q$ be join semilattice, $f: P \to Q$ be a \textbf{surjective} map preserve order and join operation, $\mathcal{F}$ be a presheaf on $Q$ and $\mathcal{G}$ be a copresheaf on $Q$ ,  then canonical $f$-homomorphism $f^*\mathcal{F} \rightsquigarrow \mathcal{F}$ and $f^*\mathcal{G} \rightsquigarrow \mathcal{G}$ induce isomorphism $$\text{Tor}^f_* : \text{Tor}^P_*(f^*\mathcal{F},f^*\mathcal{G}) \xrightarrow{\cong} \text{Tor}^Q_*( \mathcal{F},\mathcal{G})$$
	
\end{lem}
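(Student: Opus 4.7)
The plan is to reduce the assertion to the representable case $\mathcal{F}=j_{y*}\underline{\mathbb{Z}}$ via a carefully chosen projective resolution, exploiting the hypothesis on $f$ through the geometric fact that each fiber $f^{-1}(y)$ admits a maximum element $\tilde{y}\in P$, with $f^{-1}(Q_{\leq y})=P_{\leq\tilde{y}}$.

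First I would establish this structural fact. Since $f$ preserves joins, $x_{1},x_{2}\in f^{-1}(y)$ implies $f(x_{1}\vee x_{2})=y$, so $f^{-1}(y)$ is closed under binary joins; in the finite setting relevant to this paper, taking $\tilde{y}=\bigvee f^{-1}(y)$ produces a top element. Next, for any $p\in P$ with $f(p)\leq y$, the element $p\vee\tilde{y}$ has image $f(p)\vee y=y$, so lies in $f^{-1}(y)$ and is therefore $\leq\tilde{y}$, forcing $p\leq\tilde{y}$. The converse inclusion being automatic from order-preservation, $\{p:f(p)\leq y\}=P_{\leq\tilde{y}}$. This immediately yields $f^{*}j_{y*}\underline{\mathbb{Z}}=j_{\tilde{y}*}\underline{\mathbb{Z}}$, which is projective in $\text{PSh}(P)$ by Lemma \ref{lem of resolution}. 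One also checks the order-reflection: $y\leq y'$ in $Q$ if and only if $\tilde{y}\leq\tilde{y}'$ in $P$.

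Next, by Lemma \ref{lem of resolution} I would choose a projective resolution $\mathcal{F}_{\bullet}\to\mathcal{F}$ in $\text{PSh}(Q)$ whose terms are direct sums of presheaves $j_{y*}\underline{\mathbb{Z}}$. Because $f^{*}$ is exact, $f^{*}\mathcal{F}_{\bullet}\to f^{*}\mathcal{F}$ is a resolution, and by the previous paragraph its terms are direct sums of projectives $j_{\tilde{y}*}\underline{\mathbb{Z}}$, so it is itself a projective resolution. Computing both Tor groups via these resolutions, Example \ref{ex yoneda} gives
\begin{equation*}
j_{\tilde{y}*}\underline{\mathbb{Z}}\otimes_{P}f^{*}\mathcal{G}=f^{*}\mathcal{G}(\tilde{y})=\mathcal{G}(y)=j_{y*}\underline{\mathbb{Z}}\otimes_{Q}\mathcal{G},
\end{equation*}
and the induced maps along an inclusion $j_{y*}\underline{\mathbb{Z}}\hookrightarrow j_{y'*}\underline{\mathbb{Z}}$ and its $f^{*}$-pullback are both the extension $\mathcal{G}(y)\to\mathcal{G}(y')$. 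Hence the two chain complexes coincide summand-by-summand and differential-by-differential, so their homologies agree.

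Finally, I would verify that this chain-level identification is the canonical $\text{Tor}^{f}_{*}$. On $\text{Tor}_{0}$ the map is the composite $f^{*}\mathcal{F}\otimes_{P}f^{*}\mathcal{G}\cong f_{*}f^{*}\mathcal{F}\otimes_{Q}\mathcal{G}\to\mathcal{F}\otimes_{Q}\mathcal{G}$, using equation \eqref{eq fib tensor} together with the counit $f_{*}f^{*}\mathcal{F}\to\mathcal{F}$, and this is visibly the map induced by the canonical $f$-homomorphisms; the uniqueness clause of Theorem \ref{f-hom} then forces agreement in all positive degrees. The chief obstacle is the structural fact of the second paragraph: it rests on each fiber having a maximum, for which surjectivity, join-preservation, and finiteness of $P$ are the natural sufficient conditions. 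Once granted, the remainder is a routine \emph{projective resolution plus adjointness} manipulation.
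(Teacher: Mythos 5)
Your proposal is correct and follows essentially the same route as the paper: establish that each fiber has a maximum $\tilde{y}$ so that $f^{*}j_{y*}\underline{\mathbb{Z}}=j_{\tilde{y}*}\underline{\mathbb{Z}}$ is projective, pull back a resolution by the exact functor $f^{*}$, and identify the two tensor complexes termwise via Example \ref{ex yoneda}. You in fact supply details the paper leaves implicit (the verification $f^{-1}(Q_{\leq y})=P_{\leq\tilde{y}}$ and the identification of the resulting isomorphism with the canonical $\text{Tor}^{f}_{*}$ via the uniqueness clause of Theorem \ref{f-hom}), so nothing further is needed.
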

\begin{proof}
	
	For any $x \in Q$, $f^{-1}(x)$ is not empty and have a maximum element $t=\bigvee_{y\in f^{-1}(x)}y$ since $f$ is surjective and preserve join, then $f^*j_{x*}\underline{\mathbb{Z}} = j_{t*}\underline{\mathbb{Z}}$ is also a projective presheaf. Choose a projective resolution $\mathcal{F}_i\to \mathcal{F}$ as in Lemma \ref{lem of resolution}, then $ f^*\mathcal{F}_i\to  f^*\mathcal{F}$ is also a projective resolution of $f^*\mathcal{F}$ since $f^*$ is exact. 
	
	Canonical $f$-homomorphism $f^*\mathcal{F}_i \rightsquigarrow \mathcal{F}_i$ and $f^*\mathcal{G} \rightsquigarrow \mathcal{G}$ induce isomorphism between chain $f^*\mathcal{F}_i\otimes_P f^*\mathcal{G}$ and chain $ \mathcal{F}_i \otimes_Q \mathcal{G}$ by Example \ref{ex yoneda},  and then a isomorphism of homology of these two chain $\text{Tor}^P_*(f^*\mathcal{F},f^*\mathcal{G}) \cong \text{Tor}^Q_*(\mathcal{F},\mathcal{G})$.
	
\end{proof}

\subsection{Cross product}~\\

We review the definition of shuffles and the cross product.

\begin{defn}[Shuffles]
	Let $p,q$ be nonnegative integers. A $(p,q)$-shuffle is a strictly increasing map of posets $ \sigma: [p+q]\to [p]\times[q] $ (here $[i]:=\{x\in \mathbb{Z}|0\leq x \leq i\}$ ), let $\sigma_{-},\sigma_{+}$ be the first and second component of $\sigma$, i.e., $\sigma(i) = (\sigma_{-}(i),\sigma_{+}(i))$, $$I_{-}= \{i\in [p+q]|i>0,\sigma_{-}(i-1)<\sigma_{-}(i)\}$$ $$I_{+}= \{i\in [p+q]|i>0,\sigma_{+}(i-1)<\sigma_{-}(i)\}$$
	$$(-1)^\sigma = \prod_{i\in I_{-},j\in I_{+},i>j} (-1)$$
\end{defn}

There is a well known cross product $$C_i(\varDelta P)\otimes C_j(\varDelta Q) \xrightarrow{\times} C_{i+j}(\varDelta(P\times Q))$$ of order complex given by \begin{align*}
	&(p_0<p_1<...<p_i) \otimes (q_0<q_1<...<p_j) \\
	\mapsto&\sum_{\sigma\text{ is }(i,j)\text{-shuffle}}(-1)^\sigma ((p_{\sigma_{-}(0)},q_{\sigma_{+}(0)}) < (p_{\sigma_{-}(1)},q_{\sigma_{+}(1)})<...<(p_{\sigma_{-}(i+j)},q_{\sigma_{+}(i+j)}) )
\end{align*}

This cross product of chain induce a cross product of homology $$\text{Tor}_i^P(\underline{\mathbb{Z}},\overline{\mathbb{Z}})\otimes \text{Tor}_j^Q(\underline{\mathbb{Z}},\overline{\mathbb{Z}}) \xrightarrow{\times} \text{Tor}_{i+j}^{P\times Q}(\underline{\mathbb{Z}},\overline{\mathbb{Z}})$$
or $$\text{Tor}_i^P(\delta_x\mathbb{Z},\delta^y\mathbb{Z})\otimes \text{Tor}_j^Q(\delta_z\mathbb{Z},\delta^w\mathbb{Z}) \xrightarrow{\times} \text{Tor}_{i+j}^{P\times Q}(\delta_{(x,z)}\mathbb{Z},\delta^{(y,w)}\mathbb{Z} )$$
for any $y\leq x \in P,w\leq z\in Q$. It is the simplicial cross product of order complex or relative simplicial cross product of some pair of order complex.


\subsection{Known results about cohomology ring of complement of subspace arrangements}
 Let $\mathcal{A}$ be a complex subspace arrangements in complex linear space $M$, $\mathcal{P}$ be its intersection lattice (all possible intersection of elements in $\mathcal{A}$ with reverse order of inclusion, $M$ be its minimum element), $\mathcal{M}(\mathcal{A})$ be complement of $\mathcal{A}$, $\vee :  \mathcal{P}\times\mathcal{P} \to \mathcal{P}$ denote the join operator of $\mathcal{P}$.
 
 A pair $x,y \in \mathcal{P}$ satisfying the codimension condition if \begin{equation}
 	\text{codim}(x)+\text{codim}(y) = \text{codim}(x\vee y)
 \end{equation}
or equivalently, $x + y = M$, where $\text{codim}$ denote the codimension of \textbf{complex subspace}. In this case, check that $(x,y)$ is minimal element in $\vee^{-1}(x\vee y)$ by dimension reason, then we have a $\vee$-homomorphism of presheaves $\star : \delta_{(x,y)}\mathbb{Z} \to \delta_{x\vee y} \mathbb{Z}$ (see Example \ref{ex of star} for definition of $\star$). $\vee^{-1}M$ have only one element $(M,M)$, we have a $\vee$-homomorphism of copresheaves $\star : \delta^{(M,M)}\mathbb{Z} \to \delta^M \mathbb{Z}$.

With above notations, we rewrite de Longueville and Schultz's result as following
\begin{thm}[see \cite{DeLongueville2001}]\label{ssa}
	
	$$H^i(\mathcal{M}(\mathcal{A}))\cong \bigoplus_{x\in \mathcal{P} }\text{Tor}^{\mathcal{P}}_{2\cdot\text{codim}(x)-i}(\delta_x \mathbb{Z},\delta^{M}\mathbb{Z})$$	
	and the cup product maps $\text{Tor}^{\mathcal{P}}_{i}(\delta_x \mathbb{Z},\delta^{M}\mathbb{Z}) \otimes \text{Tor}^{\mathcal{P}}_{j}(\delta_y \mathbb{Z},\delta^{M}\mathbb{Z})$ to $\text{Tor}^{\mathcal{P}}_{i+j}(\delta_{x\vee y} \mathbb{Z},\delta^{M}\mathbb{Z})$, satisfying
	$$ a\cup b =\begin{cases}
		\text{Tor}_{i+j}^\vee(\star,\star)(a\times b)  & \text{ if } x+y=M\\
		0 & \text{ otherwise }	
	\end{cases}$$
where $\times$ is the cross product and $\text{Tor}_{i+j}^\vee(\star,\star)$ is the map induced by $\vee$-homomorphism $\star : \delta_{(x,y)}\mathbb{Z} \to \delta_{x\vee y} \mathbb{Z}$ and $\star : \delta^{(M,M)}\mathbb{Z} \to \delta^M \mathbb{Z}$.	
\end{thm}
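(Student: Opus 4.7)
I would prove the additive and multiplicative parts separately, using Alexander duality / Goresky--MacPherson for the former and a diagonal-map argument together with the transversality structure of $\mathcal{P}$ for the latter.

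For the additive decomposition, I would start from the Goresky--MacPherson formula, which identifies $\tilde H^i(\mathcal{M}(\mathcal{A}))$ with a direct sum over $x\in\mathcal{P}\setminus\{M\}$ of reduced (co)homology of the open order complex $\varDelta(M,x)$, shifted by the real codimension of the stratum $V_x$. Translating into the language of this paper, the corollary already recorded states $H_n(\varDelta\hspace{-0.5em}\varDelta[M,x])=\text{Tor}^{\mathcal{P}}_n(\delta_x\mathbb{Z},\delta^M\mathbb{Z})$; and the relative homology of the pair $\varDelta\hspace{-0.5em}\varDelta[M,x]$ agrees, up to a degree shift by one, with the reduced homology of $\varDelta(M,x)$. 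The Thom-class shift $2\cdot\text{codim}(x)$ from Goresky--MacPherson then accounts for the index $2\cdot\text{codim}(x)-i$ in the statement. I would perform this step either via stratified Morse theory or equivalently through combinatorial Alexander duality on a simplicial model of the arrangement, whichever is more compatible with the chain complex $K_*$ from Lemma \ref{compute homology}.

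For the multiplicative part, I would realise the cup product through the diagonal $\Delta:\mathcal{M}(\mathcal{A})\hookrightarrow \mathcal{M}(\mathcal{A})\times\mathcal{M}(\mathcal{A})$. The product space is itself the complement of the product arrangement $\mathcal{A}\boxplus\mathcal{A}$ in $M\times M$, whose intersection lattice is $\mathcal{P}\times\mathcal{P}$ with componentwise join, and a K\"unneth-type argument combined with the additive statement expresses $H^*(\mathcal{M}\times\mathcal{M})$ through the shuffle cross product on Tor. Geometrically, the diagonal corresponds to the lattice map $\vee:\mathcal{P}\times\mathcal{P}\to\mathcal{P}$, because the stratum $V_x\times V_y\subset M\times M$ meets the diagonal along $V_x\cap V_y=V_{x\vee y}$. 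I would then argue that $\Delta^*$ is represented on the Tor side by the functorial map $\text{Tor}^\vee_*(\star,\star)$, precisely in the regime where the $\star$-homomorphisms are defined, i.e.\ when $(x,y)$ is a minimal element of $\vee^{-1}(x\vee y)$; dimension counting shows this is equivalent to the codimension condition $x+y=M$. In the non-transverse case $x+y\neq M$, the strict codimension drop forces $\Delta$ to meet $V_{x\vee y}$ with excess dimension, so the relevant Thom class pulls back to zero and the cup product vanishes.

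The main obstacle is the identification $\Delta^*=\text{Tor}^\vee_*(\star,\star)$ together with the correct signs. The cleanest route I see is to compute both sides on the same chain model: using Lemma \ref{compute homology}, I would take the explicit complex $K_*(\mathcal{P}\times\mathcal{P},\delta^{(M,M)}\mathbb{Z};\delta_{(x,y)}\mathbb{Z})$, push it forward along $\vee$ via the map $(\star,\star)_\#$ constructed in Theorem \ref{f-hom}, and match the result with a geometric cellular model of the diagonal --- for instance the Bj\"orner--Ziegler/De Longueville simplicial resolution of $\bigcup V_i$ --- in which Thom-class orientations of complex strata are read off combinatorially. The shuffle signs $(-1)^\sigma$ in the cross-product definition must then be reconciled with the orientation signs inherited from complex structure on the $V_x$; once a consistent orientation convention is fixed, naturality of Tor (Theorem \ref{f-hom}) upgrades the identification from $\text{Tor}_0$ to all degrees by dimension shifting, and the remaining claims (degree count and vanishing for $x+y\neq M$) become formal.
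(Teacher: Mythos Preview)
The paper does not prove this theorem at all: it is stated with the attribution ``see \cite{DeLongueville2001}'' and no proof environment follows. It is quoted as background from de Longueville--Schultz, reformulated in the paper's Tor/$f$-homomorphism language, and then \emph{used} in Section 5 rather than established. So there is nothing in the paper to compare your argument against.

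That said, your outline is essentially the strategy of the original de Longueville--Schultz paper you are citing: Goresky--MacPherson for the additive splitting, and a diagonal/product-arrangement argument for the multiplicative structure, with the codimension condition $x+y=M$ arising from transversality of strata. The translation step you flag --- identifying $\tilde H_*(\varDelta(M,x))$ with $\text{Tor}^{\mathcal{P}}_*(\delta_x\mathbb{Z},\delta^M\mathbb{Z})$ via the relative pair $\varDelta\hspace{-0.5em}\varDelta[M,x]$ --- is exactly the dictionary the present paper sets up in Section 2 (see the Corollary after Lemma \ref{compute homology}), so that part is already handled. The genuinely delicate point in your plan is the one you identify yourself: matching $\Delta^*$ with $\text{Tor}^\vee_*(\star,\star)$ including signs. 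De Longueville--Schultz do this via an explicit combinatorial/simplicial model of the arrangement and its one-point compactification (their ``combinatorial Alexander duality'' framework), which is more concrete than the dimension-shifting argument you sketch; if you were to write this out in full you would likely need to reproduce a fair amount of their chain-level construction rather than rely on abstract naturality alone.
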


\section{Fibration of poset and spectral sequence}
The following definition is a special case of Grothendieck fibration of category (or fibred category), also see \cite{Kecerdasan1998} for definition about fibration of category.
\begin{defn}\label{fibposet}
	Let $B$ be a poset, $\Psi$ be a functor from $B^{op}$ to category of poset. Let $S$ be disjoint union $\sqcup_{x\in B} \Psi(x)$, define a partial order on $S$ such that $$ \Psi(x)\ni a \leq b \in \Psi(y) \text{ iff } x\leq y \text{ and }  a \leq \Psi(x\leq y)(b) \text{ in fiber } \Psi(x)$$ We call the projection $\pi :S\to B$ (maps any elements in $\Psi(x)$ to $x$) be a fibration of poset.
\end{defn}

The following lemma is also a special case of a known result about fibration of category.

\begin{lem}\label{lem of fib}
	Let $\pi : S\to B$ be a fibration of poset constructed by $\Psi$, $\mathcal{F}$ is a presheaf on $S$, then $\pi_* \mathcal{F}$ is isomorphic with presheaf $$x \mapsto \text{colim}_{\Psi(x)^{op}}\mathcal{F}|_{\Psi(x)}$$
	where the restriction map $\text{colim}_{\Psi(y)^{op}}\mathcal{F}|_{\Psi(y)} \to \text{colim}_{\Psi(x)^{op}}\mathcal{F}|_{\Psi(x)}$ is induced by restrictions $\mathcal{F}(b) \to \mathcal{F}(\Psi(x\leq y)(b))$ for $x\leq y$ and $\pi(b) = y$.

\end{lem}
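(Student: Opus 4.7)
The plan is, for each fixed $x \in B$, to show that the inclusion
\[
\iota_x : \Psi(x)^{op} \hookrightarrow \{s \in S \mid \pi(s) \geq x\}^{op}
\]
is a cofinal functor, so that the two colimits in question agree pointwise. Unwinding the defining formula
\[
\pi_*\mathcal{F}(y) = \text{colim}_{\{s \in S \mid \pi(s) \geq y\}^{op}} \mathcal{F}
\]
from the adjointness theorem quoted earlier, cofinality of $\iota_x$ will immediately yield the pointwise isomorphism $\pi_*\mathcal{F}(x) \cong \text{colim}_{\Psi(x)^{op}} \mathcal{F}|_{\Psi(x)}$.

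For the cofinality check, fix $s \in S$ with $\pi(s) \geq x$ and set $r(s) = \Psi(x \leq \pi(s))(s) \in \Psi(x)$. The comma category $s \downarrow \iota_x$ is, after unwinding opposites, the poset of those $t \in \Psi(x)$ satisfying $t \leq s$ in $S$, with order inherited from $\Psi(x)^{op}$. By the defining condition of the fibred order in Definition \ref{fibposet}, the inequality $t \leq s$ in $S$ is equivalent to $t \leq r(s)$ in $\Psi(x)$. Hence $r(s)$ is a maximum of $s \downarrow \iota_x$ inside $\Psi(x)$, i.e.\ an initial object of $s \downarrow \iota_x$ viewed inside $\Psi(x)^{op}$. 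Any comma category with an initial object is nonempty and connected, so by the standard criterion $\iota_x$ is cofinal; equivalently, $s \mapsto r(s)$ is a left adjoint to $\iota_x$, which forces the colimits to agree.

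It remains to identify the restriction maps in the presheaf structure. For $x \leq y$ in $B$, the restriction $\pi_*\mathcal{F}(y) \to \pi_*\mathcal{F}(x)$ is induced by the inclusion of index categories $\{s \mid \pi(s) \geq y\}^{op} \hookrightarrow \{s \mid \pi(s) \geq x\}^{op}$. Under the identification constructed above, a class represented by $a \in \mathcal{F}(b)$ with $b \in \Psi(y)$ first represents itself in $\text{colim}_{\{s \mid \pi(s) \geq x\}^{op}} \mathcal{F}$, and collapsing via the cofinal embedding at $x$ identifies it with the class of $a|_{\Psi(x \leq y)(b)}$, since $\Psi(x \leq y)(b) = r(b)$ is the maximum of $b \downarrow \iota_x$. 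This is precisely the restriction map asserted in the statement.

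The only real obstacle is bookkeeping: one must carefully track the direction of the fibrewise order (the key equivalence $t \leq s$ in $S$ $\Leftrightarrow$ $t \leq \Psi(x \leq \pi(s))(s)$ in $\Psi(x)$ is a one-sided statement built into Definition \ref{fibposet}) and the repeated passage to opposite categories. Once these are set up correctly, the argument reduces to the standard fact that a functor admitting a left adjoint is cofinal.
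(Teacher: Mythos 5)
Your argument is correct, but it follows a genuinely different route from the paper's. You work directly with the pointwise formula $\pi_*\mathcal{F}(x)=\text{colim}_{\{s\in S\mid\pi(s)\geq x\}^{op}}\mathcal{F}$ and prove that the inclusion $\Psi(x)^{op}\hookrightarrow\{s\mid\pi(s)\geq x\}^{op}$ is final, using exactly the structural feature of Definition \ref{fibposet} that for $t\in\Psi(x)$ one has $t\leq s$ in $S$ iff $t\leq\Psi(x\leq\pi(s))(s)$ in $\Psi(x)$; the comma categories therefore have initial objects (equivalently, $s\mapsto\Psi(x\leq\pi(s))(s)$ is left adjoint to the inclusion after passing to opposites), so the two colimits agree, and chasing representatives identifies the restriction map as $[a]\mapsto[a|_{\Psi(x\leq y)(b)}]$, which is precisely what the lemma asserts. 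The paper never unwinds the colimit formula: it shows that the candidate presheaf $x\mapsto\text{colim}_{\Psi(x)^{op}}\mathcal{F}|_{\Psi(x)}$ is itself left adjoint to $\pi^*$, by converting a compatible system $f_a:\mathcal{F}(a)\to\mathcal{E}(\pi(a))$ into maps out of the fiberwise colimits and back, and then concludes by uniqueness of adjoints. Your version is more computational and has the benefit of exhibiting the isomorphism and the restriction maps explicitly on representatives, which is the form in which the lemma is actually used later (e.g.\ in the proof of Lemma \ref{addstr}); the paper's version is shorter and avoids the finality bookkeeping, at the cost of leaving the concrete description of the restrictions implicit. The only dependence worth flagging in your write-up is that you take the presheaf structure of $\pi_*\mathcal{F}$ to be the one induced by the inclusions of index posets $\{s\mid\pi(s)\geq y\}\subseteq\{s\mid\pi(s)\geq x\}$; this is the standard functoriality of the pointwise Kan-extension formula and is implicit in the theorem quoted from Artin, so it is a legitimate assumption rather than a gap.
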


\begin{proof}
	Denote $\mathcal{H}(\mathcal{F})$ the presheaf $x \mapsto \text{colim}_{\Psi(x)^{op}}\mathcal{F}|_{\Psi(x)}$, we only need to check $\mathcal{H}$ is left adjoint to $\pi^*$, i.e., $$\text{Hom}_{\text{PSh}(S)}(\mathcal{F}, \pi^*\mathcal{E}) = \text{Hom}_{\text{PSh}(B)}(\mathcal{H}(\mathcal{F}), \mathcal{E})$$ holds bifunctorially. An element in left side is system of maps $f_a:\mathcal{F}(a) \to \mathcal{E}(\pi(a))$, compatible with restriction, it induces system of maps $\text{colim}_{\Psi(x)^{op}}\mathcal{F}|_{\Psi(x)} \to \mathcal{E}(x)$ by universal property of colimit. Conversely, if we have system of maps $g_x: \text{colim}_{\Psi(x)^{op}}\mathcal{F}|_{\Psi(x)} \to \mathcal{E}(x)$, let $f_a:\mathcal{F}(a) \to \mathcal{E}(\pi(a))$ be the composition of $\mathcal{F}(a) \to \text{colim}_{\Psi(x)^{op}}\mathcal{F}|_{\Psi(x)} \xrightarrow{g_x} \mathcal{E}(\pi(a))$ where $x = \pi(a)$. Then we obtain a natural one-to-one correspondence between $\text{Hom}_{\text{PSh}(P)}(\mathcal{F}, \pi^*\mathcal{G})$ and $\text{Hom}_{\text{PSh}(B)}(\mathcal{H}(\mathcal{F}), \mathcal{G})$.

\end{proof}
\begin{defn}
With the same assumption, denote $H_n(\Psi;\mathcal{F})$ the presheaf $$x \mapsto H_n(\Psi(x);\mathcal{F}|_{\Psi(x)})$$ where the  restriction maps $H_n(\Psi;\mathcal{F})(y) \to H_n(\Psi;\mathcal{F})(x)$ are induced by restrictions (in presheaf $\mathcal{F}$) $\mathcal{F}(b) \to \mathcal{F}(\Psi(x\leq y)(b))$ for $x\leq y$ and $\pi(b) = y$. We see that $H_0(\Psi;\mathcal{F}) = \pi_* \mathcal{F}$ by above lemma.
\end{defn}
\begin{lem}\label{derived pi}
	Let $\pi: S \to B$ be a fibration of poset constructed by $\Psi$, $\mathcal{F}$ is a presheaf on  $S$, there is natural isomorphism of presheaves
	$$L_n\pi_*\mathcal{F}\cong  H_n(\Psi;\mathcal{F})$$
\end{lem}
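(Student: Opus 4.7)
The plan is to choose a convenient projective resolution of $\mathcal{F}$, push it through $\pi_*$, and show that evaluating at each $x\in B$ recovers the homology of the fiber $\Psi(x)$. By Lemma \ref{lem of resolution} we may pick a projective resolution $\mathcal{F}_\bullet\to\mathcal{F}$ in $\text{PSh}(S)$ such that each $\mathcal{F}_i$ is a direct sum of presheaves of the form $j_{a*}\underline{\mathbb{Z}}$. Then $L_n\pi_*\mathcal{F}$ is the $n$-th homology of the chain complex of presheaves $\pi_*\mathcal{F}_\bullet$, and by Lemma \ref{lem of fib} its value at $x\in B$ is the $n$-th homology of the chain complex $\text{colim}_{\Psi(x)^{op}}\mathcal{F}_\bullet|_{\Psi(x)}$ of abelian groups (homology commutes with evaluation since evaluation is exact).

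The central step is to argue that for each $x\in B$ the restricted chain $\mathcal{F}_\bullet|_{\Psi(x)}$ is itself a projective resolution of $\mathcal{F}|_{\Psi(x)}$ in $\text{PSh}(\Psi(x))$. Exactness is automatic because restriction to $\Psi(x)$ is exact. For projectivity one unpacks the order on $S$: for $a\in S$ with $\pi(a)=y$, Definition \ref{fibposet} implies that an element $b\in\Psi(x)$ satisfies $b\leq a$ in $S$ iff $x\leq y$ and $b\leq\Psi(x\leq y)(a)$ in $\Psi(x)$. Hence
$$j_{a*}\underline{\mathbb{Z}}|_{\Psi(x)}=\begin{cases} j_{\Psi(x\leq y)(a)*}\underline{\mathbb{Z}} & \text{if } x\leq y,\\ 0 & \text{otherwise,}\end{cases}$$
which is projective in $\text{PSh}(\Psi(x))$ by Lemma \ref{lem of resolution}. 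Each $\mathcal{F}_i|_{\Psi(x)}$ is therefore a direct sum of projectives and hence projective.

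With this in place, the $n$-th homology of $\text{colim}_{\Psi(x)^{op}}\mathcal{F}_\bullet|_{\Psi(x)}$ computes the left-derived functor $L_n(\text{colim}_{\Psi(x)^{op}})(\mathcal{F}|_{\Psi(x)})=H_n(\Psi(x);\mathcal{F}|_{\Psi(x)})$, giving the desired pointwise isomorphism $(L_n\pi_*\mathcal{F})(x)\cong H_n(\Psi(x);\mathcal{F}|_{\Psi(x)})$. For naturality one checks that the restriction maps in the presheaf $\pi_*\mathcal{F}_\bullet$ coincide, under these isomorphisms, with the restriction maps prescribed for $H_n(\Psi;\mathcal{F})$. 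This is immediate from the functoriality of colimits: for $x\leq y$ the map $\text{colim}_{\Psi(y)^{op}}\mathcal{F}_i|_{\Psi(y)}\to\text{colim}_{\Psi(x)^{op}}\mathcal{F}_i|_{\Psi(x)}$ is induced by the transition $\Psi(x\leq y)$ together with the restriction maps of $\mathcal{F}_i$, exactly as in the definition of $H_n(\Psi;\mathcal{F})$.

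I expect the main obstacle to be the identification of $j_{a*}\underline{\mathbb{Z}}|_{\Psi(x)}$ as a generating projective of $\text{PSh}(\Psi(x))$, which requires carefully unwinding the Grothendieck fibration order; once that is clear, the rest is a standard derived-functor computation via an adapted resolution.
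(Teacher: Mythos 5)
Your proof is correct, and it reaches the conclusion by a slightly different route than the paper. Both arguments turn on the same key observation, which you identify and prove correctly: unwinding Definition \ref{fibposet} gives $S_{\leq a}\cap\Psi(x)=\Psi(x)_{\leq\Psi(x\leq y)(a)}$ for $x\leq y=\pi(a)$ and $\emptyset$ otherwise, so $j_{a*}\underline{\mathbb{Z}}|_{\Psi(x)}$ is either a projective generator of $\text{PSh}(\Psi(x))$ or zero. The paper packages this via the universal $\delta$-functor formalism: $\mathcal{F}\mapsto H_n(\Psi;\mathcal{F})$ is a $\delta$-functor into $\text{PSh}(B)$ that vanishes on the projectives $j_{a*}\underline{\mathbb{Z}}$ in positive degrees, and its $H_0$ is $\pi_*$ by Lemma \ref{lem of fib}, so it coincides with $L_n\pi_*$; naturality in both $\mathcal{F}$ and $x$ comes for free because the whole comparison happens in $\text{PSh}(B)$. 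You instead compute with an adapted resolution: choosing $\mathcal{F}_\bullet$ as in Lemma \ref{lem of resolution}, you show it restricts to a projective resolution on each fiber, evaluate $\pi_*\mathcal{F}_\bullet$ pointwise via Lemma \ref{lem of fib}, and identify the pointwise homology with $H_n(\Psi(x);\mathcal{F}|_{\Psi(x)})$. This buys a concrete chain-level description of the isomorphism, at the cost of having to check naturality in $x$ by hand; your final step ("immediate from the functoriality of colimits") is the only place that deserves a touch more care, since identifying the maps induced on homology by the restriction maps of $\pi_*\mathcal{F}_\bullet$ with the maps induced by the $\Psi(x\leq y)$-homomorphisms $\mathcal{F}|_{\Psi(y)}\rightsquigarrow\mathcal{F}|_{\Psi(x)}$ requires the standard comparison of a lift to projective resolutions with the induced map of Theorem \ref{f-hom} (the same dimension-shifting argument the paper uses elsewhere). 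That verification is routine, so there is no real gap.
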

\begin{proof}
	$\mathcal{F} \mapsto H_n(\Psi;\mathcal{F})$ is obviously a $\delta$-functor. Notice 
	that $S_{\leq a} \cap \Psi(x) = \Psi(x)_{\leq \Psi(x\leq \pi(a))(a)}$ for $x\leq \pi(a)$ and be empty set for $x \nleq \pi(a)$ by definition of fibration, so $j_{a*}\underline{\mathbb{Z}} |_{\Psi(x)}$ is always a projective presheaf on $\Psi(x)$ and $H_n(\Psi;j_{a*}\underline{\mathbb{Z}})$ is trivial for positive $n$. Then $H_n(\Psi;-)$ is a universal $\delta$-functor, equals $L_n H_0(\Psi;-)=L_n\pi_*$.
\end{proof}
The next theorem is an easy corollary of Grothendieck spectral sequence.
\begin{thm}\label{ss fib}
	Let $\pi: S \to B$ be a fibration of poset constructed by $\Psi$, $\mathcal{F}$ is a presheaf on $S$, $\mathcal{G}$ be a copresheaf on $B$. There is a spectral sequence with $E^2_{p,q} = \text{Tor}_q^B(H_p(\Psi,\mathcal{F}),\mathcal{G})$ which converges to $\text{Tor}_*^S(\mathcal{F},\pi^{*}\mathcal{G})$.
\end{thm}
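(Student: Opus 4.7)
The plan is to recognize this as a direct instance of the Grothendieck spectral sequence for a composition of two right-exact functors.

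Write $G = \pi_* : \text{PSh}(S) \to \text{PSh}(B)$ and $F = -\otimes_B \mathcal{G} : \text{PSh}(B) \to Ab$. Both are right exact ($G$ is a left adjoint to $\pi^*$, and $-\otimes_B\mathcal{G}$ is the coequalizer appearing in the tensor product definition). The crucial preliminary observation is that their composition computes the target of the spectral sequence: indeed, the natural isomorphism
$$\mathcal{F}\otimes_S \pi^*\mathcal{G}\cong \pi_*\mathcal{F}\otimes_B\mathcal{G}$$
is exactly the content of Equation \ref{eq fib tensor} (applied with $f=\pi$). Hence $L_n(F\circ G)(\mathcal{F}) \cong \text{Tor}_n^S(\mathcal{F},\pi^*\mathcal{G})$.

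Next I would identify the two derived functors individually. By Lemma \ref{derived pi}, $L_p G \cong H_p(\Psi,-)$, which supplies the presheaves appearing in $E^2_{p,q}$. And by definition $L_q F = \text{Tor}_q^B(-,\mathcal{G})$. So the Grothendieck spectral sequence, once its hypothesis is checked, yields exactly
$$E^2_{p,q} = L_q F\bigl(L_p G(\mathcal{F})\bigr) = \text{Tor}_q^B\bigl(H_p(\Psi,\mathcal{F}),\mathcal{G}\bigr) \;\Longrightarrow\; \text{Tor}_{p+q}^S(\mathcal{F},\pi^*\mathcal{G}).$$

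The one thing to verify is the Grothendieck hypothesis: $G=\pi_*$ sends projectives in $\text{PSh}(S)$ to $F$-acyclic objects of $\text{PSh}(B)$. By Lemma \ref{lem of resolution}, it suffices to check this on the generating projectives $j_{a*}\underline{\mathbb{Z}}$ for $a\in S$ (and direct sums thereof, which is automatic since $\pi_*$ and $F$ both commute with direct sums). A short computation, either directly from the definition of $\pi_*$ in Definition \ref{fmap} or from functoriality of $(-)_*$ along the composition $\bullet \xrightarrow{j_a} S \xrightarrow{\pi} B$, identifies
$$\pi_* j_{a*}\underline{\mathbb{Z}} \;\cong\; j_{\pi(a)*}\underline{\mathbb{Z}},$$
which is itself projective in $\text{PSh}(B)$, and in particular $F$-acyclic (its Tor against $\mathcal{G}$ vanishes in positive degrees because it is its own projective resolution). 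This is the only step requiring any work; everything else is bookkeeping. Once this is in hand, the existence of the Grothendieck spectral sequence converging to $L_n(F\circ G)(\mathcal{F})$ is the standard one (see e.g.\ Weibel, Theorem 5.8.3 in the dual homological form), and the theorem follows.
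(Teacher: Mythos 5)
Your proposal is correct and follows essentially the same route as the paper: factor the functor as $(-\otimes_B\mathcal{G})\circ\pi_*$, invoke the Grothendieck spectral sequence, and identify the $E^2$ page and abutment via Lemma \ref{derived pi} and Equation \ref{eq fib tensor}. The only (harmless) difference is in checking the hypothesis: the paper notes in one line that $\pi_*$ preserves projectives because it is left adjoint to the exact functor $\pi^*$, whereas you verify acyclicity directly on the generators via $\pi_* j_{a*}\underline{\mathbb{Z}}\cong j_{\pi(a)*}\underline{\mathbb{Z}}$, which amounts to the same thing.
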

\begin{proof}
	Consider two functors $\pi_{*} : \text{PSh}(S) \to \text{PSh}(B)$ and $- \otimes_B\mathcal{G} : \text{PSh}(B) \to Ab$. It's well known that $\pi_{*}$ preserve projective object since it is left adjoint of exact functor $\pi^{*}$, then there is a Grothendieck spectral sequence with $E^2_{p,q}=\text{Tor}_q^B(L_p\pi_*\mathcal{F},\mathcal{G})$ converges to $L_*((\pi_*-)\otimes_B \mathcal{G})(\mathcal{F})$, combine the
	 Equation \ref{eq fib tensor} and Lemma \ref{derived pi}, we obtain the required result.
\end{proof}

Naturally, map between two fibrations should induce map of spectral sequence. We define map $f$ between two fibrations $S\to B, S'\to B'$ (constructed by $\Psi,\Psi'$) to be the following commutative diagram 
\begin{equation}\label{map of fib}
	\xymatrix{
		S \ar[r]^{f_s} \ar[d]^{\pi} & S' \ar[d]^{\pi'}\\
		B \ar[r]^{f_b} & B'
	}
\end{equation}

Let $\mathcal{F},\mathcal{F}'$ be presheaves on $S,S'$ and $k: \mathcal{F} \rightsquigarrow\mathcal{F}'$ be a $f_s$-homomorphism, then the restrictions $k|_{\Psi(x)}$ are $f_s|_{\Psi(x)}$-homomorphism for any $x\in B$, induce maps of homology $$H_n(\Psi(x);\mathcal{F}|_{\Psi(x)}) \to H_n(\Psi'(f_b(x));\mathcal{F}'|_{\Psi'(f_b(x))})$$ It's easy to check these maps being a $f_b$-homomorphism 
$$H_n(\Psi;\mathcal{F}) \rightsquigarrow H_n(\Psi';\mathcal{F}')$$ denoted by $H_n(f,k)$.

Meanwhile, let $\mathcal{G},\mathcal{G}'$ be copresheaves on $B,B'$ and $t: \mathcal{G}\rightsquigarrow\mathcal{G}'$ be a $f_b$-homomorphism, $\pi^*t : \pi^*\mathcal{G} \rightsquigarrow \pi'^*\mathcal{G}'$ be pull back of $t$ along the Diagram \ref{map of fib}.

\begin{lem}\label{map of fibration}
	With above assumptions and notations, we have a morphism of two associated spectral sequences, whose map of $E^2$ page equals the map $$\text{Tor}_q^B(H_p(\Psi;\mathcal{F}),\mathcal{G}) \to \text{Tor}_q^B(H_p(\Psi';\mathcal{F}'),\mathcal{G}')$$ induced by $H_p(f,k)$ and $t$, converges to the map of Tor $$\text{Tor}_*^S(\mathcal{F},\pi^{*}\mathcal{G}) \to \text{Tor}_*^S(\mathcal{F}',\pi'^{*}\mathcal{G}')$$ induced by $k$ and $\pi^*t$.
\end{lem}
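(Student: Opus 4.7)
Both spectral sequences come from Grothendieck's theorem applied to the compositions $(-\otimes_B\mathcal{G})\circ\pi_*$ and $(-\otimes_{B'}\mathcal{G}')\circ\pi'_*$, where $\pi_*,\pi'_*$ preserve projectives as left adjoints to the exact functors $\pi^*,\pi'^*$. The natural strategy is to realise each spectral sequence as that of a filtered double complex and then produce a compatible chain map between those double complexes. The first step is to convert the given $f$-homomorphisms into honest morphisms via adjunction: $k$ corresponds to a morphism $\tilde{k}\colon f_{s*}\mathcal{F}\to\mathcal{F}'$ in $\text{PSh}(S')$, and $t$ to $\tilde{t}\colon \mathcal{G}\to f_b^{*}\mathcal{G}'$ in $\text{CoPSh}(B)$. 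From the commutativity $\pi'\circ f_s=f_b\circ\pi$ one extracts the key functorial identity $\pi'_{*}\circ f_{s*}=f_{b*}\circ\pi_{*}$.

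\textbf{Construction.} Choose projective resolutions $P_\bullet\to\mathcal{F}$ in $\text{PSh}(S)$ and $P'_\bullet\to\mathcal{F}'$ in $\text{PSh}(S')$, and lift $\tilde{k}$ to a chain map $f_{s*}P_\bullet\to P'_\bullet$ (possible up to chain homotopy because the source is a bounded-above complex of projectives). Applying $\pi'_{*}$ and using the identity of the previous step yields a chain map $f_{b*}\pi_{*}P_\bullet\to\pi'_{*}P'_\bullet$, equivalently an $f_b$-chain map $\pi_{*}P_\bullet\rightsquigarrow\pi'_{*}P'_\bullet$. Now take Cartan--Eilenberg resolutions of $\pi_{*}P_\bullet$ and $\pi'_{*}P'_\bullet$ in $\text{PSh}(B)$ and $\text{PSh}(B')$ respectively that carry this chain map through, tensor the first with $\mathcal{G}$ over $B$ and the second with $\mathcal{G}'$ over $B'$, and use $\tilde{t}$ to splice them into a map of double complexes of abelian groups. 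Either standard filtration recovers the spectral sequence of Theorem \ref{ss fib} on each side, so one obtains automatically a morphism of spectral sequences. Its abutment is, by construction, the map $\text{Tor}^{S}_{*}(\mathcal{F},\pi^{*}\mathcal{G})\to\text{Tor}^{S'}_{*}(\mathcal{F}',\pi'^{*}\mathcal{G}')$ induced by $k$ and $\pi^{*}t$, once one observes (via Example \ref{ex of pull back by diagram}) that $\pi^{*}t$ factors through the canonical identification $\pi^{*}f_b^{*}\mathcal{G}'=f_s^{*}\pi'^{*}\mathcal{G}'$.

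\textbf{Identification of $E^2$ and main obstacle.} On the $E^2$ page, Lemma \ref{derived pi} gives $L_p\pi_{*}\mathcal{F}\cong H_p(\Psi;\mathcal{F})$ and likewise for the primed data, so $E^2_{p,q}$ takes the claimed form on both sides. To check that the induced map equals $\text{Tor}^{f_b}_q(H_p(f,k),t)$, one invokes the uniqueness clause of Theorem \ref{f-hom}: both maps are $\delta$-functorial in the presheaf argument, both are induced by $t$ in the copresheaf argument, and both agree in degree zero, where the map reduces via Lemma \ref{lem of fib} to the one on colimits induced fibrewise by $k$. The main difficulty lies not in any individual step but in the adjoint-functor bookkeeping, namely verifying that the identity $\pi'_{*}f_{s*}=f_{b*}\pi_{*}$ is compatible with the chosen Cartan--Eilenberg resolutions and that the derived $f_b$-homomorphism extracted from $\tilde{k}$ on homology really agrees with the fibrewise $H_p(f,k)$; this is handled by the same dimension-shifting argument used in the proof of Theorem \ref{f-hom}.
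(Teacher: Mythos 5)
Your proposal is correct and follows essentially the same route as the paper: extend the $f_s$-homomorphism $k$ over projective resolutions, push forward along $\pi_*,\pi'_*$ (your identity $\pi'_*f_{s*}=f_{b*}\pi_*$ is the adjunction bookkeeping the paper leaves implicit), carry the resulting $f_b$-chain map through Cartan--Eilenberg resolutions to get a morphism of the Grothendieck spectral sequences of Theorem \ref{ss fib}, and identify the $E^2$ map with the one induced by $H_p(f,k)$ and $t$ by the same dimension-shifting argument as in Theorem \ref{f-hom}. No substantive difference from the paper's proof.
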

\begin{proof}
	Choose projective resolutions $\mathcal{F}_i \to \mathcal{F}$, $\mathcal{F}'_i \to \mathcal{F}'$ and extends $k$ to these two resolutions, and apply $\pi_*$ to get $f_b$-homomorphism of chains 
	\begin{equation}\label{m1}
		\pi_*\mathcal{F}_i \to \pi'_*\mathcal{F}'_i
	\end{equation} induce a $f_b$-homomorphism between  the Cartan-Eilenberg resolutions of $\pi_*\mathcal{F}_i$ and $\pi'_*\mathcal{F}'_i$. Then this map of Cartan-Eilenberg resolution induce a map of Grothendieck spectral sequence in Theorem \ref{ss fib}. 

    Then we only need to check that the map $L_n\pi_*\mathcal{F} \to L_n\pi'_*\mathcal{F}'$ (induce by map \ref{m1})  coincide with $H_p(f,k)$ we have discussed above. This can be proved by a standard "dimension shifting" process as in the proof of Theorem \ref{f-hom}.
\end{proof}


\section{cellular methods}

\subsection{Cellular poset and cellular form}

In topology a CW-complex is built up by sequentially attaching cells on points, the homology can be computed by cellular chain complex which always simpler than simplicial chain or singular chain. In this subsection, we give the definition of (homological) cellular of a pair $(P,\mathcal{G})$ and study the associated cellular chain complex and morphisms between them, extending the idea in \cite{Everitt2015}.

\begin{defn}
	Let $P$ be a graded poset, $\mathcal{G}$ is a copresheaf on $P$, we define $(P,\mathcal{G})$ to be homological cellular if   $$\text{Tor}^P_i(\delta_x\mathbb{Z},\mathcal{G})=0 \text{ for } x\in P ,i\neq r(x)$$
\end{defn}

We will prove later that $(P,\mathcal{G})$ is homological cellular if and only if the following cellular form exists, with out calculating Tor by $K_*(P,\mathcal{G};\delta_x\mathbb{Z})$.
\vskip .5em
\textbf{Notation.} For any $P$-graded module $\varLambda=\bigoplus_{x\in P}\varLambda_x$, denote $\varLambda_{\leq x}=\bigoplus_{y\in P,y\leq x}\varLambda_y$, similarly, denote $\varLambda_{< x}=\bigoplus_{y\in P,y< x}\varLambda_y$

\begin{defn}\label{cellular form}
	We call a $P$-graded differential $\mathbb{Z}$-module $\varLambda(P,\mathcal{G})= \bigoplus_{x\in P}\varLambda(P,\mathcal{G})_x$, with differential $\partial$, be a cellular form of $(P,\mathcal{G})$ if  
	\begin{enumerate}
		\item $\partial$ maps $\varLambda(P,\mathcal{G})_{x}$ to $\bigoplus\limits_{y\in P,y\lessdot x}\varLambda(P,\mathcal{G})_{y}$ for any $x \in P$, so that $\varLambda(P,\mathcal{G})_{\leq x}$ is sub-chain complex for any $x\in P$.
		\item $\varLambda(P,\mathcal{G})_{\leq x}$ has trivial homology for positive degree.
		\item Denote $\epsilon$ the copresheaf $x \mapsto H_0(\varLambda(P,\mathcal{G})_{\leq x})$ with extension map $H_0(\varLambda(P,\mathcal{G})_{\leq x}) \to H_0(\varLambda(P,\mathcal{G})_{\leq y})$ induced by inclusion $\varLambda(P,\mathcal{G})_{\leq x}\hookrightarrow\varLambda(P,\mathcal{G})_{\leq y}$ for $x\leq y$. There is an \textbf{isomorphism of copresheaves} $  \epsilon \cong \mathcal{G}$.
	\end{enumerate}
\end{defn}
\begin{rem}\label{form rem}
	Note that for any rank zero element $x_0\in P$, the chain $\varLambda(P,\mathcal{G})_{\leq x_0}$ has only one degree zero term $ \varLambda(P,\mathcal{G})_{ x_0}$, so $\mathcal{G}(x_0)\cong H_0(\varLambda(P,\mathcal{G})_{\leq x_0})=\varLambda(P,\mathcal{G})_{ x_0}$ by definition of cellular form.
	From now on, if the cellular form exists, we don't distinguish $\varLambda(P,\mathcal{G})_{ x_0}$ and $\mathcal{G}(x_0)$ for any rank zero $x_0$. 
\end{rem}

\begin{rem}
	Notice that $K_*(P,\mathcal{G};\delta_x\mathbb{Z})$	is a "big" chain complex and can not be calculated in polynomial time. However, the existence of cellular form can be checked in polynomial time, see Appendix.
\end{rem}	

\begin{example}\label{OS}
	The OS-algebra $A^*(L)$ (regard it as a $L$-graded differential module and omit the product structure on it for now) of a geometric lattice $L$ is a cellular form of $(L,\delta^{\hat{0}}\mathbb{Z})$ by property of OS-algebra, see \cite{Yuzvinsky2001} or \cite{Dimca2009} for OS-algebra.
\end{example}

\begin{thm}
	The cellular form is unique if it exists.
\end{thm}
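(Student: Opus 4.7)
The plan is to prove uniqueness by induction on the rank of elements of $P$, building an isomorphism $\varphi: \varLambda(P,\mathcal{G}) \to \varLambda'(P,\mathcal{G})$ between any two cellular forms that preserves the $P$-grading, commutes with differentials, and intertwines the identifications $\epsilon \cong \mathcal{G} \cong \epsilon'$. At rank zero Remark \ref{form rem} canonically identifies both $\varLambda(P,\mathcal{G})_{x_0}$ and $\varLambda'(P,\mathcal{G})_{x_0}$ with $\mathcal{G}(x_0)$, so $\varphi_{x_0}$ is forced to be the composite of these identifications.

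For the inductive step, suppose an isomorphism $\varphi_{<x}: \varLambda(P,\mathcal{G})_{<x} \to \varLambda'(P,\mathcal{G})_{<x}$ of $P$-graded chain subcomplexes has been built compatibly with all data. The short exact sequence of chain complexes
$$0 \to \varLambda(P,\mathcal{G})_{<x} \to \varLambda(P,\mathcal{G})_{\leq x} \to \varLambda(P,\mathcal{G})_x[r(x)] \to 0$$
and its analogue for $\varLambda'$ (where $[r(x)]$ denotes placement in degree $r(x)$) yield long exact sequences whose analysis, combined with the axiom $H_i(\varLambda(P,\mathcal{G})_{\leq x})=0$ for $i>0$ and $H_0\cong \mathcal{G}(x)$, gives $H_i(\varLambda(P,\mathcal{G})_{<x})=0$ for $i\geq r(x)$, a canonical identification $\varLambda(P,\mathcal{G})_x \cong H_{r(x)-1}(\varLambda(P,\mathcal{G})_{<x})$ via the connecting map when $r(x)\geq 2$, and $\varLambda(P,\mathcal{G})_x \cong \ker(H_0(\varLambda(P,\mathcal{G})_{<x}) \to \mathcal{G}(x))$ when $r(x)=1$.

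The key observation that removes any chain-level ambiguity is that, since $P$ is graded, $y<x$ implies $r(y)<r(x)$, so $\varLambda(P,\mathcal{G})_{<x}$ contains no chains in degree $r(x)$; consequently the boundary group in degree $r(x)-1$ vanishes and $H_{r(x)-1}(\varLambda(P,\mathcal{G})_{<x}) = Z_{r(x)-1}(\varLambda(P,\mathcal{G})_{<x})$. The differential $\partial: \varLambda(P,\mathcal{G})_x \to \varLambda(P,\mathcal{G})_{<x}$ is therefore an injection onto the entire group of top-degree cycles, and the prescription $\varphi_x = (\partial')^{-1}\circ \varphi_{<x}\circ \partial$ unambiguously defines $\varphi_x: \varLambda(P,\mathcal{G})_x \to \varLambda'(P,\mathcal{G})_x$, with commutativity with the differentials built in by construction and the $P$-grading preserved because $\partial$ lands in $\bigoplus_{y\lessdot x}\varLambda(P,\mathcal{G})_y$ and $\varphi_{<x}$ respects the cover-by-cover decomposition.

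The delicate part I expect is propagating the identifications $H_0(\varLambda(P,\mathcal{G})_{\leq y}) \cong \mathcal{G}(y) \cong H_0(\varLambda'(P,\mathcal{G})_{\leq y})$ through the induction, so that the resulting $\varphi$ intertwines the two copresheaf isomorphisms on all of $P$ and not merely on rank zero. This should follow from the naturality of the connecting homomorphism together with the commutativity, forced by the inductive hypothesis below $x$, of the squares comparing the extension map $H_0(\varLambda(P,\mathcal{G})_{\leq y}) \to H_0(\varLambda(P,\mathcal{G})_{\leq z})$ and the copresheaf map $\mathcal{G}(y)\to \mathcal{G}(z)$ for every $y\leq z$ of rank less than $r(x)$, which identifies the $\mathcal{G}(x)$-cokernel produced by $\varphi_{\leq x}$ with the one produced by $\varphi'_{\leq x}$.
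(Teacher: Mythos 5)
Your argument is correct, and its engine is the same one the paper uses, but you package it differently. You specialize immediately to the two given forms and build the isomorphism directly by induction on rank via $\varphi_x=(\partial')^{-1}\circ\varphi_{<x}\circ\partial$, justifying invertibility by identifying $\varLambda(P,\mathcal{G})_x$, through the connecting homomorphism of $0\to\varLambda_{<x}\to\varLambda_{\leq x}\to\varLambda_x[r(x)]\to 0$, with the full group of degree-$(r(x)-1)$ cycles of $\varLambda_{<x}$ (boundaries vanish there because a graded poset has no $y<x$ with $r(y)\geq r(x)$); the "delicate part" you flag at rank $\leq 1$ is exactly the content of condition (3) of Definition \ref{cellular form} being an isomorphism of \emph{copresheaves}, i.e.\ naturality with respect to the extension maps, which the paper isolates as Lemma \ref{lem of form}(1)--(2), so your sketch there closes correctly. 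The paper instead proves the general Theorem \ref{form map}: existence \emph{and uniqueness} of a morphism of cellular forms associated with any rank-nonincreasing $f$ and $f$-homomorphism $t$ (with the extra vanishing clause when $r(f(x))<r(x)$), by the same $\partial^{-1}\varPhi\partial$ induction, and then deduces uniqueness of the form formally in Corollary \ref{form unique}: the two canonical morphisms $\varLambda\to\varLambda'$ and $\varLambda'\to\varLambda$ associated with the identity compose to morphisms associated with the identity, hence equal $\id$ by the uniqueness clause, so no degreewise invertibility check is needed. What each route buys: yours is self-contained and slightly shorter for this one statement, and makes the cycle-level mechanism transparent; the paper's detour through Theorem \ref{form map} is not wasted generality, since that theorem is reused repeatedly later (the OS-algebra product as a morphism of cellular forms in Example \ref{fm1}, the map $\Phi$ of Theorem \ref{mf1}, and the cellular chain maps of Theorem \ref{cc map}), and its built-in uniqueness of morphisms is what makes the two-sided-inverse argument immediate.
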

\begin{proof}
	This theorem is a corollary of a more general Theorem \ref{form map} , we prove it in Corollary \ref{form unique}.
\end{proof}

\vskip .5em
\subsection{Cellular chain}~
\vskip .5em
We can calculate Tor by cellular chain.
\begin{defn}[cellular chain]\label{cellular chain}
	Let $P$ be a graded poset, $\mathcal{G}$ is a free copresheaf on $P$, $\varLambda(P,\mathcal{G})$ is cellular form of $(P,\mathcal{G})$, $\mathcal{F}$ is a presheaf on $P$, the cellular chain complex $C_*(P,\mathcal{G};\mathcal{F})$ (with differential $d$) is defined by $$C_i(P,\mathcal{G};\mathcal{F})= \bigoplus_{x\in P,r(x)=i} \varLambda(P,\mathcal{G})_x \otimes \mathcal{F}(x) $$  $$ d(a \otimes c) = \sum_i a_i \otimes c|_{x_i}$$ where $a\in \varLambda(P,\mathcal{G})_x$, $c\in  \mathcal{F}(x)$, $\partial(a) = \sum_i a_i$, $a_i \in \varLambda(P,\mathcal{G})_{x_i}$ for some $x_i \lessdot x$. 
\end{defn}

It's easy to check that	$d$ in above definition is a differential.

\begin{thm}\label{cellular homology}
	With the same assumption of Definition \ref{cellular chain}, there is a natural isomorphism $$\text{Tor}^P_i(\mathcal{F},\mathcal{G}) \cong H_i(C_*(P,\mathcal{G};\mathcal{F}))$$
\end{thm}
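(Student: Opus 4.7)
The plan is to build a double complex that compares the cellular chain to the usual Tor-computing chain, and read off the isomorphism from the two spectral sequences.

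First I would establish that each $\varLambda(P,\mathcal{G})_x$ is a free abelian group. This proceeds by induction on rank: for rank zero, $\varLambda(P,\mathcal{G})_{x_0}\cong \mathcal{G}(x_0)$ by Remark \ref{form rem}, which is free by our standing assumption. For $r(x)=n>0$, the subcomplex $\varLambda(P,\mathcal{G})_{\leq x}$ has vanishing higher homology and $H_0\cong \mathcal{G}(x)$, so the cycles in top degree coincide with the boundaries, giving a short exact sequence
\begin{equation*}
0\to \varLambda(P,\mathcal{G})_{x}\to \bigoplus_{y\lessdot x}\varLambda(P,\mathcal{G})_{\leq x, n-1}\cap \varLambda(P,\mathcal{G})_{\leq y}\to \ker(\text{boundary})\to 0.
\end{equation*}
Since the middle and right terms are free (by the inductive hypothesis and the fact that subgroups of free abelian groups are free), $\varLambda(P,\mathcal{G})_x$ is free. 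In particular, tensoring with $\varLambda(P,\mathcal{G})_x$ preserves exact sequences of abelian groups.

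Next I would choose a projective resolution $\mathcal{F}_\bullet\to \mathcal{F}$ in which each $\mathcal{F}_p$ is a direct sum of presheaves of the form $j_{x*}\underline{\mathbb{Z}}$, as provided by Lemma \ref{lem of resolution}. Form the first-quadrant double complex $\mathcal{D}_{p,q}:=C_q(P,\mathcal{G};\mathcal{F}_p)$ with horizontal differential coming from the resolution and vertical differential from the cellular chain, and consider its two standard spectral sequences.

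The key observation is that for $\mathcal{F}_p=\bigoplus_i j_{x_i*}\underline{\mathbb{Z}}$, the cellular chain complex $C_*(P,\mathcal{G};\mathcal{F}_p)$ is exactly $\bigoplus_i \varLambda(P,\mathcal{G})_{\leq x_i}$ (since $j_{x*}\underline{\mathbb{Z}}(y)=\mathbb{Z}$ precisely when $y\leq x$), which by definition of cellular form has vanishing higher homology and $H_0\cong \bigoplus_i \mathcal{G}(x_i)\cong \mathcal{F}_p\otimes_P\mathcal{G}$, the last identification by Example \ref{ex yoneda}. Thus filtering by $p$ first, rows collapse onto $\mathcal{F}_\bullet\otimes_P\mathcal{G}$, and then the $p$-differential produces $E^\infty_{0,*}=\text{Tor}^P_*(\mathcal{F},\mathcal{G})$.

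Filtering the other way, column $q$ is $\bigoplus_{r(x)=q}\varLambda(P,\mathcal{G})_x\otimes \mathcal{F}_\bullet(x)$. Since the category of presheaves has exactness pointwise and each $\mathcal{F}_p(x)$ is a free abelian group, $\mathcal{F}_\bullet(x)$ is a free resolution of $\mathcal{F}(x)$; by freeness of $\varLambda(P,\mathcal{G})_x$ established above, tensoring still yields a resolution, so each column has homology concentrated in degree zero equal to $\bigoplus_{r(x)=q}\varLambda(P,\mathcal{G})_x\otimes\mathcal{F}(x)=C_q(P,\mathcal{G};\mathcal{F})$. The remaining $q$-differential then gives $E^\infty_{*,0}=H_*(C_*(P,\mathcal{G};\mathcal{F}))$. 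Since both spectral sequences degenerate at $E^2$ and converge to the homology of the total complex, we obtain a natural isomorphism $\text{Tor}^P_i(\mathcal{F},\mathcal{G})\cong H_i(C_*(P,\mathcal{G};\mathcal{F}))$; naturality in $\mathcal{F}$ follows from functoriality of the double complex construction and the comparison theorem for projective resolutions. The main subtlety is the freeness step at the start, without which the column spectral sequence computation breaks down; the rest is essentially Cartan--Eilenberg bookkeeping.
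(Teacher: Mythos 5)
Your proof is correct, but it takes a genuinely different route from the paper. The paper proves the statement by the universal $\delta$-functor machinery: using freeness of the pieces $\varLambda(P,\mathcal{G})_x$ (Lemma \ref{lem of form}) it shows $\mathcal{F}\mapsto H_i(C_*(P,\mathcal{G};\mathcal{F}))$ is a $\delta$-functor, observes that $C_*(P,\mathcal{G};j_{x*}\underline{\mathbb{Z}})=\varLambda(P,\mathcal{G})_{\leq x}$ is acyclic in positive degrees so the functor is universal, and identifies $H_0$ with $-\otimes_P\mathcal{G}$; uniqueness of universal $\delta$-functors then gives the isomorphism, with naturality automatic. You use exactly the same two computational inputs (freeness of $\varLambda(P,\mathcal{G})_x$, and acyclicity of $C_*(P,\mathcal{G};j_{x*}\underline{\mathbb{Z}})$ with $H_0\cong\mathcal{G}(x)\cong j_{x*}\underline{\mathbb{Z}}\otimes_P\mathcal{G}$), but organize them as a balancing argument: the double complex $C_q(P,\mathcal{G};\mathcal{F}_p)$ over a resolution by sums of $j_{x*}\underline{\mathbb{Z}}$ and its two degenerate spectral sequences, one yielding $\text{Tor}^P_*(\mathcal{F},\mathcal{G})$ and the other $H_*(C_*(P,\mathcal{G};\mathcal{F}))$. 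Your route is more explicit (the isomorphism is realized through the total complex) at the cost of spectral-sequence and comparison-theorem bookkeeping for naturality, which the $\delta$-functor argument gets for free. Two small points: your displayed ``short exact sequence'' in the freeness step is garbled --- the actual content is that vanishing of top-degree homology of $\varLambda(P,\mathcal{G})_{\leq x}$ forces $\partial$ to embed $\varLambda(P,\mathcal{G})_x$ into $\bigoplus_{y\lessdot x}\varLambda(P,\mathcal{G})_y$, which is free by induction, so the subgroup is free (this is exactly Lemma \ref{lem of form}(3), which you could simply cite); and in the first spectral sequence you should say a word on why the $E^1$-differential on $H_0(C_*(P,\mathcal{G};\mathcal{F}_p))\cong\mathcal{F}_p\otimes_P\mathcal{G}$ agrees with the map induced by $\mathcal{F}_p\to\mathcal{F}_{p-1}$, which follows from property (3) of Definition \ref{cellular form} together with Example \ref{ex yoneda}. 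Neither point is a genuine gap.
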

\begin{proof}
	Recall that all copresheaf is assumed to be free in this paper,	
	then $\varLambda(P,\mathcal{G})_x$ is also free for any $x\in P$ by Lemma \ref{lem of form}, so a short exact sequence $0\to\mathcal{F}'\to \mathcal{F}\to \mathcal{F}''\to 0$ induce a short exact sequence of cellular chain $0\to C_*(P,\mathcal{G};\mathcal{F}')\to C_*(P,\mathcal{G};\mathcal{F})\to C_*(P,\mathcal{G};\mathcal{F}'') \to 0$ and $\mathcal{F} \mapsto H_i(C_*(P,\mathcal{G};\mathcal{F}))$ is a $\delta$-functor on $\text{PSh}(P)$.
	$C_*(P,\mathcal{G};j_{x*}\underline{\mathbb{Z}})$ equals the chain $\varLambda(P,\mathcal{G})_{\leq x}$, so
	  $H_i(C_*(P,\mathcal{G};j_{x*}\underline{\mathbb{Z}}))=0$ for any $i>0$ by the definition of cellular form, then this $\delta$-functor is universal. 
	  
	  Then we only need to check that $H_0(C_*(P,\mathcal{G};\mathcal{F}))$ is natural isomorphic with $\mathcal{F} \otimes_P\mathcal{G}$. We firstly check the case of $\mathcal{F} = j_{x*}\underline{\mathbb{Z}}$. In this case, the required isomorphism is given by $j_{x*}\underline{\mathbb{Z}}\otimes_P\mathcal{G}\cong \mathcal{G}(x)$ and $\mathcal{G}(x) = H_0(\varLambda(P,\mathcal{G})_{\leq x}) = H_0(C_*(P,\mathcal{G};j_{x*}\underline{\mathbb{Z}}))$. For general $\mathcal{F}$, choose a resolution $\mathcal{F}_i\to \mathcal{F}$ in Lemma \ref{lem of resolution}, then these natural isomorphism $H_0(C_*(P,\mathcal{G};\mathcal{F}_i)) \cong \mathcal{F}_i \otimes_P\mathcal{G}$ yields a natural isomorphism $H_0(C_*(P,\mathcal{G};\mathcal{F})) \cong \mathcal{F} \otimes_P\mathcal{G}$.
\end{proof}
\begin{cor}\label{form to cellular}
	If $(P,\mathcal{G})$ has a cellular form $\varLambda(P,\mathcal{G})$, then $(P,\mathcal{G})$ is homological cellular, $$\text{Tor}^P_{i}(\delta_x\mathbb{Z},\mathcal{G})=\varLambda(P,\mathcal{G})_x$$ if $i=r(x)$ and be $0$ otherwise.
\end{cor}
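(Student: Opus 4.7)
The plan is to apply Theorem \ref{cellular homology} directly with the test presheaf $\mathcal{F} = \delta_x\mathbb{Z}$ and then observe that the resulting cellular chain complex is concentrated in a single degree, so its homology is trivial to read off.

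First, I would spell out the cellular chain $C_*(P,\mathcal{G};\delta_x\mathbb{Z})$ using Definition \ref{cellular chain}. By definition,
\[
C_i(P,\mathcal{G};\delta_x\mathbb{Z}) = \bigoplus_{y\in P,\, r(y)=i} \varLambda(P,\mathcal{G})_y \otimes (\delta_x\mathbb{Z})(y).
\]
Since $(\delta_x\mathbb{Z})(y) = \mathbb{Z}$ when $y=x$ and $0$ otherwise, the only nonzero summand appears in degree $i = r(x)$, where it equals $\varLambda(P,\mathcal{G})_x \otimes \mathbb{Z} = \varLambda(P,\mathcal{G})_x$. Consequently the complex $C_*(P,\mathcal{G};\delta_x\mathbb{Z})$ is concentrated in a single degree and the differential $d$ is forced to be zero (there is nothing in adjacent degrees to map into).

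Next, I would invoke Theorem \ref{cellular homology}, which gives the natural isomorphism $\text{Tor}^P_i(\delta_x\mathbb{Z},\mathcal{G}) \cong H_i(C_*(P,\mathcal{G};\delta_x\mathbb{Z}))$. Since $C_*(P,\mathcal{G};\delta_x\mathbb{Z})$ has trivial differential and lives only in degree $r(x)$, its homology is $\varLambda(P,\mathcal{G})_x$ in degree $r(x)$ and $0$ elsewhere, giving exactly the stated formula
\[
\text{Tor}^P_i(\delta_x\mathbb{Z},\mathcal{G}) \cong \begin{cases} \varLambda(P,\mathcal{G})_x & i = r(x)\\ 0 & i \neq r(x). \end{cases}
\]
The vanishing for $i \neq r(x)$ is precisely the homological cellular condition, so the first claim of the corollary follows as well.

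There is no real obstacle here: the entire content lies in the previous theorem, and the rest is a bookkeeping observation that $\delta_x\mathbb{Z}$ is supported on a single element of the poset, so the cellular chain collapses to one group in degree $r(x)$. The only thing worth being careful about is confirming that the definition of $\delta_x\mathbb{Z}$ (constant value $\mathbb{Z}$ on $\{x\}$, zero elsewhere) makes sense as a presheaf and that the freeness assumption required for Theorem \ref{cellular homology} is in force throughout (which it is, since every copresheaf in this section is assumed free and $\varLambda(P,\mathcal{G})_x$ is free by the lemma cited in the proof of Theorem \ref{cellular homology}).
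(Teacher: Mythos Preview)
Your proof is correct and follows exactly the same approach as the paper: compute the cellular chain $C_*(P,\mathcal{G};\delta_x\mathbb{Z})$, observe it is concentrated in the single degree $r(x)$ with value $\varLambda(P,\mathcal{G})_x$, and invoke Theorem~\ref{cellular homology}. The paper's proof is simply a more compressed version of what you wrote.
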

\begin{proof}
	$C_{r(x)}(P,\mathcal{G};\delta_x\mathbb{Z})=\varLambda(P,\mathcal{G})_x$ and be zero otherwise by definition of cellular chain, so $\text{Tor}^P_{r(x)}(\delta_x\mathbb{Z},\mathcal{G})=\varLambda(P,\mathcal{G})_x$ and be zero otherwise by Theorem \ref{cellular homology}.
\end{proof}

\subsection{Relation with canonical filtration}

\begin{defn}
	For $P$ a graded poset and $\mathcal{F}$ a presheaf on $P$, define the canonical filtration (by grading) of $\mathcal{F}$ be a sequence $F_0\mathcal{F} \hookrightarrow F_1\mathcal{F} \hookrightarrow ...$ such that $F_i\mathcal{F}$  equals $\mathcal{F}$ on $P_{\leq i}$ and zero otherwise.
\end{defn}

For any fixed copresheaf $\mathcal{G}$, above filtration induce a spectral sequence which converges to $\text{Tor}^P_{*}(\mathcal{F},\mathcal{G})$, denoted by ${}_{\mathcal{F}}E^i_{p,q}$.
Notice that $F_i\mathcal{F}/F_{i-1}\mathcal{F}=\bigoplus\limits_{x\in P,r(x)=p}\delta_x \mathcal{F}(x)$, then the $E^1$  page satisfying $${}_{\mathcal{F}}E^1_{p,q} = \bigoplus_{x\in P,r(x)=p}\text{Tor}^P_{p+q}(\delta_x \mathcal{F}(x),\mathcal{G})$$ If $(P,\mathcal{G})$ is homological cellular, ${}_{\mathcal{F}}E^1_{p,0} = \bigoplus_{x\in P,r(x)=p}\text{Tor}^P_{p}(\delta_x \mathcal{F}(x),\mathcal{G})$ and zero for other $q\neq 0$, then this spectral sequence collapse on $E^2$  and we have a natural isomorphism ${}_{\mathcal{F}}E^2_{p,0} \cong \text{Tor}^P_{p}(\mathcal{F},\mathcal{G})$. 

The canonical filtration of constant presheaf $\underline{\mathbb{Z}}$ is special, $${}_{\underline{\mathbb{Z}}}E^1_{p,0} = \bigoplus_{x\in P,r(x)=p}\text{Tor}^P_{p}(\delta_x \mathbb{Z},\mathcal{G})$$

\begin{lem}\label{ex form}
	If $(P,\mathcal{G})$ is homological cellular, then ${}_{\underline{\mathbb{Z}}}E^1_{*,0}$ (with differential $d_1$ of this page) is a cellular form of $(P,\mathcal{G})$.
\end{lem}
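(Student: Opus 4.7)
The plan is to verify the three conditions of Definition \ref{cellular form} for the $P$-graded module with pieces $\varLambda_x := \text{Tor}^P_{r(x)}(\delta_x\mathbb{Z},\mathcal{G})$, which under the homological-cellular hypothesis reproduces ${}_{\underline{\mathbb{Z}}}E^1_{*,0}$ exactly, equipped with the induced differential $d_1$.

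The bulk of the work is condition (1): showing that $d_1$ carries $\varLambda_x$ into $\bigoplus_{y\lessdot x}\varLambda_y$ rather than into the full rank-$(r(x)-1)$ piece. The differential $d_1$ arises as the connecting map of the Tor long exact sequence of
$$0 \to F_{p-1}\underline{\mathbb{Z}}/F_{p-2}\underline{\mathbb{Z}} \to F_p\underline{\mathbb{Z}}/F_{p-2}\underline{\mathbb{Z}} \to F_p\underline{\mathbb{Z}}/F_{p-1}\underline{\mathbb{Z}} \to 0.$$
For each $x$ of rank $p$ I will introduce the subpresheaf $\mathcal{A}_x$ of the middle term supported on $T_x := \{x\}\cup\{y : y\lessdot x\}$. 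Gradedness of $P$ is exactly what makes $\mathcal{A}_x$ a well-defined subpresheaf, since any $y<x$ of rank $p-1$ is automatically a cover of $x$, so every restriction out of $x$ in $F_p\underline{\mathbb{Z}}/F_{p-2}\underline{\mathbb{Z}}$ lands inside $T_x$. This yields a sub-extension
$$0\to\bigoplus_{y\lessdot x}\delta_y\mathbb{Z}\to\mathcal{A}_x\to\delta_x\mathbb{Z}\to 0$$
mapping into the ambient one, and naturality of the Tor boundary then forces $d_1|_{\varLambda_x}$ to factor through $\bigoplus_{y\lessdot x}\varLambda_y$.

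For conditions (2) and (3) I will rerun the canonical-filtration construction with $j_{x*}\underline{\mathbb{Z}} = \delta_{P_{\leq x}}\mathbb{Z}$ in place of $\underline{\mathbb{Z}}$. Its associated graded is $\bigoplus_{y\leq x,\,r(y)=p}\delta_y\mathbb{Z}$, so ${}_{j_{x*}\underline{\mathbb{Z}}}E^1_{p,0}$ is precisely the rank-$p$ piece of $\varLambda_{\leq x}$. The inclusion $j_{x*}\underline{\mathbb{Z}}\hookrightarrow\underline{\mathbb{Z}}$ induces a morphism of spectral sequences whose $E^1_{*,0}$-map is the inclusion $\varLambda_{\leq x}\hookrightarrow\bigoplus_y\varLambda_y$ and intertwines the two $d_1$'s, so $(\varLambda_{\leq x},d_1)$ is identified with the local $E^1_{*,0}$-row. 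The homological-cellular hypothesis concentrates the spectral sequence in the $q=0$ row, so it collapses at $E^2$ and gives
$$H_p(\varLambda_{\leq x},d_1)\cong\text{Tor}^P_p(j_{x*}\underline{\mathbb{Z}},\mathcal{G}),$$
which by Lemma \ref{lem of resolution} (projectivity of $j_{x*}\underline{\mathbb{Z}}$) and Example \ref{ex yoneda} ($j_{x*}\underline{\mathbb{Z}}\otimes_P\mathcal{G}=\mathcal{G}(x)$) equals $\mathcal{G}(x)$ for $p=0$ and vanishes otherwise. This delivers (2) and (3) simultaneously, and naturality in $x$ of $H_0(\varLambda_{\leq x})\cong\mathcal{G}(x)$ comes for free from the functoriality of the spectral sequence under the inclusions $j_{x*}\underline{\mathbb{Z}}\hookrightarrow j_{x'*}\underline{\mathbb{Z}}$ for $x\leq x'$.

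The hard part will be the subpresheaf verification for $\mathcal{A}_x$ in step (1): one must check at every pair $z\leq z'$ that the restriction of $F_p\underline{\mathbb{Z}}/F_{p-2}\underline{\mathbb{Z}}$ respects the support $T_x$. Once this support-on-covers property of $d_1$ is secured by naturality, conditions (2) and (3) reduce to a routine collapsing argument applied to a projective presheaf.
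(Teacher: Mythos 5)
Your proposal is correct, and its main body coincides with the paper's own argument: exactly as in the paper, you run the canonical filtration on $j_{x*}\underline{\mathbb{Z}}$, note that the inclusion $j_{x*}\underline{\mathbb{Z}}\hookrightarrow\underline{\mathbb{Z}}$ induces on $E^1_{*,0}$ the inclusion $\varLambda(P,\mathcal{G})_{\leq x}\hookrightarrow\varLambda(P,\mathcal{G})$ compatibly with $d_1$, use homological cellularity to collapse the local spectral sequence, and identify $H_p(\varLambda(P,\mathcal{G})_{\leq x})$ with $\text{Tor}^P_p(j_{x*}\underline{\mathbb{Z}},\mathcal{G})$, which vanishes for $p>0$ by projectivity and equals $\mathcal{G}(x)$ for $p=0$ with the correct extension maps via Example \ref{ex yoneda}. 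Where you diverge is condition (1) of Definition \ref{cellular form}: you verify the covers-only support of $d_1$ directly, by exhibiting the subpresheaf $\mathcal{A}_x$ of $F_p\underline{\mathbb{Z}}/F_{p-2}\underline{\mathbb{Z}}$ supported on $\{x\}\cup\{y\,:\,y\lessdot x\}$ and invoking naturality of the Tor connecting homomorphism for the sub-extension $0\to\bigoplus_{y\lessdot x}\delta_y\mathbb{Z}\to\mathcal{A}_x\to\delta_x\mathbb{Z}\to 0$. This argument is valid (gradedness does make $\mathcal{A}_x$ a subpresheaf, and $d_1$ is indeed the connecting map of the triple $(F_p,F_{p-1},F_{p-2})$), but it is redundant: the spectral-sequence comparison you already use for conditions (2) and (3) is precisely how the paper obtains (1) as well, since the $E^1$-map is the inclusion of $\varLambda(P,\mathcal{G})_{\leq x}$, it intertwines $d_1$, and $d_1$ lowers rank by exactly one, so $d_1(\varLambda(P,\mathcal{G})_x)\subseteq\bigoplus_{y\leq x,\,r(y)=r(x)-1}\varLambda(P,\mathcal{G})_y=\bigoplus_{y\lessdot x}\varLambda(P,\mathcal{G})_y$. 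So your extra construction buys a more explicit, hands-on identification of $d_1$ on each graded piece, at the cost of duplicating work the comparison already does; the paper's route is shorter because it extracts all three conditions from the single morphism of spectral sequences.
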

\begin{proof}
	We define $\varLambda(P,\mathcal{G})_x$ to be the term $\text{Tor}^P_{r(x)}(\delta_x \mathbb{Z},\mathcal{G})$ in $E^1$ page.
	
	For any $x\in P$, consider the presheaf $j_{x*}\underline{\mathbb{Z}}$ and the canonical filtration on it, abbreviate the associated spectral sequence as ${}_xE^i_{p,q}$, the inclusion $j_{x*}\underline{\mathbb{Z}} \hookrightarrow \underline{\mathbb{Z}}$ preserve the canonical filtration and so induce a morphism of spectral sequence, it's easy to check this morphism of $E^1$ page is inclusion $${}_xE^1_{p,0} = \bigoplus\limits_{y\leq x,r(y)=p}\varLambda(P,\mathcal{G})_y \hookrightarrow {}_{\underline{\mathbb{Z}}}E^1_{p,0}=\bigoplus\limits_{y \in P,r(y)=p}\varLambda(P,\mathcal{G})_y$$ Then  $\varLambda(P,\mathcal{G})_{\leq x}$ is always a sub-chain of $\varLambda(P,\mathcal{G})$. The homology $H_i(\varLambda(P,\mathcal{G})_{\leq x})$ is natural isomorphic with $\text{Tor}^P_i(j_{x*}\underline{\mathbb{Z}},\mathcal{G})$ as we discussed above, then $H_i(\varLambda(P,\mathcal{G})_{\leq x})=0$ for any positive $i$ since $j_{x*}\underline{\mathbb{Z}}$ is projective, $H_0(\varLambda(P,\mathcal{G})_{\leq x}) = j_{x*}\underline{\mathbb{Z}} \otimes_P \mathcal{G} = \mathcal{G}(x)$,  the map $H_0(\varLambda(P,\mathcal{G})_{\leq x}) \to  H_0(\varLambda(P,\mathcal{G})_{\leq y})$ induced by inclusion of $E^1$ coincide with extension map $\mathcal{G}(x)\to \mathcal{G}(y)$ by our discussion in Example \ref{ex yoneda} .
	
\end{proof}

Combine Lemma \ref{ex form} and Corollary \ref{form to cellular}, we obtain
\begin{cor}
	$(P,\mathcal{G})$ is homological cellular if and only if $(P,\mathcal{G})$ has a cellular form $\varLambda(P,\mathcal{G})$.
\end{cor}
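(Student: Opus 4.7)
The plan is to observe that both directions of this equivalence have already been established in the preceding results, so the proof reduces to assembling them.

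For the "if" direction, I would assume that $(P,\mathcal{G})$ admits a cellular form $\varLambda(P,\mathcal{G})$. Then Corollary \ref{form to cellular} directly gives $\text{Tor}^P_i(\delta_x\mathbb{Z},\mathcal{G}) \cong \varLambda(P,\mathcal{G})_x$ when $i=r(x)$ and zero otherwise. In particular $\text{Tor}^P_i(\delta_x\mathbb{Z},\mathcal{G})=0$ for all $i \neq r(x)$, which is exactly the defining condition of $(P,\mathcal{G})$ being homological cellular. The underlying input here is Theorem \ref{cellular homology}, applied to the presheaf $\mathcal{F}=\delta_x\mathbb{Z}$, since $C_i(P,\mathcal{G};\delta_x\mathbb{Z})$ has the single nonzero term $\varLambda(P,\mathcal{G})_x$ concentrated in degree $r(x)$.

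For the "only if" direction, I would assume that $(P,\mathcal{G})$ is homological cellular and invoke Lemma \ref{ex form}, which constructs a concrete cellular form directly from the canonical filtration on the constant presheaf $\underline{\mathbb{Z}}$: namely, the bottom row ${}_{\underline{\mathbb{Z}}}E^1_{*,0}$ of the associated spectral sequence, equipped with the induced differential $d_1$. The verification that this satisfies the three axioms of Definition \ref{cellular form} was carried out in the proof of Lemma \ref{ex form} by comparing with the spectral sequence associated to $j_{x*}\underline{\mathbb{Z}}$ and using Example \ref{ex yoneda} to identify $H_0$ with $\mathcal{G}$.

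Since both implications reduce to citations, there is no genuine obstacle in this corollary itself; the substantive content lies in Theorem \ref{cellular homology} (and hence in the universal $\delta$-functor argument) and in Lemma \ref{ex form} (and hence in the functoriality of the canonical filtration spectral sequence under $j_{x*}\underline{\mathbb{Z}}\hookrightarrow \underline{\mathbb{Z}}$). Thus I would present the proof as a short two-line paragraph that simply cites Corollary \ref{form to cellular} for one direction and Lemma \ref{ex form} for the other.
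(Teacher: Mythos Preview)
Your proposal is correct and matches the paper's own approach exactly: the paper introduces this corollary with the line ``Combine Lemma \ref{ex form} and Corollary \ref{form to cellular}, we obtain'' and gives no further proof, so citing Corollary \ref{form to cellular} for the ``if'' direction and Lemma \ref{ex form} for the ``only if'' direction is precisely what is intended.
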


\vskip 1em
\subsection{Morphism of cellular form}~
\vskip .5em
As we have discussed, a $f$-homomorphism induce map of homology, we will show that this map can be calculated by morphism of cellular form if some good condition holds.

Let $f: P \to Q $ be order preserving map of graded poset and do not increase the grading, i.e.,
\begin{equation}\label{cond dim}
r(f(x))\leq r(x)
\end{equation}
$t:\mathcal{G} \rightsquigarrow \mathcal{H}$ be a $f$-homomorphism of copresheaf, $(P,\mathcal{G})$ and $(Q,\mathcal{H})$ have cellular form $\varLambda(P,\mathcal{G}),\varLambda(Q,\mathcal{H})$ respectively.
\begin{defn}[morphism of cellular form]\label{map form}
	With above assumption, we define a map $\varPhi : \varLambda(P,\mathcal{G}) \to \varLambda(Q,\mathcal{H})$ to be a morphism of cellular form , associated with $f,t$,  if it satisfying 
	\begin{enumerate}
		\item $\varPhi$ maps $\varLambda(P,\mathcal{G})_x$ to $\varLambda(Q,\mathcal{H})_{f(x)}$, be commutative with $\partial$ if $r(f(x))=r(x)$.
		\item $\varPhi$ maps $\varLambda(P,\mathcal{G})_x$ to $0$ if $r(f(x))<r(x)$.
		\item $\varPhi$ equals $t$ on $\varLambda(P,\mathcal{G})_{x_0}$ for $r(x_0)=0$.	
	\end{enumerate}
\end{defn}

\begin{thm}\label{form map}
   With the same assumption, there exists a unique morphism $\varPhi$ of cellular form associated with $f,t$ and $\varPhi$ is commutative with differential on any $\varLambda(P,\mathcal{G})_x$ whatever $r(f(x))=r(x)$ or not.
\end{thm}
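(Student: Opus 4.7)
The plan is to construct $\varPhi$ by induction on the rank $n:=r(x)$ of $x\in P$, using the acyclicity built into the cellular form $\varLambda(Q,\mathcal{H})$ to perform the required lifts and to force uniqueness. The base case $r(x)=0$ is immediate: by Remark \ref{form rem} combined with $r(f(x))\leq 0$, both $\varLambda(P,\mathcal{G})_x$ and $\varLambda(Q,\mathcal{H})_{f(x)}$ reduce to $\mathcal{G}(x)$ and $\mathcal{H}(f(x))$ respectively, and condition (3) of Definition \ref{map form} forces $\varPhi|_{\varLambda(P,\mathcal{G})_x}=t_x$.

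For the inductive step, assume $\varPhi$ is already defined on every $\varLambda(P,\mathcal{G})_y$ with $r(y)<n$ and commutes with $\partial$ on those pieces. For $a\in\varLambda(P,\mathcal{G})_x$, the element $\varPhi(\partial a)$ is then already defined, it lies inside $\varLambda(Q,\mathcal{H})_{\leq f(x)}$ because $f$ preserves order, and it is a cycle because $\partial\varPhi(\partial a)=\varPhi(\partial^2 a)=0$. I would split into cases by $k:=r(f(x))$. When $k=n$, the cycle $\varPhi(\partial a)$ sits in degree $n-1$ of $\varLambda(Q,\mathcal{H})_{\leq f(x)}$; for $n\geq 2$ the cellular form axiom $H_{n-1}(\varLambda(Q,\mathcal{H})_{\leq f(x)})=0$ produces a lift, which must live in the top piece $\varLambda(Q,\mathcal{H})_{f(x)}$ because in a graded poset $z\leq f(x)$ with $r(z)=n$ forces $z=f(x)$. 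The lift is unique because the degree $n+1$ piece of $\varLambda(Q,\mathcal{H})_{\leq f(x)}$ vanishes while $H_n=0$, so every degree-$n$ cycle is trivial. For $n=1$ the lift exists precisely when the class of $\varPhi(\partial a)$ in $\mathcal{H}(f(x))=H_0(\varLambda(Q,\mathcal{H})_{\leq f(x)})$ vanishes; a short computation using the naturality of the $f$-homomorphism $t$ reduces this class to $t_x([\partial a])$, and $[\partial a]=0$ in $\mathcal{G}(x)=H_0(\varLambda(P,\mathcal{G})_{\leq x})$ since $\partial a$ is a boundary.

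When $k<n$, condition (2) of Definition \ref{map form} forces $\varPhi(a)=0$, so the substantive task is to verify $\varPhi(\partial a)=0$. If $k<n-1$, every cover $y\lessdot x$ satisfies $r(f(y))\leq k<r(y)$, so $\varPhi(a_y)=0$ by induction and the sum is trivially zero. The delicate case is $k=n-1$: only those covers $y$ with $f(y)=f(x)$ contribute (since $f(y)\leq f(x)$ with $r(f(y))=n-1=r(f(x))$ forces equality), producing a cycle of degree $n-1$ in $\varLambda(Q,\mathcal{H})_{\leq f(x)}$. For $n\geq 2$ the degree-$n$ piece of $\varLambda(Q,\mathcal{H})_{\leq f(x)}$ is empty (since $r(z)\leq k=n-1$ for all $z\leq f(x)$), so the vanishing of $H_{n-1}$ forces the cycle itself to be zero; for $n=1$ the same naturality argument used in Case 1 again yields $\varPhi(\partial a)=t_x([\partial a])=0$. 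With $\varPhi(a)=0$ one then gets $\partial\varPhi(a)=0=\varPhi(\partial a)$ automatically, which establishes the stronger $\partial$-commutativity on every piece.

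The main obstacle is the subcase $k=n-1$, where one must show that the a priori nonzero contributions $\varPhi(a_y)$ from covers with $f(y)=f(x)$ in fact sum to zero. This is the one place that simultaneously uses the acyclicity axiom of the cellular form, the naturality of the $f$-homomorphism $t$, and the order-preserving property of $f$; once it is handled, existence, uniqueness, and the full $\partial$-commutativity follow by routine bookkeeping.
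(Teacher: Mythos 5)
Your proposal is correct and follows essentially the same route as the paper: induction on rank with the case split $r(f(x))=r(x)$, $r(f(x))=r(x)-1$, $r(f(x))<r(x)-1$, defining $\varPhi$ on the top case as the unique $\partial$-preimage (using acyclicity/injectivity of $\partial$ on $\varLambda(Q,\mathcal{H})_{f(x)}$) and killing the boundary contributions in the degenerate cases via the compatibility of $t$ with extension maps. Your phrasing of the edge cases in terms of vanishing $H_0$-classes is just a repackaging of the paper's kernel/commuting-triangle argument, so there is nothing substantively new or missing.
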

\begin{proof}
	
	We prove it by induction on rank. In the case of rank zero, this function is already uniquely defined by property (3).
	
	For $r(x)=1$, $r(f(x))=1$, $\partial$ maps $\varLambda(P,\mathcal{G})_{x}$ isomorphically on $\ker(\bigoplus_{y<x}\mathcal{G}(y) \to \mathcal{G}(x))$ by Lemma \ref{lem of form} and it is similar for $\partial$ on $\varLambda(Q,\mathcal{H})_{f(x)}$, $\Phi=t$ (on rank zero element) maps $\ker(\bigoplus_{y<x}\mathcal{G}(y) \to \mathcal{G}(x))$ to $\ker(\bigoplus_{y'<f(x)}\mathcal{H}(y') \to \mathcal{H}(f(x)))$ since $t$ is compatible with extension map of copresheaf. Then $\varPhi$ is uniquely defined by $\varPhi=\partial^{-1}\varPhi\partial$ on $\varLambda(P,\mathcal{G})_{x}$.
	
	For $r(x)=1$, $r(f(x))=0$, $a\in \varLambda(P,\mathcal{G})_{x}$, we need to check $\varPhi\partial(a)=0$ in this case. Notice that $f(y) = f(x)$ for any $y<x$ since $f$ preserving order, then we have following diagram
	$$\xymatrix{
		\mathcal{G}_x \ar[dr]^t\\
		\bigoplus\limits_{y<x,r(y)=0}\mathcal{G}_y \ar[u] \ar[r]^-{\varPhi=t}& \mathcal{H}_{f(x)}
	}$$		
	$\partial(a) \in \ker(\bigoplus\limits_{y<x,r(y)=0}\mathcal{G}_y \to \mathcal{G}_x)$ by Lemma \ref{lem of form}, then $\varPhi\partial(a)=0$ by the diagram.
	
	For $r(x)>1$, $a\in \varLambda(P,\mathcal{G})_{x}$, assume that $\varPhi$ has already uniquely defined for elements with lower rank and be commutative with $\partial$. If $r(f(x))=r(x)$, we have commutative diagram 
	$$\xymatrix{
		\varLambda(P,\mathcal{G})_x \ar[d]^{\partial}& \varLambda(Q,\mathcal{H})_{f(x)}\ar[d]^{\partial}\\
		\bigoplus_{y\lessdot x} \varLambda(P,\mathcal{G})_{y} \ar[r]^{\Phi} \ar[d]^{\partial} &\bigoplus_{y'\lessdot f(x)}\varLambda(Q,\mathcal{H})_{y'}\ar[d]^{\partial}\\
		\bigoplus_{y<x,r(y)=r(x)-2} \varLambda(P,\mathcal{G})_{y} \ar[r]^-{\Phi} & \bigoplus_{y'< f(x),r(y')=r(x)-2}\varLambda(Q,\mathcal{H})_{y'}
	}$$ by induction hypothesis, every column is exact by definition of cellular form, so $\varPhi$ is uniquely defined by $\varPhi=\partial^{-1}\varPhi\partial$ on term $\varLambda(P,\mathcal{G})_x$. We also need to check that $\varPhi\partial(a) =0$ for $r(f(x))< r(x)$.  If $r(f(x))< r(x)-1$, then $r(f(y))<r(y)$ for any $y\lessdot x$ and $\varPhi\partial(a)=0$ by property (2) of Definition \ref{map form}. If $r(f(x))=r(x)-1$, $\varPhi\partial(a) \in \varLambda(Q,\mathcal{H})_{f(x)}$ by property (1) (2) in Definition \ref{map form}, $\partial\varPhi\partial(a) =\varPhi\partial\partial(a)=0$ by induction hypothesis, $\partial$ on  $\varLambda(Q,\mathcal{H})_{f(x)}$ is injective by definition of cellular form , so $\varPhi\partial(a) =0$. This completes the proof.
	
\end{proof}

\begin{rem}
	Any morphism $\varPhi$ of cellular form is  commutative with $\partial$ by above theorem, although the commutativity is only required for the case of $r(x)=r(f(x))$ in Definition \ref{map form}. This is a useful trick when we construct a morphism of cellular form, we only need to check the commutativity on elements satisfying $r(x)=r(f(x))$. 
\end{rem}

\begin{cor}\label{form unique}
	The cellular form is unique if it exists.
\end{cor}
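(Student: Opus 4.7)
The plan is to derive the uniqueness of the cellular form as an immediate consequence of the uniqueness part of Theorem \ref{form map}, applied with $f = \id_P$ and $t = \id_{\mathcal{G}}$.

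Suppose $\varLambda(P,\mathcal{G})$ and $\varLambda'(P,\mathcal{G})$ are two cellular forms of the same pair $(P,\mathcal{G})$. First I would verify that the hypotheses of Theorem \ref{form map} are met for $f = \id_P$ and $t = \id_{\mathcal{G}}$: the identity trivially preserves order and rank, and $\id_{\mathcal{G}}$ is a (trivial) $\id_P$-homomorphism. Applying the theorem twice gives morphisms of cellular form $\varPhi : \varLambda(P,\mathcal{G}) \to \varLambda'(P,\mathcal{G})$ and $\varPsi : \varLambda'(P,\mathcal{G}) \to \varLambda(P,\mathcal{G})$ associated with $(\id_P, \id_{\mathcal{G}})$.

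Next I would consider the compositions $\varPsi \circ \varPhi$ and $\varPhi \circ \varPsi$. One checks directly that each composition is again a morphism of cellular form associated with $(\id_P, \id_{\mathcal{G}})$: property (1) follows since both $\varPhi$ and $\varPsi$ preserve grading (because $r(\id(x)) = r(x)$, the rank-strict-drop clause is vacuous) and each is commutative with $\partial$ by Theorem \ref{form map}; property (2) is vacuous; and property (3) holds because on any rank zero element $x_0$ both $\varPhi$ and $\varPsi$ restrict to the identity on $\varLambda(P,\mathcal{G})_{x_0} = \mathcal{G}(x_0) = \varLambda'(P,\mathcal{G})_{x_0}$ (using the identification in Remark \ref{form rem}). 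The identity map on each cellular form is visibly such a morphism as well, and by the uniqueness part of Theorem \ref{form map} the compositions must coincide with the identities. Therefore $\varPhi$ and $\varPsi$ are mutually inverse chain isomorphisms, and $\varLambda(P,\mathcal{G}) \cong \varLambda'(P,\mathcal{G})$ as $P$-graded differential modules canonically identifying the rank zero pieces with $\mathcal{G}$.

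There is really no obstacle here beyond unpacking the three axioms of Definition \ref{map form} for the identity situation; the whole weight of the argument has been absorbed into Theorem \ref{form map}. The only minor point to be careful about is the identification in Remark \ref{form rem} used to make sense of property (3) for the identity morphism on two a priori different cellular forms sharing the same $(P,\mathcal{G})$.
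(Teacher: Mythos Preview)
Your proof is correct and follows exactly the same route as the paper: apply Theorem \ref{form map} with $f=\id_P$, $t=\id_{\mathcal{G}}$ to obtain morphisms in both directions, observe that the compositions are again morphisms of cellular form associated with the identity data, and invoke the uniqueness clause of Theorem \ref{form map} to conclude they are the identity. Your version simply spells out the verification of the axioms of Definition \ref{map form} for the compositions in more detail than the paper does.
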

\begin{proof}
	Let $\varLambda,\varLambda'$ be two cellular form of pair $(P,\mathcal{G})$,  then there is morphism of cellular form $\varPhi : \varLambda \to \varLambda'$ and $\varPhi' : \varLambda' \to \varLambda$ associated with identity map of $P,\mathcal{G}$. $\varPhi\varPhi',\varPhi'\varPhi$ is also morphism of cellular form associated with identity, then $\varPhi\varPhi'=\id,\varPhi'\varPhi=\id$ by uniqueness of morphism of cellular form.
\end{proof}

We construct cellular chain map by the morphism of cellular form.

\begin{defn}
	With the same assumption, let $\mathcal{F},\mathcal{E}$ be presheaf on $P,Q$ and $k: \mathcal{F} \rightsquigarrow \mathcal{E}$ be a $f$-homomophism of presheaves, define $k_{cell}: C_i(P,\mathcal{G};\mathcal{F}) \to C_i(Q,\mathcal{H};\mathcal{E})$ by $$k_{cell}(a \otimes c) = \varPhi(a) \otimes k(c)$$ where $a\in \varLambda(P,\mathcal{G})_x$, $c\in  \mathcal{F}(x)$, $r(x)=i$, $\varPhi$ is the morphism of cellular form associated with $f,t$.
\end{defn}

\begin{lem}
	$k_{cell}$ in above definition is a chain map, i.e., commutative with $d$.
\end{lem}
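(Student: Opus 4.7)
The plan is to unwind the definitions of $d$ and $k_{cell}$ on a pure tensor $a\otimes c \in \varLambda(P,\mathcal{G})_x \otimes \mathcal{F}(x)$ (with $r(x)=i$), express both $d\,k_{cell}(a\otimes c)$ and $k_{cell}\,d(a\otimes c)$ in terms of $\partial a$, $\partial\varPhi(a)$, and the restrictions of $c$ and $k(c)$, and then invoke two facts: that $k$ is an $f$-homomorphism (so $k(c|_{x'}) = k(c)|_{f(x')}$), and that $\varPhi$ commutes with $\partial$ on every graded piece $\varLambda(P,\mathcal{G})_x$, including the pieces where $r(f(x))<r(x)$. This last fact is exactly the extra content of Theorem \ref{form map} beyond Definition \ref{map form}, so it is precisely the hypothesis that will do the work.

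Concretely, write $\partial a = \sum_j a_j$ with $a_j \in \varLambda(P,\mathcal{G})_{x_j}$ and $x_j \lessdot x$. Then
$k_{cell}(d(a\otimes c)) = \sum_j \varPhi(a_j) \otimes k(c|_{x_j}) = \sum_j \varPhi(a_j) \otimes k(c)|_{f(x_j)}$.
A term here vanishes unless $r(f(x_j)) = r(x_j) = i-1$, by property (2) of Definition \ref{map form}. I would then case-split on whether $r(f(x)) = r(x)$ or $r(f(x)) < r(x)$.

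In the equal-rank case, $\varPhi(a) \in \varLambda(Q,\mathcal{H})_{f(x)}$ lies in degree $i$, and $d(\varPhi(a)\otimes k(c)) = \sum_l b_l \otimes k(c)|_{y_l}$ where $\partial\varPhi(a)=\sum_l b_l$ with $y_l \lessdot f(x)$. Using Theorem \ref{form map}, $\partial\varPhi(a) = \varPhi\partial(a) = \sum_j \varPhi(a_j)$; the nonvanishing terms have $r(f(x_j)) = i-1$, and since $f(x_j) \leq f(x)$ in a graded poset with rank difference one, this forces $f(x_j) \lessdot f(x)$. Matching the two expressions termwise then gives equality.

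The main obstacle is the rank-dropping case $r(f(x)) < r(x)$, where the right side vanishes trivially because $\varPhi(a)=0$ kills $k_{cell}(a\otimes c)$, but the left side is not obviously zero. Here one observes that any nonzero summand on the left requires $r(f(x_j)) = i-1$; combined with $f(x_j) \leq f(x)$ and $r(f(x)) \leq i-1$, this forces $r(f(x)) = i-1$ and $f(x_j) = f(x)$ for all contributing $j$. All surviving terms therefore carry the same second tensor factor $k(c)|_{f(x)}$, so one can factor and obtain $\bigl(\sum_j \varPhi(a_j)\bigr) \otimes k(c)|_{f(x)} = \varPhi(\partial a) \otimes k(c)|_{f(x)} = \partial\varPhi(a) \otimes k(c)|_{f(x)} = 0$, again by the full commutativity statement of Theorem \ref{form map}. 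This finishes the proof, and it is exactly the step where the strengthened conclusion of Theorem \ref{form map} is indispensable.
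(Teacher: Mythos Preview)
Your proof is correct and follows essentially the same approach as the paper's: both compute $k_{cell}\,d(a\otimes c)$ using the $f$-homomorphism property of $k$, case-split on whether $r(f(x))=r(x)$, and invoke the full commutativity $\varPhi\partial=\partial\varPhi$ from Theorem~\ref{form map} to finish. The only cosmetic difference is that the paper first groups the $x_j$ by their distinct images $y_1,\dots,y_s$ under $f$ and then argues each block sum $\sum_{i\in I_j}\varPhi(a_i)$ either vanishes or matches the corresponding component $b_j$ of $\partial\varPhi(a)$, whereas you handle the rank-dropping case by observing that all surviving $f(x_j)$ coincide with $f(x)$ and factoring out the common tensor factor; both arguments are equivalent.
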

\begin{proof}
	Assume an element $a\otimes c \in C_*(P,\mathcal{G};\mathcal{F})$ where $a \in \varLambda(P,\mathcal{G})_{x}$, $c \in \mathcal{F}(x)$. Let $\partial a= \sum_i a_i$ where $a_i \in \varLambda(P,\mathcal{G})_{x_i}$ for some $x_i\lessdot x$, $i\in [1,n]$. Assume $f$ maps these $x_i$ to $s$-distinct point $y_1,y_2,...,y_s\in Q$, it makes a partition $I=\{I_1,I_2,...I_s\}$ of $[1,n]$ such that $f(x_i)=y_k \Leftrightarrow x_i\in I_k$. We have
	\begin{align*}
		k_{cell}d(a \otimes c) &= k_{cell}\sum_i a_i \otimes c|_{x_i}\\
		&= \sum_i \varPhi(a_i) \otimes k(c|_{x_i})\\
		&= \sum_j (\sum_{i\in I_j} \varPhi(a_i)) \otimes k(c)|_{y_j}
	\end{align*}

    Now for $r(f(x))<r(x)$, $\sum_{i\in I_j} \varPhi(a_i) =0 $ since $\varPhi\partial a = \partial\varPhi a =0$, then we have the required equation.
    
    For $r(f(x))=r(x)$, assume $\partial\varPhi(a) = \sum_{j\in [1,s]} b_j$ where $b_j \in \varLambda(Q,\mathcal{H})_{y_j}$, Then $\partial\varPhi(a) = \varPhi\partial a \Rightarrow \sum_{i\in I_j} \varPhi(a_i) = b_j $ and $k_{cell}d(a \otimes c) = \sum_j b_j \otimes k(c)|_{y_j} = d\circ k_{cell}(a \otimes c)$
    
\end{proof}
\begin{thm}\label{cc map}
	The map $k_{cell}$ induce a map of homology, denoted by $k_{cell,*}$,  coincide with $\text{Tor}^f_n(k,t)$ in Lemma \ref{f-hom}. 
\end{thm}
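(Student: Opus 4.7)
The plan is to invoke the uniqueness part of Theorem \ref{f-hom}: any natural family of maps $\text{Tor}^P_n(\mathcal{F},\mathcal{G})\to\text{Tor}^Q_n(\mathcal{E},\mathcal{H})$ that (a) restricts in degree zero to the map $\mathcal{F}\otimes_P\mathcal{G}\to\mathcal{E}\otimes_Q\mathcal{H}$ induced by $k$ and $t$, and (b) commutes with the connecting homomorphisms attached to short exact sequences in the first variable, must coincide with $\text{Tor}^f_n(k,t)$. So I would verify these two properties for $k_{cell,*}$ and then appeal to uniqueness.

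For (a), the key is to track the isomorphism $H_0(C_*(P,\mathcal{G};\mathcal{F}))\cong\mathcal{F}\otimes_P\mathcal{G}$ produced in the proof of Theorem \ref{cellular homology}. That isomorphism was first built for $\mathcal{F}=j_{x*}\underline{\mathbb{Z}}$ via $\mathcal{G}(x)\cong H_0(\varLambda(P,\mathcal{G})_{\leq x})$, and then extended by right exactness in $\mathcal{F}$. On a rank-zero generator $a\otimes c\in C_0$ we have $k_{cell}(a\otimes c)=\varPhi(a)\otimes k(c)$, and condition (3) of Definition \ref{map form} forces $\varPhi$ to equal $t$ on rank-zero summands. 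Hence for $\mathcal{F}=j_{x*}\underline{\mathbb{Z}}$, $k_{cell,*}$ pushed through the identification is exactly the map $\mathcal{G}(x)\to\mathcal{H}(f(x))$ coming from $t$, which agrees with the map on tensor products induced by $k$ and $t$. Right exactness of both sides in $\mathcal{F}$ then propagates the agreement to arbitrary $\mathcal{F}$ via a resolution as in Lemma \ref{lem of resolution}.

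For (b), the pertinent observation is that each $\varLambda(P,\mathcal{G})_x$ is free (noted in the proof of Theorem \ref{cellular homology}), so a short exact sequence $0\to\mathcal{F}'\to\mathcal{F}\to\mathcal{F}''\to 0$ of presheaves on $P$ produces a short exact sequence of cellular chains on $P$, and similarly for the matching sequence on $Q$ receiving it via $k$. Since $k_{cell}$ is defined coordinatewise as $\varPhi\otimes k$, it assembles into a morphism of short exact sequences of chain complexes over the map $f$. Naturality of the snake lemma then forces $k_{cell,*}$ to commute with the associated connecting homomorphisms, which is exactly property (b).

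With (a) and (b) verified, the uniqueness in Theorem \ref{f-hom} gives $k_{cell,*}=\text{Tor}^f_n(k,t)$. The only delicate step is the degree-zero identification in (a) -- specifically, tracking that the isomorphism from Theorem \ref{cellular homology} intertwines the rank-zero piece of $k_{cell}$ with the map on tensor products induced by $k$ and $t$. Once that bookkeeping is done, everything else reduces to a standard $\delta$-functor comparison, and no further explicit chain-level computation is needed.
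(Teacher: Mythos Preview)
Your proposal is correct and follows essentially the same approach as the paper's proof: verify the two characterizing properties from Theorem~\ref{f-hom} and invoke its uniqueness clause. The paper dispatches property (a) in one line (``obvious by definition of $k_{cell}$''), whereas you trace the degree-zero identification through $\mathcal{F}=j_{x*}\underline{\mathbb{Z}}$ and right exactness; this extra care is justified but not a different strategy.
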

\begin{proof}
	We only needs to check that $k_{cell,*}$ satisfying the property appears in Lemma \ref{f-hom}.
	
	 $k_{cell,0}$ coincide with map $\mathcal{F}\otimes_P\mathcal{G} \to \mathcal{E}\otimes_Q\mathcal{H}$ (induced by $t,k$) is obvious by definition of $k_{cell}$.
	 
	 A commutative diagram $$\xymatrix{
		0\ar[r] & \mathcal{F}' \ar[r] \ar@{~>}[d]^s & \mathcal{F} \ar[r] \ar@{~>}[d] & \mathcal{F}'' \ar[r] \ar@{~>}[d]^k & 0\\
		0\ar[r] & \mathcal{E}' \ar[r]  & \mathcal{E} \ar[r]  & \mathcal{E}'' \ar[r] & 0
	}$$
    induce a commutative diagram  of chain complex
    $$\xymatrix{
    	0\ar[r] & C_*(P,\mathcal{G};\mathcal{F}') \ar[r] \ar[d]^{s_{cell}} & C_*(P,\mathcal{G};\mathcal{F}) \ar[r] \ar[d] & C_*(P,\mathcal{G};\mathcal{F}'') \ar[r] \ar[d]^{k_{cell}} & 0\\
    	0\ar[r] & C_*(Q,\mathcal{H};\mathcal{E}') \ar[r]  & C_*(Q,\mathcal{H};\mathcal{E}) \ar[r]  & C_*(Q,\mathcal{H};\mathcal{E}'') \ar[r] & 0
    }$$
	and so a commutative diagram $$\xymatrix{
		H_n(C_*(P,\mathcal{G};\mathcal{F}'')) \ar[r]^\delta \ar[d]^{k_i} & H_{n-1}(C_*(P,\mathcal{G};\mathcal{F}')) \ar[d]^{s_{i-1}}\\
		H_n(C_*(Q,\mathcal{H};\mathcal{E}'')) \ar[r]^\delta &H_{n-1}(C_*(Q,\mathcal{H};\mathcal{E}'))
	}$$
    combine the isomorphism in Theorem \ref{cellular homology} and this completes the proof.
    
\end{proof}

\vskip 1em
\subsection{Cellular form of Cartesian product}~
\vskip 1em
The cross product of cellular chain is simple.
Assume $(P,\mathcal{G}),(Q,\mathcal{H})$ have cellular form $\varLambda(P,\mathcal{G}),\varLambda(Q,\mathcal{H})$.

\begin{lem}
	Assume copresheaves $\mathcal{G},\mathcal{H}$ are free, then the pair $(P\times Q,\mathcal{G}\times \mathcal{H})$ is homological cellular with cellular form 
	$$\varLambda(P\times Q,\mathcal{G}\times \mathcal{H})_{(x,y)}=\varLambda(P,\mathcal{G})_x \otimes \varLambda(Q,\mathcal{H})_y$$
	with differential $\partial(a \otimes b) = \partial(a) \otimes b + (-1)^{r(x)}a\otimes \partial(b)$ where $a\in \varLambda(P,\mathcal{G})_x,b \in  \varLambda(Q,\mathcal{H})_y$
\end{lem}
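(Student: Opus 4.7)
The plan is to verify the three defining properties of a cellular form in Definition \ref{cellular form} by reducing everything to a K\"unneth-style argument on the sub-complexes $\varLambda(P,\mathcal{G})_{\leq x}$ and $\varLambda(Q,\mathcal{H})_{\leq y}$.

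First I would check property (1) directly. A covering relation $(x',y') \lessdot (x,y)$ in the product poset is either $x' \lessdot x$ with $y'=y$, or $x'=x$ with $y' \lessdot y$, and in both cases the rank in $P\times Q$ is $r(x)+r(y)$. The formula $\partial(a\otimes b)=\partial(a)\otimes b+(-1)^{r(x)}a\otimes\partial(b)$ lands precisely in the sum over these covers and squares to zero by the usual sign computation, so $\varLambda(P\times Q,\mathcal{G}\times\mathcal{H})_{\leq(x,y)}$ is a subcomplex. Moreover, since $(x',y')\leq (x,y)$ iff $x'\leq x$ and $y'\leq y$, we have the identification
\[
\varLambda(P\times Q,\mathcal{G}\times\mathcal{H})_{\leq(x,y)}
=\varLambda(P,\mathcal{G})_{\leq x}\;\otimes\;\varLambda(Q,\mathcal{H})_{\leq y}
\]
as chain complexes (with the total differential above).

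Next I would verify properties (2) and (3) by invoking the algebraic K\"unneth theorem. Because $\mathcal{G},\mathcal{H}$ are free and a cellular form exists on each factor, the groups $\varLambda(P,\mathcal{G})_x$ and $\varLambda(Q,\mathcal{H})_y$ are free abelian (this is where Lemma \ref{lem of form} is used), so the two subcomplexes are complexes of free abelian groups. Hence the $\mathrm{Tor}$-terms in the K\"unneth formula vanish and
\[
H_n\bigl(\varLambda(P,\mathcal{G})_{\leq x}\otimes\varLambda(Q,\mathcal{H})_{\leq y}\bigr)
\;\cong\;\bigoplus_{i+j=n} H_i(\varLambda(P,\mathcal{G})_{\leq x})\otimes H_j(\varLambda(Q,\mathcal{H})_{\leq y}).
\]
For $n>0$ every summand on the right is zero by the cellular property of each factor. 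For $n=0$ the right side collapses to $\mathcal{G}(x)\otimes\mathcal{H}(y)=(\mathcal{G}\times\mathcal{H})(x,y)$, giving property (3) pointwise.

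Finally I would promote this pointwise $H_0$-isomorphism to an isomorphism of copresheaves on $P\times Q$. The extension map attached to $(x,y)\leq(x'',y'')$ is induced by the inclusion of tensor-product subcomplexes, which factors as the tensor product of the two inclusions $\varLambda(P,\mathcal{G})_{\leq x}\hookrightarrow\varLambda(P,\mathcal{G})_{\leq x''}$ and $\varLambda(Q,\mathcal{H})_{\leq y}\hookrightarrow\varLambda(Q,\mathcal{H})_{\leq y''}$. Naturality of K\"unneth in each variable then identifies the induced map on $H_0$ with the tensor product of the extension maps of $\mathcal{G}$ and $\mathcal{H}$, which is by definition the extension map of $\mathcal{G}\times\mathcal{H}$. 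The only subtle point is the freeness of each $\varLambda(P,\mathcal{G})_x$, needed to kill the K\"unneth Tor-term; everything else is bookkeeping with signs and covering relations.
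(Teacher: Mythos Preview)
Your proof is correct and follows essentially the same route as the paper: identify $\varLambda(P\times Q,\mathcal{G}\times\mathcal{H})_{\leq(x,y)}$ with the tensor product of the two factor complexes, invoke Lemma \ref{lem of form} for freeness, and apply the K\"unneth theorem. The paper's proof is a one-line sketch of exactly this argument; you have simply filled in the bookkeeping on covering relations, signs, and the naturality of the $H_0$-identification that the paper leaves implicit.
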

\begin{proof}
	Chain $\varLambda(P\times Q,\mathcal{G}\times \mathcal{H})_{\leq(x,y)}$ is isomorphic with the tensor product of two chains $\varLambda(P,\mathcal{G})_{\leq x}$ and $\varLambda(Q,\mathcal{H})_{\leq y}$ by our definition. Recall that all copresheaf in this paper is assumed to be free, $\varLambda(P,\mathcal{G})_{\leq x}$ and $\varLambda(Q,\mathcal{H})_{\leq y}$ are both free by Lemma \ref{lem of form} , then the K\"{u}nneth theorem give us all required property of $\varLambda(P\times Q,\mathcal{G}\times \mathcal{H})$ to be cellular form.
\end{proof}

\begin{example}\label{fm1}
	In this example, let $L$ be a geometric lattice, $\vee: L\times L \to L$ be join of $L$, $\star : \delta^{(\hat{0},\hat{0})}\mathbb{Z} \rightsquigarrow \delta^{\hat{0}}\mathbb{Z}$ is $\vee$-homomorphism of copresheaves, then Condition \ref{cond dim} holds for join $\vee$ since $r(I\vee J) \leq r(I) +r(J)$ holds for  geometric lattice. 
	
	Check that the product $A^*(L) \otimes A^*(L) \to A^*(L)$ of OS-algebra is the morphism of cellular form $\varLambda(L,\delta^{\hat{0}}\mathbb{Z})\otimes \varLambda(L,\delta^{\hat{0}}\mathbb{Z}) \to \varLambda(L,\delta^{\hat{0}}\mathbb{Z})$ associated with $\vee,\star$ by Definition \ref{map form}.
	
	Furthermore, let $k: \mathcal{F} \to \mathcal{E}$ be $\vee$-homomorphism of presheaves, then the cellular chain map $k_{cell} : C_*(L\times L, \delta^{(\hat{0},\hat{0})}\mathbb{Z}; \mathcal{F}) \to C_*(L, \delta^{\hat{0}}\mathbb{Z}; \mathcal{E})$ is given by $(x\otimes y)\otimes a \mapsto (x\cdot y)\otimes k(a)$ where $a\in \mathcal{F}(I,J)$ for some $I,J\in L$, $x \in A^*(L)_I, y \in A^*(L)_J$ and $x\cdot y \in A^*(L)_{I\vee J}$ be the product of $x,y$ in OS-algebra. 
\end{example}

\section{Associated subspace arrangement}

Apply the toolkit developed in last two sections, we prove the main result of this paper. Actually , we give a presentation of cohomology ring of more general space 
    $$F_{\mathbb{Z}_k^m}(\mathbb{C}^m,\Gamma)= \{(x_1,x_2,...,x_n)\in \mathbb{C}^m \times...\times \mathbb{C}^m | Gx_i \cap Gx_j = \emptyset \text{ for } (ij)\in E(\Gamma)\}$$
    where $\Gamma$ is a simple graph on vertex set $[n]$ and $E(\Gamma)$ is set of edge, we call it "chromatic orbit configuration space".

\textbf{ Notations.}\begin{enumerate}
	\item For any simple graph $\Gamma$ (without loop or multiple edge), a spanning subgraph is a subgraph with the same vertex set, let $L(\Gamma)$ be the bond lattice of graph $\Gamma$, i.e., all partition induced by connected component of spanning subgraph of $\Gamma$.
	\item Let $M=\underbrace{\mathbb{C}^m\times...\times \mathbb{C}^m}_n$, $\mathcal{A}=\{V_{(e,g)}\}$ be a subspace arrangement  in $M$ where $$V_{(e,g)}= \{(x_1,...,x_n)\in M| gx_i=x_j \text{ for } e=(ij)\}$$ for all $(e,g) \in E(\Gamma) \times \mathbb{Z}_k^m$. Obviously, $F_{\mathbb{Z}_k^m}( \mathbb{C}^m, \Gamma) = \mathcal{M}(\mathcal{A})$ as a topological subspace of $M$, $\mathcal{P}$ be the intersection lattice of $\mathcal{A}$.
	\item For any subset $U\in E(\Gamma) \times \mathbb{Z}_k^m$, denote $V_U$ the \textbf{intersection} $\bigcap_{(e,g)\in U}V_{(e,g)}$
\end{enumerate}

We study the intersection lattice $\mathcal{P}$ by a fibration $L_k^m(\Gamma)$ on $L(\Gamma)$ such that $\mathcal{P}$  is "almost equals" $L_k^m(\Gamma)$ by a comparison map $\sigma:L_k^m(\Gamma) \to \mathcal{P}$ and the lattice $L_k^m(\Gamma)$ is more easier to deal with.\\
\vskip .5em
\subsection{A fibration $L_k^m(\Gamma)$}~
\vskip .5em
Review that in Section 1, we have defined a poset $\mathcal{C}_I^m$ as set of all partial matrix of $\mathbb{Z}_k$-coloring, indexed by $I'\times [m]$, every entry of index $(p,t)$ is a $\mathbb{Z}_k$-coloring of block $p$ or an undefined "$?$". These $\mathcal{C}_I^m$ is related by restriction of coloring illustrated in  following definition.

\begin{defn}[Restriction of $\mathcal{C}_I^m$]~
	
	\begin{itemize}
		\item For any $\mathbb{Z}_k$-coloring $\phi$ (represented by a function $f: p \to \mathbb{Z}_k$) of a block $p$ and $q\subseteq p$, let $\phi|_q$ be the restriction of $\phi$ on $q$, i.e., represented by the restriction of $f$ on block $q$. We also write $? |_{q} = ?$.
		\item For any two partition  $I_1\leq I_2 \in L(\Gamma)$, we define a restriction map $$-|_{I_1} : \mathcal{C}_{I_2}^m  \to  \mathcal{C}_{I_1}^m$$ by $(\theta|_{I_1})_{p_1,t}=\theta_{p_2,t}|_{p_1}$ where $\theta \in \mathcal{C}_{I_2}^m$, $p_1$ is a block in $I'_1$, $p_2$ is the unique block in $I'_2$ such that $p_1\subseteq p_2$.
	\end{itemize}
	It's easy to check that $I \mapsto \mathcal{C}_{I}^m$ along with these restriction map is a contravariant functor from $L(\Gamma)$ to the category of poset.
\end{defn}

\begin{defn}
	Let $\pi: L_k^m(\Gamma) \to L(\Gamma)$ be the fibration of poset constructed by contravariant functor $I \mapsto \mathcal{C}_{I}^m$	
\end{defn}
\begin{rem}
	The set $L_k^m$ in Section 1 is a special case of $L_k^m(\Gamma)$ in which $\Gamma$ is a complete graph on $[n]$.
\end{rem}

\begin{lem}\label{lemjoin}
	$L_k^m(\Gamma)$ is a join semilattice, $\pi$ preserve join. Furthermore, if $\theta\lessdot\nu \in \mathcal{C}_{I}^m,\psi \in \mathcal{C}_{J}^m$, then $\theta\vee\psi \lessdot \nu\vee \psi $ or $\theta\vee\psi = \nu\vee \psi $.
\end{lem}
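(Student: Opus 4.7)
The plan is to construct $\theta \vee \psi$ explicitly and then verify the join axioms and the covering assertion. Given $\theta \in \mathcal{C}_I^m$ and $\psi \in \mathcal{C}_J^m$, set $K = I \vee J \in L(\Gamma)$ and define an element of $\mathcal{C}_K^m$ entrywise: for each $(q,t) \in K' \times [m]$, put $(\theta \vee \psi)_{q,t} = \Phi$ whenever there exists a coloring $\Phi$ of $q$ whose restriction to each $p \in I'$ with $p \subseteq q$ equals $\theta_{p,t}$ and whose restriction to each $p' \in J'$ with $p' \subseteq q$ equals $\psi_{p',t}$; otherwise put $(\theta \vee \psi)_{q,t} = ?$. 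In particular, if any relevant sub-block entry is $?$, no such $\Phi$ exists and the position is filled with $?$ by default.

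For the uniqueness of $\Phi$ when it does exist, I would consider the bipartite overlap graph $H_q$ whose vertex set is $\{p \in I' : p \subseteq q\} \cup \{p' \in J' : p' \subseteq q\}$, with an edge between $p$ and $p'$ whenever $p \cap p' \neq \emptyset$. Every element of $q$ lies in some vertex of $H_q$, for otherwise it would be a singleton in both $I$ and $J$ and hence also in $K$, contradicting $q \in K'$. Moreover $H_q$ is connected, since the transitive-closure description of $I \vee J$ produces, between any two elements of $q$, a chain of alternating $I$- and $J$-block overlaps. A candidate $\Phi$, seen as a function $q \to \mathbb{Z}_k$ modulo a global constant, is then determined once its representative is fixed on any one vertex of $H_q$, because the overlaps propagate the $\mathbb{Z}_k$-shifts throughout $q$.

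Next I would check the order properties. Unwinding Definition~\ref{fibposet}, $\theta \leq \theta \vee \psi$ in $L_k^m(\Gamma)$ amounts to $\theta$ being a completion of $(\theta \vee \psi)|_I$, which is immediate from the construction, and likewise for $\psi$. For minimality, any upper bound $\alpha \in \mathcal{C}_{K''}^m$ with $K'' \geq K$ restricts to $\alpha|_K \in \mathcal{C}_K^m$, still an upper bound, and $\alpha|_K \leq \alpha$; hence it suffices to minimize within $\mathcal{C}_K^m$. The constraint on $\alpha_{q,t}$ is independent of the entry at other $(q',t')$, and at $(q,t)$ the only admissible values are $?$ and, if it exists, the unique $\Phi$; the pointwise choice that prefers the coloring whenever possible is therefore exactly the minimum. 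The identity $\pi(\theta \vee \psi) = K = \pi(\theta) \vee \pi(\psi)$ is clear by construction.

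For the covering statement, write $\theta$ as the $1$-completion of $\nu$ that changes a single entry $\nu_{p_0, t_0} = ?$ into some coloring $\phi$, with all other entries unchanged, and let $q_0 \in K'$ be the unique block of $K$ containing $p_0$. For every $(q,t) \neq (q_0, t_0)$ the inputs to the definitions of $(\theta \vee \psi)_{q,t}$ and $(\nu \vee \psi)_{q,t}$ coincide, so these entries agree. At $(q_0, t_0)$, $(\nu \vee \psi)_{q_0, t_0} = ?$ because $\nu_{p_0, t_0} = ?$ forbids a coloring; meanwhile $(\theta \vee \psi)_{q_0, t_0}$ is either again $?$ (when some other sub-block entry in $q_0$ at column $t_0$ is still $?$ or the extended data is inconsistent) or a single coloring, producing $\theta \vee \psi = \nu \vee \psi$ or $\theta \vee \psi \lessdot \nu \vee \psi$ respectively. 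The main non-formal step in the whole argument is the connectedness of $H_q$, on which both well-definedness of the construction and uniqueness of $\Phi$ rest.
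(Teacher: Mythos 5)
Your proposal is correct and follows essentially the same route as the paper: the same entrywise construction of $\theta\vee\psi$ indexed by $(I\vee J)'\times[m]$, the verification that it is the least upper bound (so that $\pi$ preserves joins), and the observation that replacing one entry of $\nu$ affects only the single entry $(q_0,t_0)$ of the join, which yields the alternative $\theta\vee\psi\lessdot\nu\vee\psi$ or $\theta\vee\psi=\nu\vee\psi$. Your connectivity argument for the overlap graph simply makes explicit the uniqueness of the coloring $\Phi$ (hence well-definedness of the construction), a point the paper leaves as ``easy to check.''
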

\begin{proof}
	For $\theta \in \mathcal{C}_{I}^m, \psi \in \mathcal{C}_{J}^m$, we are going to define a $(I\vee J)' \times [m]$ indexed partial matrix $\theta \vee \psi$. 
	
	For a block  $p\in (I\vee J)'$ and $t \in [m]$, let $\{q_i\}$ the set of blocks in $I'$ or in $J'$ that contained in $p$, write $x_i=\theta_{q_i,t}$ if $q_i$ is a block of $I'$ or $x_i=\psi_{q_i,t}$ if $q_i$ is a block of $J'$. If there is no $x_i =?$ and exists 
	\begin{equation}
		f : p \to \mathbb{Z}_k
	\end{equation}
	such that $f|{q_i}$ represents coloring $x_i$, then we define  $(\theta \vee \psi)_{p,t}=f$, otherwise, define $(\theta \vee \psi)_{p,t}=?$. It's easy to check the partial matrix $\theta \vee \psi$ we have defined is least upper bound of $\theta$ and $\psi$. 
	
	For the second part of this lemma, assume that $\nu$ is made by changing an entry of $\theta$ with index $(p,t)$ to "$?$", this operation only affect the entry in $\theta \vee \psi$ with index $(q,t)$ by above construction, where $q$ is the unique block of $I\vee J$ that contains $p$, so if the entry of $\theta \vee \psi$ with index $(q,t)$ is not "$?$", then $\theta\vee\psi \lessdot \nu\vee \psi $, otherwise, $\theta\vee\psi = \nu\vee \psi $.
\end{proof}
\vskip 1em
\subsection{Comparison of $L_k^m(\Gamma)$ and intersection lattice}~
\vskip .5em
Now, let us observe the intersection lattice $\mathcal{P}$. For example, if $n=2$ and $\Gamma$ have one edge $e$, observe that $V_{(e,g_1)}\cap V_{(e,g_2)}$ be the subspace $\{(x_1,x_2)\in M| g_{1t}x_{1t}=x_{2t} \text{ for } g_{1t}=g_{2t} \text{ and } x_{1t}=x_{2t}=0 \text{ for } g_{1t}\neq g_{2t}\}$.

Generally, define a map $\sigma: L_k^m(\Gamma) \to \mathcal{P}$ such that 
\begin{align*}
	\sigma(\theta) = \{(x_1,...,x_n)\in M &|\theta_{p,t}(i)x_{it}=\theta_{p,t}(j)x_{jt} \text{ if } \theta_{p,t}\neq ?, i,j \in p\\
	&|x_{it}=x_{jt} =0 \text{ if } \theta_{p,t}=?, i,j \in p \} 		
\end{align*}
It's easy to check $\sigma$ is a morphism of join semilattice, satisfying
\begin{equation}\label{eqcod}
	\text{codim}(\sigma(\theta)) = r_f(\theta)+m\cdot r_b(\theta)
\end{equation}
Write $r_t(\theta)=r_f(\theta)+m\cdot r_b(\theta)$.

For any partial matrix $\theta \in \mathcal{F}_{I}^m \subseteq L_k^m(\Gamma)$, we say \textbf{a row} $\theta_{p,*}$ is \textbf{undefined} if its entries are  all undefined. Note that there is two operation on $L_k^m(\Gamma)$ \textbf{do not change the image} of $\sigma$, that is (1) glue: For any undefined rows $(p_1,*),(p_2,*),...$, delete these  rows and add a new undefined row $(p_1\cup p_2\cup...,*)$. (2) split: For any undefined row $(p,*)$, delete this row and add some new undefined rows $(p_1,*),(p_2,*),...$, where $p_i\subset p$ are pairwise disjoint, $\text{Card}(p_i)>1$. 

Now, for any  $U\subseteq E(\Gamma) \times \mathbb{Z}_k^m$ and intersection $V_U \in \mathcal{P}$, we are going to define a $\theta \in L_k^m(\Gamma)$ such that $\sigma(\theta)=V_U$. All the edges appear in $U$ generate a spanning subgraph of $\Gamma$, let $I$ be the partition of connected component of this subgraph and $p\in I$ be a block.  For a given $\mathbb{Z}_k$-coloring of $p$, we say $U$ is $\phi$-compatible on index $(p,t)$ if for $i,j\in p$ and any sequence $(e_1,g_1),(e_2,g_2),...\in U$ such that $(e_1,e_2,...)$ be a path form  $i$ to $j$, $\sum_l g_{lt}$ always equals $\phi(j)^{-1}\phi(i)$. Conversely, we say that $U$ is incompatible on index $(p,t)$ if there is some different sequences connect $i,j\in p$ with different sum $\sum_l g_{lt}$. Define a partial matrix $\theta$ that $\theta_{p,t}=\phi$ if $U$ is $\phi$-compatible on index $(p,t)$ and $\theta_{p,t}=?$ if $U$ is incompatible on index $(p,t)$. Check that $V_U = \sigma(\theta)$ by definition. Then any $x\in \mathcal{P}$ have the form $\sigma(\theta)$, i.e., $\sigma$ is surjective. We define $$\alpha(x)= \bigvee_{\theta\in \sigma^{-1}x}\theta$$ be the greatest element of $\sigma^{-1}x$. Notice that $\alpha(x)$ has at most one undefined row, or otherwise we can glue these row and get a larger element in $\sigma^{-1}x$.

\begin{lem}\label{compare map0}~
	Every element in $\sigma^{-1}(x)$ is made by split undefined row of $\alpha(x)$.
\end{lem}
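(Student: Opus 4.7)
The plan is to combine the order relation $\theta \leq \alpha(x)$ with the codimension identity
\[ \text{codim}(\sigma(\theta)) = r_f(\theta) + m\cdot r_b(\theta) \]
from Equation~\ref{eqcod}, and to show that any deviation from a covering split of the unique undefined row strictly decreases $r_f + m\cdot r_b$, which would contradict $\sigma(\theta)=\sigma(\alpha(x))=x$.

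First I would unfold the inequality: since $\alpha(x)$ is the maximum of $\sigma^{-1}(x)$ in $L_k^m(\Gamma)$, any $\theta\in\sigma^{-1}(x)$ satisfies $\theta\leq\alpha(x)$, which in the fibration $\pi:L_k^m(\Gamma)\to L(\Gamma)$ unwinds to $I_\theta\leq I_{\alpha(x)}$ in the bond lattice, together with the statement that on each block $q\in I'_\theta$ contained in some $p\in I_{\alpha(x)}$, the row $\theta_{q,*}$ is a completion of $\alpha(x)_{p,*}|_q$. I would then carry out the codimension bookkeeping block by block: if a block $p\in I_{\alpha(x)}$ is refined in $I_\theta$ into $s$ sub-blocks of size $\geq 2$ and $u$ singleton sub-blocks, then $r_b$ drops by $s+u-1$, while the change in $r_f$ depends on whether $\alpha(x)_{p,*}$ is defined or undefined.

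If $p$ carries a defined row of $\alpha(x)$, the restriction $\alpha(x)_{p,*}|_{q_i}$ is fully defined, so $\theta_{q_i,*}$ is forced to coincide with it and $r_f$ is unchanged; hence the local net change in $r_f + m\cdot r_b$ is $-m(s+u-1)$, strictly negative unless $s=1$ and $u=0$. If $p$ carries the (at most one) undefined row, each big sub-block $q_i$ contributes at most $m$ to $r_f$, with equality iff its row is entirely undefined, so the local net change in $r_f + m\cdot r_b$ is at most $-mu$, vanishing iff $u=0$ and every entry of $\theta_{q_i,*}$ is $?$. Summing over $p$, the hypothesis $\sigma(\theta)=\sigma(\alpha(x))$ forces equality in every local contribution, which means the defined rows of $\alpha(x)$ descend verbatim to $\theta$ and the undefined row is refined by a \emph{covering} family of size-$\geq 2$ sub-blocks whose new rows are all entirely $?$ -- precisely the split-undefined-row operation described before the lemma.

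The main obstacle I expect is the careful block-by-block bookkeeping of $r_f$ and $r_b$ under partition refinement, in particular tracking the interaction between singleton sub-blocks (which reduce $r_b$ without contributing to $r_f$) and filled-in $?$ entries (which reduce $r_f$ without affecting $r_b$); once both contributions are isolated as local sums, the strict-inequality check in each forbidden case becomes routine.
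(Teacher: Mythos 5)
Your strategy is essentially the paper's: both arguments rest on Equation~\ref{eqcod}, giving $r_f+m\,r_b=\mathrm{codim}(x)$ for every element of $\sigma^{-1}(x)$, and then force equality in a rank bookkeeping under refinement. (The paper organizes it slightly differently: it first shows $\theta=\alpha(x)|_I$ by a rank comparison inside one fiber, then analyses when $r_f(\alpha(x)|_I)=r_f(\alpha(x))+m(r(J)-r(I))$ can hold; your requirement that the split blocks of size $\ge 2$ must cover the undefined row is, if anything, more precise than the paper's wording.) However, there is a gap in your case analysis: you assume every row of $\alpha(x)$ is either fully defined or fully undefined. That dichotomy is not exhaustive. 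Compatibility of $U$ is tested coordinate by coordinate in $[m]$, so $\alpha(x)$ can have rows with some defined entries and some $?$ entries; for instance $n=2$, $m=2$, $k=2$, $U=\{(e,(0,0)),(e,(0,1))\}$ yields the row $(\phi,?)$. The remark before the lemma only says there is at most one row that is \emph{entirely} undefined.

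For such a mixed row your ``defined row'' branch fails: the restriction $\alpha(x)_{p,*}|_{q_i}$ is not fully defined, $\theta_{q_i,*}$ is not forced to coincide with it, and $r_f$ need not be unchanged. Ruling out refinement or filling of precisely these rows is part of what the lemma asserts, so the case cannot be skipped. The repair stays inside your own bookkeeping: if the row over $p$ has $d\ge 1$ defined and $m-d\ge 1$ undefined entries, refining $p$ into $s$ blocks of size $\ge 2$ and $u$ singletons raises $r_f$ by at most $(s-1)(m-d)<m(s+u-1)$ whenever $(s,u)\neq(1,0)$, and if $(s,u)=(1,0)$ any filling of a $?$ strictly lowers $r_f$; either way the local change of $r_f+m\,r_b$ is strictly negative, as required. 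Add this third case (and, for all cases, state explicitly that filling entries without refining is also excluded, which is what the paper gets in one stroke from $\theta=\alpha(x)|_I$), and your proof is complete.
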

\begin{proof}
	Assume $\theta \in \sigma^{-1}(x)$, $\pi(\theta)=I$, $\pi(\alpha(x))=J$ and $\theta< \alpha(x)$. Then $\theta \leq \alpha(x)|_I \leq \alpha(x)$ by definition of Grothendieck fibration. $\theta<\alpha(x)|_I$ leads a contradiction $r_t(\theta)=\text{codim}(x) <r_t(\alpha(x)|_I)=\text{codim}(x)$, so $\theta=\alpha(x)|_I$ and $I<J$. Note that in this case, $\text{codim}(x)= r_f(\alpha(x))+m\cdot r(J)=r_f(\alpha(x)|_I)+m\cdot r(I)$, and so
   $r_f(\alpha(x)|_I) = r_f(\alpha(x)) + m(r(J)-r(I))$, this equation holds iff $I$ only refines some part $p\in J$ such that $\alpha(x)_{p,*}$ is undefined, so $\theta=\alpha(x)|_I$ is made by split some undefined rows of $\alpha(x)$.
\end{proof}

\vskip .5em
\subsection{Cellular form of $\mathcal{C}_{I}^m$}~
\vskip .5em
We construct the cellular form of $\mathcal{C}_{I}^m$. Recall that $\text{Ud}(\theta)$ is the set of index of undefined entry in $\theta \in \mathcal{C}_{I}^m$. We sort $\text{Ud}(\theta)$ by lexicographic order of $I\times [m]$ (the order of $I$ is chosen to be lexicographic order of $2^{[n]}$), denote $\text{Ud}_s(\theta)$ the sorted sequence.


\begin{defn}\label{diffcp}
	Define a differential $\partial : \text{Cp}(\theta)\to \bigoplus_{\psi\lessdot\theta}\text{Cp}(\psi)$ for any $\theta\in \mathcal{C}_{I}^m$ by $$\sum_{\psi\lessdot\theta}(-1)^{|\psi\lessdot\theta|}\rho_{\psi}$$ where $\rho_{\psi}$ is the  projection $\text{Cp}(\theta)\to \text{Cp}(\psi)$ and define $|\psi\lessdot\theta|=i-1$ if $\psi$ is made by filling the $i$-th undefined entry of $\theta$, i.e., the entry with index $\text{Ud}_s(\theta)_i$. (see Definition \ref{defcp} for $\text{Cp}(-)$ and $\text{BCp}(-)$)
\end{defn}

\begin{lem}
	\begin{enumerate}
		\item $\partial$ is a well defined differential on $\bigoplus_{\theta \in \mathcal{C}_{I}^m}\text{Cp}(\theta)$.
		\item $\partial$ induce a well defined differential on $\bigoplus_{\theta \in \mathcal{C}_{I}^m}\text{BCp}(\theta)$.
		\item The map $\partial : \text{Cp}(\theta)\to \bigoplus_{\psi\lessdot\theta}\text{Cp}(\psi)$ is always injective for any $\theta$ with positive rank ($r_f$).
	\end{enumerate}
	
\end{lem}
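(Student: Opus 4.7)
The plan is to handle the three parts in order. Parts (2) and (3) follow almost directly from the definitions, while (1) is a standard simplicial-type sign-cancellation argument.

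For part (1), I first note that the projection $\rho_\psi : \text{Cp}(\theta) \to \text{Cp}(\psi)$ is well-defined since every completion of $\psi$ is automatically a completion of $\theta$; explicitly, $\rho_\psi(\eta) = \eta$ when $\eta \leq \psi$ and $\rho_\psi(\eta) = 0$ otherwise. To verify $\partial^2 = 0$ on a single completion $\eta$ of $\theta$, I enumerate unordered pairs $\{i,j\}$ with $i<j$ in the index set of $\text{Ud}_s(\theta)$. For each such pair there are exactly two two-step descents from $\eta$ that both terminate in the partial matrix of $\theta$ obtained by filling $u_i$ and $u_j$ with $\eta$'s values: going via the unique $\psi_i^\eta$ first (in which $u_j$ has shifted to position $j-1$ of $\text{Ud}_s(\psi_i^\eta)$) gives composite sign $(-1)^{i-1}(-1)^{j-2}$, while going via $\psi_j^\eta$ first (in which $u_i$ remains at position $i$ of $\text{Ud}_s(\psi_j^\eta)$) gives composite sign $(-1)^{j-1}(-1)^{i-1}$; these differ by $-1$ and cancel. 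The one delicate point is precisely this reindexing of $\text{Ud}_s(\psi)$ after a single filling, but once it is set up correctly the remainder is routine.

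For part (2), I plan to show that the $\text{BCp}$ condition is inherited by each cover. Given $x = \sum_i k_i \eta_i \in \text{BCp}(\theta)$ and $\psi \lessdot \theta$, the membership $\rho_\psi(x) \in \text{BCp}(\psi)$ asks that $\sum_{\eta_i < \nu} k_i = 0$ for every $\nu \leq \psi$ with $r_f(\nu)=1$, where the sum ranges over the $\eta_i$ appearing in $\rho_\psi(x)$; but any $\eta_i < \nu \leq \psi$ automatically satisfies $\eta_i \leq \psi$, so the sum is exactly the defining relation of $\text{BCp}(\theta)$ applied to the same $\nu \leq \theta$, which holds by hypothesis.

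For part (3), I exploit the direct sum decomposition: $\partial(x)=0$ in $\bigoplus_{\psi \lessdot \theta} \text{Cp}(\psi)$ forces each component $\rho_\psi(x) = 0$. Suppose $x = \sum k_i \eta_i$ with some $k_{i_0} \neq 0$. Since $r_f(\theta) > 0$ there exists at least one undefined entry $u$ of $\theta$; let $\psi$ be the unique cover of $\theta$ obtained by filling $u$ with the value of $\eta_{i_0}$ at $u$. Then $\eta_{i_0} \leq \psi$, and because distinct completions are independent basis vectors of $\text{Cp}(\psi)$, the coefficient of $\eta_{i_0}$ in $\rho_\psi(x)$ is exactly $k_{i_0} \neq 0$, contradicting $\rho_\psi(x) = 0$. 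Hence $\partial$ is injective.
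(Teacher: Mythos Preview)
Your proof is correct and follows essentially the same approach as the paper's. For (1) you spell out the standard simplicial sign cancellation that the paper compresses into the single identity $(-1)^{|\psi\lessdot\psi_1|+|\psi_1\lessdot\theta|}=-(-1)^{|\psi\lessdot\psi_2|+|\psi_2\lessdot\theta|}$; your reindexing computation is exactly what underlies that identity (one small phrasing issue: ``two-step descents from $\eta$'' should really be ``two-step descents from $\theta$ through which $\eta$ survives'', since $\eta$ itself has rank $0$). Parts (2) and (3) match the paper almost verbatim: the key observation in (2) that $\eta_i<\nu\leq\psi$ already forces $\eta_i\leq\psi$, so the $\text{BCp}(\theta)$ relation for $\nu$ gives the $\text{BCp}(\psi)$ relation, is exactly the paper's argument; and in (3) both you and the paper pick any $\psi\lessdot\theta$ lying above a completion with nonzero coefficient and read off that coefficient in $\rho_\psi(x)$.
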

\begin{proof}~
	\begin{enumerate}
	
	\item Note that $|\psi\lessdot\psi_1|\cdot|\psi_1\lessdot\theta|=-|\psi\lessdot\psi_2|\cdot|\psi_2\lessdot\theta|$ for any two different chain $\psi\lessdot\psi_1\lessdot\theta,\psi\lessdot\psi_2\lessdot\theta$, then $\partial\partial=0$ and $\partial$ is a well defined differential on $\bigoplus_{\theta \in \mathcal{C}_{I}^m}\text{Cp}(\theta)$. 
	
	\item Let $\psi\lessdot \theta$, note that any $(l-1)$-completion of $\psi$ is also a $l$-completion of $\theta$, so projection $\text{Cp}(\theta)\to  \text{Cp}(\psi)$ maps $\text{BCp}(\theta)$ to $\text{BCp}(\psi)$, then $\bigoplus_{\theta \in \mathcal{C}_{I}^m}\text{BCp}(\theta)$ is also a $\mathcal{C}_{I}^m$-graded $\mathbb{Z}$-module with above differential.
	
	\item Assume $r_f(\theta)>0$, then for any completion $\eta$ of $\theta$, there always exists a $\psi \lessdot \theta$ such that $\eta<\psi$, so $\rho_{\psi} : \text{Cp}(\theta)\to \text{Cp}(\psi)$ maps a formal sum contains $\eta$  to a nonzero element in $\text{Cp}(\psi)$, then $\partial: \text{Cp}(\theta) \to \bigoplus_{\psi \lessdot \theta} \text{Cp}(\psi)$ is a injective map for any $\theta$ with positive rank.
\end{enumerate}
	
\end{proof}

\begin{thm}\label{cf1}
	$(\bigoplus_{\theta \in \mathcal{C}_{I}^m}\text{BCp}(\theta),\partial)$ is cellular form of pair $(\mathcal{C}_{I}^m, \overline{\mathbb{Z}})$ and  $\text{BCp}(\theta)$ is free abelian group with rank $\prod_{(p,t)\in \text{Ud}(\theta)}(k^{|p|-1}-1)$.
\end{thm}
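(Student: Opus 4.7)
The plan is to exhibit $\varLambda_{\leq\theta}$ as a Koszul-style tensor product of elementary two-term complexes, one per undefined entry of $\theta$, and then read off freeness, the rank formula, and homological vanishing from K\"unneth.

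First I would fix $\theta\in\mathcal{C}_I^m$ with $\mathrm{Ud}(\theta)=\{(p_1,t_1),\dots,(p_s,t_s)\}$ in the lexicographic order of Definition \ref{diffcp}, and for each $i$ set $A_i=\mathbb{Z}^{K_{p_i}}$, the free abelian group on the $k^{|p_i|-1}$ $\mathbb{Z}_k$-colorings of $p_i$, together with its augmentation ideal $\widetilde A_i=\ker(A_i\xrightarrow{\epsilon_i}\mathbb{Z})$. A direct unpacking of the definitions gives natural identifications
\[
\text{Cp}(\theta)\cong\bigotimes_{i=1}^s A_i,\qquad \text{BCp}(\theta)\cong\bigotimes_{i=1}^s\widetilde A_i,
\]
since each rank-one relation of Definition \ref{defcp} (coming from a $\nu\leq\theta$ with $r_f(\nu)=1$) fixes colorings in every tensor slot except one and asks the sum of coefficients in the remaining slot to vanish, i.e.\ it cuts out the augmentation ideal in that slot. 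Intersecting over all such $\nu$ carves out $\bigotimes_i\widetilde A_i$. This immediately delivers freeness of $\text{BCp}(\theta)$ and the claimed rank $\prod_{(p,t)\in\mathrm{Ud}(\theta)}(k^{|p|-1}-1)$.

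Next, for any $\psi\leq\theta$ with undefined positions indexed by a subset $J\subseteq\{1,\dots,s\}$, the completions of $\theta$ extending $\psi$ factor through choices of colorings at the entries outside $J$, so collating $\text{BCp}(\psi)$ over all $\psi\leq\theta$ of rank $|J|=q$ yields
\[
C_q(\varLambda_{\leq\theta})=\bigoplus_{|J|=q}\Bigl(\bigotimes_{j\in J}\widetilde A_j\Bigr)\otimes\Bigl(\bigotimes_{i\notin J}A_i\Bigr),
\]
which is exactly the degree-$q$ summand of the tensor product of the two-term chains $C^{(i)}=(\widetilde A_i\hookrightarrow A_i)$ placed in degrees $1$ and $0$. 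Filling the $i$-th undefined entry of some $\psi$ corresponds to applying the inclusion $\widetilde A_i\hookrightarrow A_i$ in slot $i$, and the Koszul sign $(-1)^{i-1}$ matches the factor $(-1)^{|\psi\lessdot\theta|}$ under the lexicographic convention on $\mathrm{Ud}_s(\theta)$; this identifies the Koszul differential on the tensor product with the $\partial$ of Definition \ref{diffcp}.

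Finally, each $C^{(i)}$ is free with $H_0(C^{(i)})=\mathbb{Z}$ via $\epsilon_i$ and $H_1(C^{(i)})=0$ by injectivity of the inclusion, so K\"unneth gives $H_0(\varLambda_{\leq\theta})\cong\mathbb{Z}$ and $H_n(\varLambda_{\leq\theta})=0$ for $n>0$, verifying condition (2) of Definition \ref{cellular form}. For condition (3), the isomorphism $H_0(\varLambda_{\leq\theta})\cong\overline{\mathbb{Z}}(\theta)=\mathbb{Z}$ is the ``sum of coefficients'' map $\sum k_i\eta_i\mapsto\sum k_i$, and for $\theta\leq\theta'$ the inclusion $\varLambda_{\leq\theta}\hookrightarrow\varLambda_{\leq\theta'}$ obviously commutes with this augmentation, so it induces the identity on $\mathbb{Z}$, matching the constant copresheaf $\overline{\mathbb{Z}}$. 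The main obstacle is purely bookkeeping --- reconciling the Koszul sign on the tensor product with the lexicographic sign $(-1)^{|\psi\lessdot\theta|}$ of Definition \ref{diffcp} --- after which the theorem reduces to a standard K\"unneth computation.
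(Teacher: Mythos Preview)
Your proof is correct and rests on the same core idea as the paper's: decompose $\mathcal{C}_I^m\cong\prod_{(p,t)}\mathcal{C}_p$ and use K\"unneth on the tensor of the two-term complexes $\widetilde A_i\hookrightarrow A_i$. The paper, however, organizes this in two steps: it first invokes the abstract Cartesian-product lemma of Section~4.5 to conclude that $(\mathcal{C}_I^m,\overline{\mathbb{Z}})$ is cellular with cellular form $\bigotimes_{(p,t)}\varLambda(\mathcal{C}_p,\overline{\mathbb{Z}})$, and then separately verifies, via the appendix trick Lemma~\ref{trick of cf}, that the $\text{BCp}$ presentation also satisfies the cellular-form axioms (checking injectivity and middle exactness of $\partial$ by hand), so that uniqueness of cellular forms forces $\text{BCp}(\theta)\cong\bigotimes_i\widetilde A_i$. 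You instead establish the identification $\text{BCp}(\theta)\cong\bigotimes_i\widetilde A_i$ directly from the defining relations and then recognize the whole chain $\varLambda_{\leq\theta}$ as the Koszul tensor in one stroke, which is a bit more economical and avoids the detour through Lemma~\ref{trick of cf}. One small slip in your write-up: the sign you must match is $(-1)^{|\psi'\lessdot\psi|}$ (the position among $\mathrm{Ud}_s(\psi)$, not $\mathrm{Ud}_s(\theta)$), which is precisely the Koszul sign $(-1)^{|\{j\in J:j<j_\ell\}|}$ for the slot being filled---so the match holds, but your sentence should refer to $\psi'\lessdot\psi$ rather than $\psi\lessdot\theta$.
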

\begin{proof}
	For $p\in I$ be a block of $I$, let $\mathcal{C}_p$ be a rank $1$ poset with  Hasse diagram  $$\xymatrix{
		  &? \ar@{-}[dl] \ar@{-}[d] \ar@{-}[dr] \ar@{-}[drr]\\
		 \phi_1 & \phi_2 & \phi_3 &...\\
	}$$
    where $\phi_i$ runs over all $\mathbb{Z}_k$-coloring of $p$, then $\mathcal{C}_{I}^m \cong \prod_{p\in I,t\in  [m]}\mathcal{C}_p$ as poset by definition. Check that $(\mathcal{C}_p,\overline{\mathbb{Z}})$ has cellular form $\varLambda(\mathcal{C}_p,\overline{\mathbb{Z}})_{\phi_i}=\mathbb{Z}$ and $\varLambda(\mathcal{C}_p,\overline{\mathbb{Z}})_{?}$ is group of formal sum $\sum k_i\phi_i$ satisfying $\sum k_i=0$, has rank $k^{|p|-1}-1$. Then $(\mathcal{C}_{I}^m,\overline{\mathbb{Z}})$ is also cellular and we have cellular form $$\varLambda(\mathcal{C}_{I}^m,\overline{\mathbb{Z}})_\theta= \bigotimes_{(p,t)\in I'\times [m]}\varLambda(\mathcal{C}_p,\overline{\mathbb{Z}})_{\theta_{p,t}}$$
    so $\varLambda(\mathcal{C}_{I}^m,\overline{\mathbb{Z}})_\theta$ is a free abelian group with rank $\prod_{(p,t)\in \text{Ud}(\theta)}(k^{|p|-1}-1)$.
     
    Now we apply Lemma \ref{trick of cf} to show that $(\bigoplus_{\theta \in \mathcal{C}_{I}^m}\text{BCp}(\theta),\partial)$  is also a cellular form of $(\mathcal{C}_{I}^m,\overline{\mathbb{Z}})$. The property (1) and (2) of Lemma \ref{trick of cf} holds for $(\bigoplus_{\theta \in \mathcal{C}_{I}^m}\text{BCp}(\theta),\partial)$ by definition. $\partial : \text{Cp}(\theta)\to \bigoplus_{\psi\lessdot\theta}\text{Cp}(\psi)$ is injective by above lemma,  induce a injective map $\partial : \text{BCp}(\theta)\to \bigoplus_{\psi\lessdot\theta}\text{BCp}(\psi)$ since $\text{BCp}(\theta)$ is subgroup of $\text{Cp}(\theta)$. We also need to check the exactness of $\partial$ on the position of $\bigoplus_{\psi\lessdot\theta}\text{BCp}(\psi)$, denote $\eta<_c\psi$ an element $\eta \in \text{Cp}(\psi)$ and $a= \sum_{ij} k_{ij}\eta_i <_c \psi_j \in \bigoplus_{\psi\lessdot\theta}\text{BCp}(\psi)$ for convenient, calculate $\partial(a)$ and notice that every interval in $\mathcal{C}_I^m$ is isomorphic with a free join semilattice, we have $\partial(a)=0\Rightarrow k_{ij_1}=(-1)^{|\psi_{j_1}\lessdot\theta|-|\psi_{j_2}\lessdot\theta|}k_{ij_2}$ for any $i,j_1,j_2$ and then $a = \partial(\sum_i k_{ij_0}\eta_i<_c\theta)$ where $j_0$ is the index such that $|\psi_{j_0}\lessdot\theta|=0$.
    Meanwhile, $a\in \bigoplus_{\psi\lessdot\theta}\text{BCp}(\psi) \Rightarrow \sum_{i,\eta_i<\nu}k_{ij}=0$ for any $j$ and rank $1$ element $\nu$ by definition of $\text{BCp}(-)$, so $\partial(\sum_i k_{ij_0}\eta_i<_c\theta)$ is also an element in $\text{BCp}(\theta)$. Combine above discussion, the property (3) of Lemma \ref{trick of cf} also holds and $(\bigoplus_{\theta \in \mathcal{C}_{I}^m}\text{BCp}(\theta),\partial)$ is cellular form of $(\mathcal{C}_{I}^m,\overline{\mathbb{Z}})$.
    $\text{BCp}(\theta) \cong \varLambda(\mathcal{C}_{I}^m,\overline{\mathbb{Z}})_\theta$ be a free abelian group with rank $\prod_{(p,t)\in \text{Ud}(\theta)}(k^{|p|-1}-1)$ by uniqueness of cellular form.
\end{proof}
\begin{rem}
	The presentation $(\bigoplus_{\theta \in \mathcal{C}_{I}^m}\text{BCp}(\theta),\partial)$ is more convenient to use in Section \ref{prodfib} then the tensor product $\bigotimes_{(p,t)\in I' \times [m]}\varLambda(\mathcal{C}_p,\overline{\mathbb{Z}})$ although they are the same cellular form.
\end{rem}

\subsection{Additive structure of $H^*(F_{\mathbb{Z}_k^m}(\mathbb{C}^m,\Gamma))$}

We know \begin{equation}\label{e2}H^i(F_{\mathbb{Z}_k^m}(\mathbb{C}^m,\Gamma))=H^i(\mathcal{M}(\mathcal{A}))\cong \bigoplus_{x\in \mathcal{P} }\text{Tor}^{\mathcal{P}}_{2\cdot\text{codim}(x)-i}(\delta_x \mathbb{Z},\delta^{M}\mathbb{Z})\end{equation} by Goresky and MacPherson's result. (Let $\text{codim}(x)$ to be the codimension of \textit{complex} subspace $x$)

Pull back $\delta_x \mathbb{Z},\delta^{M}\mathbb{Z}$ by  $\sigma : L_k^m(\Gamma) \to \mathcal{P}$, we have $\sigma^*\delta_x \mathbb{Z}= \delta_{\sigma^{-1}x}\mathbb{Z}$ and $\sigma^*\delta^{M}\mathbb{Z} = \delta^{\hat{0}}\mathbb{Z}$.
Apply Lemma \ref{lem of pull back}, we have 
\begin{equation}\label{e1}
	\text{Tor}^{\mathcal{P}}_*(\delta_x \mathbb{Z},\delta^{M}\mathbb{Z})= \text{Tor}^{L_k^m(\Gamma)}_*(\delta_{\sigma^{-1}x}\mathbb{Z},\delta^{\hat{0}}\mathbb{Z})
\end{equation}
where $L_k^m(\Gamma)$ is more easier to deal with by Grothendieck spectral sequence and cellular methods.

\begin{lem}\label{addstr}
	There is an isomorphism $$\text{Tor}^{L_k^m(\Gamma)}_*(\delta_{\sigma^{-1}x}\mathbb{Z},\delta^{\hat{0}}\mathbb{Z})\cong \bigoplus_{\theta \in \sigma^{-1}x} A^*(L(\Gamma))_{\pi(\theta)} \otimes \text{BCp}(\theta)$$
\end{lem}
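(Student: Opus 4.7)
The plan is to apply Theorem~\ref{ss fib} to the Grothendieck fibration $\pi : L_k^m(\Gamma) \to L(\Gamma)$ with presheaf $\mathcal{F} = \delta_{\sigma^{-1}x}\mathbb{Z}$ on $L_k^m(\Gamma)$ and copresheaf $\mathcal{G} = \delta^{\hat{0}}\mathbb{Z}$ on $L(\Gamma)$. Because $\mathcal{C}_{\hat{0}}^m$ is the one-point poset $\{\hat{0}\}$, the pull-back $\pi^*\delta^{\hat{0}}\mathbb{Z}$ on $L_k^m(\Gamma)$ coincides with the $\delta^{\hat{0}}\mathbb{Z}$ in the statement, so the spectral sequence
\[
E^2_{p,q} = \text{Tor}_q^{L(\Gamma)}\bigl(H_p(\Psi,\delta_{\sigma^{-1}x}\mathbb{Z}),\,\delta^{\hat{0}}\mathbb{Z}\bigr) \;\Longrightarrow\; \text{Tor}^{L_k^m(\Gamma)}_{p+q}(\delta_{\sigma^{-1}x}\mathbb{Z},\delta^{\hat{0}}\mathbb{Z})
\]
converges to the target Tor.

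Next I will identify $E^2$ by combining the cellular form of each fibre with the OS-algebra on the base. Lemma~\ref{compare map0} shows that $\sigma^{-1}x \cap \mathcal{C}_I^m$ is either empty or the singleton $\{\theta_I = \alpha(x)|_I\}$, so $\delta_{\sigma^{-1}x}\mathbb{Z}|_{\mathcal{C}_I^m}$ is $\delta_{\theta_I}\mathbb{Z}$ in the nonempty case. Theorem~\ref{cf1} and Corollary~\ref{form to cellular} then give $H_p(\mathcal{C}_I^m, \delta_{\theta_I}\mathbb{Z}) = \text{BCp}(\theta_I)$ concentrated in degree $p = r_f(\theta_I)$. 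Equation~(\ref{eqcod}) pins $r_f(\theta_I) + m\cdot r(I) = \text{codim}(x)$, so distinct ranks in $L(\Gamma)$ place the nonzero pieces in distinct $p$-levels; consequently every restriction map in the presheaf $H_p(\Psi,\delta_{\sigma^{-1}x}\mathbb{Z})$ between strictly comparable $I < J$ vanishes, and the presheaf decomposes as $\bigoplus_{I:\,r_f(\theta_I)=p} \delta_I\,\text{BCp}(\theta_I)$. Using the OS-algebra of $L(\Gamma)$ as cellular form of $(L(\Gamma),\delta^{\hat{0}}\mathbb{Z})$ (Example~\ref{OS}) together with freeness of $\text{BCp}(\theta_I)$, Corollary~\ref{form to cellular} identifies each term as
\[
\text{Tor}^{L(\Gamma)}_q\bigl(\delta_I\,\text{BCp}(\theta_I),\,\delta^{\hat{0}}\mathbb{Z}\bigr) = A^*(L(\Gamma))_I \otimes \text{BCp}(\theta_I) \quad (q = r(I)),
\]
and zero otherwise. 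Thus $E^2$ has one entry $A^*(L(\Gamma))_I \otimes \text{BCp}(\theta_I)$ at bidegree $(r_f(\theta_I), r(I))$ for each $I$ with $\theta_I$ defined, and these terms biject with the elements of $\sigma^{-1}x$.

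The principal obstacle, and the point where the hypothesis $m > 1$ is used, is to show that this spectral sequence collapses at $E^2$. In the convention of Theorem~\ref{ss fib}, the differential $d_r$ has bidegree $(r-1,-r)$, decreasing total degree by one, so a would-be nontrivial $d_r$ connects $(r_f(\theta_I),r(I))$ to some $(r_f(\theta_{I'}),r(I')) = (r_f(\theta_I)+r-1,\,r(I)-r)$. Substituting both bidegrees into the codimension identity $r_f(\theta_{\cdot}) + m\cdot r(\cdot) = \text{codim}(x)$ forces $r(1-m) = 1$, which admits no integer solution $r \geq 2$ once $m > 1$. Hence every differential vanishes, $E^2 = E^\infty$, and since all $E^\infty$ pieces are free abelian groups there are no extension issues; summing the surviving terms over $I$ (equivalently $\theta = \theta_I \in \sigma^{-1}x$) yields the asserted direct-sum decomposition.
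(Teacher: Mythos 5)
Your proof is correct and takes essentially the same route as the paper: the spectral sequence of Theorem \ref{ss fib} for $\pi:L_k^m(\Gamma)\to L(\Gamma)$, identification of the fiberwise homology via Lemma \ref{compare map0} and the cellular form $\text{BCp}(\theta)$ of Theorem \ref{cf1}, identification of the base Tor via the OS-algebra cellular form, and collapse at $E^2$ because all nonzero entries lie on the single line $p+m\,q=\text{codim}(x)$ forced by Equation \ref{eqcod}. One minor remark: the collapse does not actually use $m>1$ (your equation $r(1-m)=1$ has no solution $r\geq 2$ for any $m\geq 1$), which is consistent with the lemma being stated without that hypothesis; your extra observations that $\pi^*\delta^{\hat{0}}\mathbb{Z}=\delta^{\hat{0}}\mathbb{Z}$ and that freeness of the $E^\infty$ terms rules out extension problems are correct and only make explicit what the paper leaves implicit.
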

\begin{proof}
	Apply the spectral sequence in Theorem \ref{ss fib}, let $\Psi$ be the contravariant functor $I \mapsto \mathcal{C}_I^m$ , presheaf $\mathcal{F}$ on $L_k^m(\Gamma)$ be $\delta_{\sigma^{-1}x}\mathbb{Z}$, and copresheaf $\mathcal{G}$ on $L(\Gamma)$ be $\delta^{\hat{0}}\mathbb{Z}$.  Notice that any element in $\sigma^{-1}(x)$ have the form $\alpha(x)|_I$ by Lemma \ref{compare map0}, so there is no two elements of $\sigma^{-1}x$ located in same fiber $\mathcal{C}_{I}^m$, then the restriction $\mathcal{F}|_{\Psi(I)}$ (as presheaf on $\Psi(I)$) is either zero or $\delta_{\theta}\mathbb{Z}$ for $\theta\in \sigma^{-1}x$. Combine the cellular form of $\mathcal{C}_I^m$, we have that presheaf $H_n(\Psi;\mathcal{F})$ equals $\bigoplus_{\theta}\delta_{\pi(\theta)}\text{BCp}(\theta)$ where $\theta$ runs over elements in $ \sigma^{-1}x$ such that $r_f(\theta)=n$, and equals zero if there is no such $\theta$, so $E^2_{p,q} = \text{Tor}_q^{L(\Gamma)}(H_p(\Psi;\mathcal{F}),\delta^{\hat{0}}\mathbb{Z})=\bigoplus_\theta\text{Tor}_q^{L(\Gamma)}(\delta_{\pi(\theta)}\text{BCp}(\theta),\delta^{\hat{0}}\mathbb{Z})$ for $\theta\in\sigma^{-1}x,r_f(\theta)=p$. 
	
	We have known that $(L(\Gamma),\delta^{\hat{0}}\mathbb{Z})$ is cellular and its cellular form is given by OS-algebra $A^*(L(\Gamma))$ in Example \ref{OS}, which means $\text{Tor}_q^{L(\Gamma)}(\delta_IF,\delta^{\hat{0}}\mathbb{Z})=A^*(L(\Gamma))_I\otimes F$ for $r(I)=q$ and $0$ otherwise, so $E^2_{p,q} = \bigoplus_{\theta}A^*(L(\Gamma))_{\pi(\theta)}\otimes \text{BCp}(\theta)$ where  $\theta\in\sigma^{-1}x,r_f(\theta)=p, r_b(\theta)=q$. Notice that $r_f(\theta)=r_f(\alpha(x))+ m(r_b(\alpha(x))-r_b(\theta))$ for any $\theta\in\sigma^{-1}x$, so the nonzero element of $E^2$ page only appears on index $(p,q)$ such that $p= r_f(\alpha(x))+m(r_b(\alpha(x))-q)$, then this spectral sequence collapse on $E^2$ and we have an isomorphism
	\begin{equation}\label{e3}
		\text{Tor}^{L_k^m(\Gamma)}_n(\delta_{\sigma^{-1}x}\mathbb{Z},\delta^{\hat{0}}\mathbb{Z})\cong \bigoplus_{\theta \in \sigma^{-1}x,r_b(\theta)+r_f(\theta)=n} A^*(L(\Gamma))_{\pi(\theta)} \otimes \text{BCp}(\theta)
	\end{equation} 
	
\end{proof}

Combine the Equation \ref{e2}, \ref{e1} and \ref{e3}, we obtain the cohomology group of $F_{\mathbb{Z}_k^m}(\mathbb{C}^m,\Gamma)$
\begin{thm}
	For $m>1$, $H^*(F_{\mathbb{Z}_k^m}( \mathbb{C}^m, \Gamma))$ is $L_k^m(\Gamma)$-graded,
	every piece \\$H^*(F_{\mathbb{Z}_k^m}( \mathbb{C}^m, \Gamma))_{\theta}$ is a free abelian group $A^*(L(\Gamma))_{\pi(\theta)}\otimes \text{BCp}(\theta)$ for any $\theta \in L_k^m(\Gamma)$, has rank 
	\begin{equation}\label{eq of rank1}
		|\mu(\hat{0},\pi(\theta))|\prod_{(p,t)\in \text{Ud}(\theta)} (k^{|p|-1}-1)
	\end{equation}
	where $A^*(L(\Gamma))$ is OS-algebra of bond lattice $L(\Gamma)$, $A^*(L(\Gamma))_{\pi(\theta)}$ is the piece of grading $\pi(\theta)$, $\mu(-,-)$ is M\"{o}bius function.
	Every element in $H^*(F_{\mathbb{Z}_k^m}( \mathbb{C}^m, \Gamma))_{\theta}$ has degree $(2m-1)r_b(\theta)+r_f(\theta)$. 
\end{thm}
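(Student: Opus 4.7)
The plan is to reduce the computation in stages, using the three main engines already set up in the paper: the de Longueville--Schultz formula (Theorem \ref{ssa}), the pull-back invariance of $\text{Tor}$ along a surjective semilattice map (Lemma \ref{lem of pull back}), and the Grothendieck fibration spectral sequence (Theorem \ref{ss fib}) applied to $\pi : L_k^m(\Gamma) \to L(\Gamma)$. At the bottom of the tower, the fiber $\mathcal{C}_I^m$ has explicit cellular form $\text{BCp}$ (Theorem \ref{cf1}) and the base $L(\Gamma)$ has cellular form given by its OS-algebra (Example \ref{OS}), so everything collapses to a bookkeeping exercise.

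First, I would invoke Theorem \ref{ssa} to identify $H^i(F_{\mathbb{Z}_k^m}(\mathbb{C}^m,\Gamma))$ with $\bigoplus_{x\in\mathcal{P}}\text{Tor}^{\mathcal{P}}_{2\,\text{codim}(x)-i}(\delta_x\mathbb{Z},\delta^M\mathbb{Z})$, and then replace each summand by $\text{Tor}^{L_k^m(\Gamma)}_\ast(\delta_{\sigma^{-1}x}\mathbb{Z},\delta^{\hat 0}\mathbb{Z})$ via Lemma \ref{lem of pull back}, using that $\sigma$ is a surjective map of join semilattices (established in Section 5.2). This is exactly the content of Lemma \ref{addstr}, which already exhibits each $\text{Tor}^{L_k^m(\Gamma)}_n(\delta_{\sigma^{-1}x}\mathbb{Z},\delta^{\hat 0}\mathbb{Z})$ as $\bigoplus_{\theta\in\sigma^{-1}x,\ r_b(\theta)+r_f(\theta)=n} A^\ast(L(\Gamma))_{\pi(\theta)}\otimes\text{BCp}(\theta)$. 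Summing over $x\in\mathcal{P}$ and reindexing by $\theta\in L_k^m(\Gamma)$ (every $\theta$ lies in a unique $\sigma^{-1}x$) immediately produces the claimed $L_k^m(\Gamma)$-grading with the stated summands.

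Next I would verify the two numerical assertions. For the rank, Theorem \ref{cf1} gives $\text{rank}(\text{BCp}(\theta))=\prod_{(p,t)\in\text{Ud}(\theta)}(k^{|p|-1}-1)$, while the standard property of OS-algebras of geometric lattices gives $\text{rank}(A^\ast(L(\Gamma))_{\pi(\theta)})=|\mu(\hat 0,\pi(\theta))|$; their product is precisely Equation \eqref{eq of rank1}. For the cohomological degree, Equation \eqref{eqcod} combined with Lemma \ref{compare map0} shows $r_f(\theta)+m\cdot r_b(\theta)=\text{codim}(x)$ for every $\theta\in\sigma^{-1}x$, so an element of Tor-degree $n=r_f(\theta)+r_b(\theta)$ contributes to cohomological degree
\[
i=2\,\text{codim}(x)-n=2\bigl(r_f(\theta)+m\cdot r_b(\theta)\bigr)-\bigl(r_f(\theta)+r_b(\theta)\bigr)=(2m-1)r_b(\theta)+r_f(\theta),
\]
as required. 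The hypothesis $m>1$ enters here: only when $m>1$ does the condition $r_f(\theta)+m\cdot r_b(\theta)=\text{codim}(x)$ force the nonzero entries of the $E^2$ page to sit on a single antidiagonal, which is what guarantees that the Grothendieck spectral sequence in Lemma \ref{addstr} collapses at $E^2$ and that each $\theta$ contributes to a single cohomological degree.

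The hard part is not in the present theorem at all but in the three preparatory results that make this reduction possible: the construction of the comparison map $\sigma$ together with Lemma \ref{compare map0} controlling its fibers, the explicit cellular form for $\mathcal{C}_I^m$ established in Theorem \ref{cf1}, and the compatibility of the pull-back $\sigma^\ast$ with Tor (Lemma \ref{lem of pull back}). Once these are in hand, the proof of the theorem is essentially a direct quote of Lemma \ref{addstr} plus the elementary degree and rank calculations above.
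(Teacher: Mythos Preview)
Your proposal is correct and follows exactly the paper's own route: the theorem is deduced by combining Equation~\eqref{e2} (Theorem~\ref{ssa}), Equation~\eqref{e1} (Lemma~\ref{lem of pull back} applied to $\sigma$), and Equation~\eqref{e3} (Lemma~\ref{addstr}), followed by the rank count from Theorem~\ref{cf1} and the degree computation via Equation~\eqref{eqcod}. One small imprecision: your explanation of where $m>1$ enters is garbled---for $m=1$ the nonzero $E^2$ entries actually \emph{do} sit on a single antidiagonal $p+q=\mathrm{const}$, whereas for $m>1$ they sit on a line of slope $-m$; the collapse in Lemma~\ref{addstr} is asserted in the paper without any restriction on $m$, so the hypothesis $m>1$ is not needed for the spectral-sequence argument itself.
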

~
\subsection{Details about join operation of $L_k^m(\Gamma)$}~
\vskip .5em
Before studying the cup product on $H^*(F_{\mathbb{Z}_k^m}(\mathbb{C}^m,\Gamma))$, we need some lemma about join operation of $L_k^m(\Gamma)$. We call $I,J$ be \textbf{independent} in a geometric lattice if $r(x)+r(y)=r(x\vee y)$. 

\begin{lem}\label{lemjoin0}
	Let $I,J$ be independent in $L(\Gamma)$, $\theta \in \mathcal{C}_I^m, \psi \in \mathcal{C}_J^m$ and $r_f(\theta)=r_f(\psi)=0$, then $\theta \vee \psi$ is also a rank zero element in $\mathcal{C}_{I\vee J}^m$.
\end{lem}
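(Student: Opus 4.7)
The plan is to unpack what $r_f(\theta \vee \psi) = 0$ means and reduce to a combinatorial extension problem on each block of $I \vee J$ separately. Since $r_f(\theta) = r_f(\psi) = 0$, both $\theta$ and $\psi$ contain no "$?$" entries, so by the construction of $\theta \vee \psi$ given in the proof of Lemma \ref{lemjoin}, the only way an entry $(\theta \vee \psi)_{p,t}$ can fail to be defined is if the colorings $\{x_i\}$ on the blocks $\{q_i\}$ of $I'$ and $J'$ contained in $p$ admit no common extension to a function $f : p \to \mathbb{Z}_k$. So the statement reduces to showing that for every $p \in (I \vee J)'$ and every $t \in [m]$, such an $f$ exists.

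The next step is to encode this extension problem combinatorially. I would introduce a bipartite multigraph $H_p$ whose vertex set is the disjoint union of all $I$-blocks contained in $p$ and all $J$-blocks contained in $p$ (including singletons, for the counting below), and whose edge set is $p$ itself: each element $x \in p$ is an edge joining its unique $I$-block to its unique $J$-block. Because $p$ is a single block of the join $I \vee J$, the graph $H_p$ is connected. Picking a spanning tree and propagating data along it, the existence of a common extension $f$ is equivalent to the absence of inconsistency on the non-tree edges, because the $\mathbb{Z}_k$-action on representatives is free at any single point and therefore the constraint carried by an edge is merely that $f$ take a prescribed value at the one overlap element labelling that edge.

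The key step is to translate the independence condition $r(I) + r(J) = r(I \vee J)$ in the bond lattice $L(\Gamma)$ into the statement that each $H_p$ is a tree. Using the rank formula $r(K) = n - \#K$, where $\#K$ is the number of blocks of $K$, independence becomes $n + \#(I \vee J) = \#I + \#J$. On the other hand, summing the inequality $|p| \ge |V(H_p)| - 1$ (which holds for every connected multigraph) over all $p \in I \vee J$, the left side totals $n$ and the right side totals $\#I + \#J - \#(I \vee J)$. So independence forces equality at every $p$, meaning that every $H_p$ is actually a tree.

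Finally, I would finish by rooting $H_p$ at any vertex and defining $f$ inductively on the tree: fix a representative of the coloring at the root block, and at each newly visited block arriving along an edge $x$, shift its representative using the $\mathbb{Z}_k$-action so that it matches the already-defined $f(x)$ at the single overlap point. Since $H_p$ is a tree there are no further cycle constraints to check, and singleton blocks appear as leaves carrying trivial coloring data and therefore impose no extra condition. This produces a globally defined $f$, so $(\theta \vee \psi)_{p,t}$ is a genuine coloring for every $(p,t)$, giving $r_f(\theta \vee \psi) = 0$. The main obstacle I anticipate is setting up $H_p$ and the counting cleanly enough that independence translates precisely into the tree condition, in particular being careful to include the singletons of $I$ and $J$ among the vertices (so that $|V(H_p)|$ totals to $\#I + \#J$) while still recognising that these contribute no constraint at the extension step.
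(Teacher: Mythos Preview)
Your argument is correct and takes a genuinely different route from the paper's. The paper gives a short coset argument: writing $G=\mathbb{Z}_k^m$ and $G_{\hat 0}$ for the group of maps $[n]\to G$, it identifies rank-zero elements of $\mathcal{C}_I^m$ with cosets of the subgroup $G_I=\{f:f\text{ constant on blocks of }I\}$, observes $G_I\cap G_J=G_{I\vee J}$, and uses the rank identity $r(I)+r(J)=r(I\vee J)$ to deduce $G_I+G_J=G_{\hat 0}$; hence any coset of $G_I$ meets any coset of $G_J$ in a coset of $G_{I\vee J}$, which is the desired rank-zero join.

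Your approach replaces this with a direct combinatorial extension: the bipartite multigraph $H_p$ together with the counting $n+\#(I\vee J)=\#I+\#J$ forces each $H_p$ to be a tree, and then the coloring is propagated along the tree. The paper's proof is shorter and more conceptual, but yours is more constructive (it actually builds the function $f$) and isolates a useful combinatorial translation of independence---namely that the block-incidence graphs $H_p$ are all trees---which is not visible in the coset argument. One small point worth making explicit in your write-up: the tree condition rules out multi-edges, so any $I$-block and $J$-block inside $p$ meet in at most one point, which is exactly what guarantees the inductive definition of $f$ never assigns two values to the same element.
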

\begin{proof}
	Write $G=\mathbb{Z}_k^m$, $G_{\hat{0}}$ be the group of map $[n] \to G$ and $G_{I}$ be the subgroup $$\{f\in G_{\hat{0}}| f(i)=f(j) \text{ if } i,j \text{ contained in same block of } I\}$$ for any $I\in L(\Gamma)$. The rank zero elements in $\mathcal{C}_I^m$ is one to one correspondence with coset of $G_{I}$ in $G_{\hat{0}}$ by our definition of $\mathbb{Z}_k$-coloring. $G_I\cap G_J = G_{I\vee J}$. In the case of independent $I,J$, we have $r(x)+r(y)=r(x\vee y)$, then $G_I + G_J = G_{\hat{0}}$ by calculate the rank of each side, so any coset  of $G_{I}$ and any coset of $G_{J}$ have a nonempty intersection as a coset of $G_{I\vee J}$ in $G_{\hat{0}}$, which means, for any rank zero elements $\theta \in \mathcal{C}_I^m,\psi \in \mathcal{C}_J^m$, we have a  rank zero element $x\in \mathcal{C}_{I\vee J}^m$ such that $\theta<x,\psi<x$, it must equals the join $\theta\vee\psi$ since $r_f(x)=0$ and $\theta\vee\psi \in \mathcal{C}_{I\vee J}^m$.
\end{proof}

\begin{lem}\label{lemjoin1}
	For independent $I,J \in L(\Gamma)$ and $\theta \in \mathcal{C}_I^m, \psi \in \mathcal{C}_J^m$, we have $$r_f(\theta\vee \psi)\leq r_f(\theta)+r_f(\psi)$$
\end{lem}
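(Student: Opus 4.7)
The plan is to proceed by induction on $r_f(\theta)+r_f(\psi)$, using Lemma \ref{lemjoin0} as the base case and Lemma \ref{lemjoin} to control how $r_f$ behaves under a single covering step in one of the fibers.

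For the base case $r_f(\theta)=r_f(\psi)=0$, independence of $I,J$ together with Lemma \ref{lemjoin0} gives $r_f(\theta\vee\psi)=0$, which is exactly what we need. For the inductive step, assume $r_f(\theta)+r_f(\psi)>0$, and without loss of generality suppose $r_f(\theta)\ge 1$. Then, since $r_f$ is the rank function on the graded poset $\mathcal{C}_I^m$, there exists $\nu\lessdot\theta$ in $\mathcal{C}_I^m$ with $r_f(\nu)=r_f(\theta)-1$; the pair $(\nu,\psi)$ still sits over the independent pair $(I,J)$ because $\pi(\nu)=\pi(\theta)=I$, so the inductive hypothesis applies and yields $r_f(\nu\vee\psi)\le r_f(\nu)+r_f(\psi)=r_f(\theta)+r_f(\psi)-1$.

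Now I invoke Lemma \ref{lemjoin}, which says that either $\nu\vee\psi=\theta\vee\psi$ or $\nu\vee\psi\lessdot\theta\vee\psi$. Since $\pi$ preserves joins, both elements lie in the single fiber $\mathcal{C}_{I\vee J}^m$, so the covering relation $\nu\vee\psi\lessdot\theta\vee\psi$ (when it occurs) takes place within that fiber and therefore adds exactly $1$ to the value of $r_f$. In the first case $r_f(\theta\vee\psi)=r_f(\nu\vee\psi)\le r_f(\theta)+r_f(\psi)-1$; in the second case $r_f(\theta\vee\psi)=r_f(\nu\vee\psi)+1\le r_f(\theta)+r_f(\psi)$. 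Either way the desired inequality holds, completing the induction.

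There is no serious obstacle here: Lemma \ref{lemjoin0} handles the rigid rank-zero situation, and Lemma \ref{lemjoin} is precisely the dichotomy needed to push an inductive argument through covering relations. The only subtlety to be mindful of is that one must check the cover $\nu\vee\psi\lessdot\theta\vee\psi$ really happens inside a single fiber $\mathcal{C}_{I\vee J}^m$, so that it contributes $+1$ to $r_f$ rather than a change in $r_b$; this is immediate from $\pi(\nu\vee\psi)=I\vee J=\pi(\theta\vee\psi)$ and the definition of the fibration order.
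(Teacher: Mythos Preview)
Your proof is correct and follows essentially the same approach as the paper: induction on $r_f(\theta)+r_f(\psi)$, with Lemma \ref{lemjoin0} as the base case and the dichotomy of Lemma \ref{lemjoin} for the inductive step. Your additional remark that the cover $\nu\vee\psi\lessdot\theta\vee\psi$ occurs within the fiber $\mathcal{C}_{I\vee J}^m$ (so that it contributes $+1$ to $r_f$) is a helpful clarification the paper leaves implicit.
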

\begin{proof}
	We prove it by induction on the sum $r_f(\theta)+r_f(\psi)$. For $r_f(\theta)+r_f(\psi)=0$, $r_f(\theta\vee \psi)=0$ by last lemma. For $r_f(\theta)+r_f(\psi)>0$, without lose of generality, assume $r_f(\theta)>0$, choose an element $\nu \lessdot \theta$, then $r_f(\nu \vee \psi)\leq r_f(\theta)-1+r_f(\psi)$ by induction hypothesis, $r_f(\nu \vee \psi)=r_f(\theta\vee \psi)-1$ or $r_f(\nu \vee \psi)=r_f(\theta\vee \psi)$ by Lemma \ref{lemjoin}, then $r_f(\theta\vee \psi)\leq r_f(\theta)+r_f(\psi)$.
\end{proof}

\begin{defn}
	We call twe elements $\theta \in \mathcal{C}_I^m, \psi \in \mathcal{C}_J^m$ is \textbf{independent} in $L_k^m(\Gamma)$ if $I,J$ is independent and $r_f(\theta\vee \psi)= r_f(\theta)+r_f(\psi)$.
\end{defn}

Assume $\theta\in\mathcal{C}_I^m$ and $\eta$ be a completion of $\theta$, note that the interval $[\eta,\theta]\subset \mathcal{C}_I^m$ is a free join semilattice generated by $\text{Ud}(\theta)$. 
\begin{lem}\label{lemjoin2}
	Let $\theta \in \mathcal{C}_I^m, \psi \in \mathcal{C}_J^m$ be independent in $L_k^m(\Gamma)$ and $\eta,\mu$ is completion of $\theta,\psi$, then $$[\eta,\theta]\times[\mu,\psi]\xrightarrow{\vee} [\eta\vee\mu,\theta\vee\psi]$$ is an isomorphism of free join semilattice.
\end{lem}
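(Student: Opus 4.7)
My plan is to recognize both sides as free join semilattices on finite sets and then verify the join map realizes a bijection between their generating sets. By the remark preceding the lemma, $[\eta,\theta]$ is a free join semilattice on $\text{Ud}(\theta)$, and $[\mu,\psi]$ on $\text{Ud}(\psi)$; the product is then free on the disjoint union $\text{Ud}(\theta)\sqcup\text{Ud}(\psi)$. By Lemma~\ref{lemjoin0}, $\eta\vee\mu$ is rank zero in $\mathcal{C}^m_{I\vee J}$, so $[\eta\vee\mu,\theta\vee\psi]$ is free on $\text{Ud}(\theta\vee\psi)$. The independence hypothesis gives $|\text{Ud}(\theta\vee\psi)|=r_f(\theta\vee\psi)=r_f(\theta)+r_f(\psi)=|\text{Ud}(\theta)|+|\text{Ud}(\psi)|$, so the two generating-set cardinalities already agree.

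Next I would check that the join map is a morphism of join semilattices (by associativity and commutativity of $\vee$) and lands in $[\eta\vee\mu,\theta\vee\psi]$ (by monotonicity of $\vee$). To identify it, define a map $\iota:\text{Ud}(\theta)\sqcup\text{Ud}(\psi)\to\text{Ud}(\theta\vee\psi)$ by sending $(p,t)$ to $(q,t)$, where $q$ is the unique block of $I\vee J$ containing $p$. The construction of $\vee$ in the proof of Lemma~\ref{lemjoin} shows that any $?$ at $(p,t)$ in $\theta$ or $\psi$ forces a $?$ at $(q,t)$ in $\theta\vee\psi$, so $\iota$ indeed lands in $\text{Ud}(\theta\vee\psi)$. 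Re-examining that construction also yields $r_f(\theta\vee\psi)\le r_f(\theta)+r_f(\psi)$ (reproving Lemma~\ref{lemjoin1}), with equality iff (i) no two input undefined entries collide under $\iota$, and (ii) no new undefined entry of $\theta\vee\psi$ arises from incompatible defined inputs. The independence hypothesis asserts this equality, so $\iota$ is injective by (i) and surjective by (ii); hence bijective.

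Finally I would verify that the join map literally realizes $\iota$ on atoms. An atom $\nu\gtrdot\eta$ of $[\eta,\theta]$ corresponds to some $(p,t)\in\text{Ud}(\theta)$; $\nu$ differs from $\eta$ only by turning $(p,t)$ into $?$. Unwinding the join, $\nu\vee\mu$ differs from $\eta\vee\mu$ only at $\iota(p,t)=(q,t)$, which becomes $?$, so $\nu\vee\mu$ is precisely the atom of $[\eta\vee\mu,\theta\vee\psi]$ at position $(q,t)$. The alternative permitted by Lemma~\ref{lemjoin}, namely $\nu\vee\mu=\eta\vee\mu$, is excluded because $(\eta\vee\mu)|_I=\eta$ (as $\eta\vee\mu$ is complete) while $\nu\not\le\eta$. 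The symmetric statement holds for atoms on the $\psi$-side. Since a bijection between generating sets of free join semilattices extends uniquely to an isomorphism of join semilattices, this finishes the proof. The crux is the combinatorial identification of how independence forces $\iota$ to be a bijection; everything else is essentially bookkeeping.
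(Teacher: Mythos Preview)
Your proof is correct and arrives at the same conclusion, but the route differs from the paper's. The paper argues more abstractly: it first shows by induction (using Lemma~\ref{lemjoin} and Lemma~\ref{lemjoin1}) that \emph{every} pair $x\in[\eta,\theta]$, $y\in[\mu,\psi]$ is independent, so the join map is rank-preserving; then it observes that a rank-preserving join-morphism between free join semilattices whose top elements have the same rank is automatically an isomorphism. You instead identify the atoms on both sides explicitly, build the map $\iota:\text{Ud}(\theta)\sqcup\text{Ud}(\psi)\to\text{Ud}(\theta\vee\psi)$ by hand, and show independence forces $\iota$ to be a bijection. Your argument is longer but has a concrete payoff: the bijection $\iota$ you construct is exactly the one the paper extracts \emph{after} the lemma to define the permutation $|\theta,\psi|$, so you have effectively folded the subsequent remark into the proof. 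The paper's version is quicker but leaves the identification of $\iota$ as a separate step.
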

\begin{proof}
	For any elements $x\in [\eta,\theta],y \in [\mu,\psi]$, $x,y$ is also independent by Lemma \ref{lemjoin}, \ref{lemjoin1} and an induction. Then the map $[\eta,\theta]\times[\mu,\psi]\xrightarrow{\vee} [\eta\vee\mu,\theta\vee\psi]$ is a morphism of free join semilattice which preserve rank and the maximal element of each side have the same rank, so it must be an isomorphism.
\end{proof}

Note that the left side $[\eta,\theta]\times[\mu,\psi]$ is a free join semilattice generated by $\text{Ud}(\theta)\sqcup\text{Ud}(\psi)$ and the right side is a free join semilattice generated by $\text{Ud}(\theta\vee \psi)$, then above isomorphism give us a bijection $ \text{Ud}(\theta)\sqcup\text{Ud}(\psi) \to \text{Ud}(\theta\vee \psi)$, also denoted by $\vee$.

\begin{defn}[Permutation $|\theta,\psi|$]\label{permu}
	As we do in Section 5.3, sort $\text{Ud}(\theta)$ by lexicographic order of $I\times [m]$, denote $\text{Ud}_s(\theta)$ the sorted sequence. Let $\text{Ud}_s(\theta)\sqcup\text{Ud}_s(\psi)$ be the concatenation of $\text{Ud}_s(\theta)$ and $\text{Ud}_s(\psi)$, then the sequence $\vee(\text{Ud}_s(\theta)\sqcup\text{Ud}_s(\psi))$ is a permutation of sequence $\text{Ud}_s(\theta\vee \psi)$ for independent $\theta,\psi$ since $\vee : \text{Ud}(\theta)\sqcup\text{Ud}(\psi) \to \text{Ud}(\theta\vee \psi)$ is a bijection. Denote $|\theta,\psi|$ this permutation for independent $\theta,\psi$.
\end{defn}
\vskip .5em
\subsection{Multiplicative structure of cellular form of $\mathcal{C}_I^m$}\label{prodfib}~
\vskip .5em
In this subsection, let $I,J$ be independent elements in $L(\Gamma)$, then $$\vee : \mathcal{C}_I^m \times \mathcal{C}_J^m \to \mathcal{C}_{I\vee J}^m$$ is a morphism of graded poset that do not increasing the rank by Lemma \ref{lemjoin1}, so there exists a morphism of cellular form 
$$\Phi : \varLambda(\mathcal{C}_I^m \times \mathcal{C}_J^m, \overline{\mathbb{Z}})_{(\theta,\psi)}=\text{BCp}(\theta)\otimes\text{BCp}(\psi) \to \varLambda(\mathcal{C}_{I\vee J}^m, \overline{\mathbb{Z}})_{\theta\vee\psi}=\text{BCp}(\theta\vee\psi)$$ 
associated with join $\vee : \mathcal{C}_I^m \times \mathcal{C}_J^m \to \mathcal{C}_{I\vee J}^m$ and canonical $\vee$-homomorphism $\overline{\mathbb{Z}} \rightsquigarrow \overline{\mathbb{Z}}$. We will give an explicit formula of $\Phi$ in this subsection.

\begin{lem}
	For independent $\theta,\psi$, define a product $\times:\text{Cp}(\theta) \otimes \text{Cp}(\psi) \to \text{Cp}(\theta\vee\psi)$ given by linear extension of $\eta \otimes\nu \mapsto \text{sgn}(|\theta,\psi|) \eta \vee \nu$ where $\eta,\nu$ is completion of $\theta,\psi$ respectively (note that $r_f(\eta \vee \nu)=0$ by Lemma \ref{lemjoin0}, so this map is well defined). Then the product $\times$ is commutative with differentials in Definition \ref{diffcp}.
\end{lem}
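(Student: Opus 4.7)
The plan is to establish the graded Leibniz rule
\[
\partial(\eta \times \nu) \;=\; (\partial \eta)\times \nu \;+\; (-1)^{r_f(\theta)}\,\eta \times (\partial \nu)
\]
for each pair of completions $\eta$ of $\theta$ and $\nu$ of $\psi$, where the right-hand side is read term by term: for every $\theta'\lessdot\theta$ the pair $(\theta',\psi)$ is still independent (by Lemmas \ref{lemjoin} and \ref{lemjoin1}, using that $r_f(\theta\vee\psi)=r_f(\theta)+r_f(\psi)$ forces $\theta'\vee\psi\lessdot\theta\vee\psi$), so the summand $(\partial\eta)\times\nu$ lands in $\bigoplus_{\theta'\lessdot\theta}\text{Cp}(\theta'\vee\psi)$, and symmetrically on the other side. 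From this Leibniz identity the asserted commutativity of $\times$ with the differentials of Definition \ref{diffcp} is immediate.

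First I would unwind both sides. Because $\eta$ is a completion of $\theta$, for each undefined entry $a_i\in\text{Ud}_s(\theta)$ there is exactly one cover $\psi'_i\lessdot\theta$ containing $\eta$, namely the cover that fills $a_i$ with the colour $\eta$ prescribes; hence $\partial\eta = \sum_{i=1}^{r_f(\theta)}(-1)^{i-1}\eta$ viewed in $\bigoplus_i\text{Cp}(\psi'_i)$, and likewise for $\partial\nu$. Using the bijection $\vee : \text{Ud}(\theta)\sqcup\text{Ud}(\psi)\to\text{Ud}(\theta\vee\psi)$ from Section 5.5, the covers of $\theta\vee\psi$ containing $\eta\vee\nu$ are indexed by the undefined entries of $\theta\vee\psi$; the one corresponding to $\vee(a_i)$ is precisely $\psi'_i\vee\psi$, with $\theta\vee\psi'_j$ in the dual case. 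Consequently, restricted to any fixed target $\text{Cp}(\psi'_i\vee\psi)$ or $\text{Cp}(\theta\vee\psi'_j)$, both sides of the putative identity yield a scalar multiple of the same generator $\eta\vee\nu$, and the whole lemma reduces to matching these scalars.

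On the left-hand side the coefficient of $\eta\vee\nu$ in $\text{Cp}(\psi'_i\vee\psi)$ is $\text{sgn}(|\theta,\psi|)\cdot(-1)^{k-1}$, where $k$ is the position of $\vee(a_i)$ in $\text{Ud}_s(\theta\vee\psi)$; on the right it is $(-1)^{i-1}\text{sgn}(|\psi'_i,\psi|)$. To compare these I would invoke the elementary fact that if a bijection $\sigma$ of finite sequences sends source-position $p$ to target-position $q$ and $\sigma'$ is obtained by deleting this matched pair, then $\text{sgn}(\sigma)=(-1)^{p+q}\text{sgn}(\sigma')$. Applied to $\sigma = |\theta,\psi|$ with the entry $\vee(a_i)$ deleted, the source position of $a_i$ in the concatenation $\text{Ud}_s(\theta)\sqcup\text{Ud}_s(\psi)$ is simply $i$ (since $\text{Ud}_s(\theta)$ comes first), giving $\text{sgn}(|\theta,\psi|)=(-1)^{i+k}\text{sgn}(|\psi'_i,\psi|)$; combined with the $(-1)^{k-1}$ this produces exactly $(-1)^{i-1}\text{sgn}(|\psi'_i,\psi|)$, as required. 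The $\psi$-side computation is identical except that the source position of $b_j$ in the concatenation is $r_f(\theta)+j$, which shifts the sign by $(-1)^{r_f(\theta)}$ and reproduces the prefactor in the Leibniz rule.

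The main obstacle is purely combinatorial: keeping the two orderings, namely the lexicographic order on $\text{Ud}_s$ and the concatenation order on $\text{Ud}_s(\theta)\sqcup\text{Ud}_s(\psi)$, consistently aligned throughout, and verifying the $(-1)^{p+q}$ formula for deletion of a matched pair. No further homological input is needed beyond the bijection $\vee$ of Section 5.5 and this permutation identity, so once the sign bookkeeping is settled the chain-map property follows term by term.
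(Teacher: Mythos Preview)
Your proof is correct and follows essentially the same approach as the paper: both compute $\partial\circ\times$ and $\times\circ\partial$ on generators, match the resulting summands via the bijection $\vee:\text{Ud}(\theta)\sqcup\text{Ud}(\psi)\to\text{Ud}(\theta\vee\psi)$ of Lemma \ref{lemjoin2}, and reduce everything to the two sign identities
\[
\text{sgn}(|\theta',\psi|)(-1)^{|\theta'\lessdot\theta|} = \text{sgn}(|\theta,\psi|)(-1)^{|\theta'\vee\psi\lessdot\theta\vee\psi|},\qquad
\text{sgn}(|\theta,\psi'|)(-1)^{|\psi'\lessdot\psi|} = (-1)^{r_f(\theta)}\text{sgn}(|\theta,\psi|)(-1)^{|\theta\vee\psi'\lessdot\theta\vee\psi|}.
\]
The paper simply asserts these hold ``by an elementary combinatorial argumentation'' and omits the details; you go further and supply the mechanism, namely the Laplace-expansion identity $\text{sgn}(\sigma)=(-1)^{p+q}\text{sgn}(\sigma')$ for deleting a matched pair at source position $p$ and target position $q$. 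Your observation that $(\theta',\psi)$ remains independent whenever $\theta'\lessdot\theta$ (forcing $\theta'\vee\psi\lessdot\theta\vee\psi$ rather than equality, via Lemmas \ref{lemjoin} and \ref{lemjoin1}) is exactly what is needed to identify the deleted permutation with $|\theta',\psi|$, so the bookkeeping closes.
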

\begin{proof}
	We denote $(\eta <_c \theta)$ the generator $\eta$ in group $\text{Cp}(\theta)$, then for any $\eta,\nu$  completes $\theta,\psi$,
	\begin{equation}\label{ep1}
		\begin{aligned}
			\times \circ\partial((\eta <_c \theta)\otimes(\nu <_c \psi))=\sum_{\theta'\lessdot\theta,\theta'\geq \eta}\text{sgn}(|\theta',\psi|)(-1)^{|\theta'\lessdot\theta|}(\eta\vee \nu <_c \theta'\vee \psi) \\
			+(-1)^{r_f(\theta)}\sum_{\psi'\lessdot\psi,\psi'\geq\nu}\text{sgn}(|\theta,\psi'|)(-1)^{|\psi'\lessdot\psi|}(\eta\vee \nu <_c \theta\vee \psi')
		\end{aligned}
	\end{equation}
	On the other side,
	\begin{equation}\label{ep2}
		\partial \circ \times ((\eta <_c \theta)\otimes(\nu <_c \psi))=\text{sgn}(|\theta,\psi|)\sum_{x\lessdot \theta\vee\psi, x\geq \eta\vee\nu} (-1)^{|x\lessdot \theta\vee\psi|}(\eta\vee\nu <_c x)
	\end{equation}
	We know there is a one to one correspondence $$\{\theta'|\theta'\lessdot\theta, \theta'\geq \eta \}\sqcup \{\psi'|\psi'\lessdot\psi,\psi'\geq\nu\} \xrightarrow{(-\vee \psi) \sqcup (\theta\vee -)} \{x|x\lessdot \theta\vee\psi, x\geq \eta\vee\nu\}$$ be Lemma \ref{lemjoin2}, so expression \ref{ep1} and \ref{ep2} equals since 
	$$\text{sgn}(|\theta',\psi|)(-1)^{|\theta'\lessdot\theta|} = \text{sgn}(|\theta,\psi|)(-1)^{|\theta'\vee \psi\lessdot \theta\vee\psi|}$$
	$$\text{sgn}(|\theta,\psi'|)(-1)^{|\psi'\lessdot\psi|} = (-1)^{r_f(\theta)}\text{sgn}(|\theta,\psi|)(-1)^{|\theta\vee \psi'\lessdot \theta\vee\psi|}$$
	The above two equations can be checked by an elementary combinatorial argumentation and we omit the details.
\end{proof}

\begin{thm}\label{mf1}
	Let $I,J$ be independent in $L(\Gamma)$, $\vee: \mathcal{C}_I^m \times \mathcal{C}_J^m \to \mathcal{C}_{I\vee J}^m$ be the join operation in $L_k^m(\Gamma)$,
	then above product $\times : \text{Cp}(\theta) \otimes \text{Cp}(\psi) \to \text{Cp}(\theta\vee\psi)$ induce a well defined map $\Phi: \text{BCp}(\theta)\otimes \text{BCp}(\psi) \to \text{BCp}(\theta\vee\psi)$ for independent $\theta,\psi$, we also define $\Phi$ to be zero on dependent $\theta,\psi$.  Then $\Phi$ is morphism of cellular form associated with $\vee$ and canonical $\overline{\mathbb{Z}}\rightsquigarrow\overline{\mathbb{Z}}$
\end{thm}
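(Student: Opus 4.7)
The plan is to verify the three axioms of Definition \ref{map form} directly, for $f=\vee:\mathcal{C}_I^m\times\mathcal{C}_J^m\to \mathcal{C}_{I\vee J}^m$ and the canonical $\vee$-homomorphism $\overline{\mathbb{Z}}\rightsquigarrow\overline{\mathbb{Z}}$. Two axioms are essentially immediate. Axiom (2) holds by construction: for dependent $(\theta,\psi)$ one has $r_f(\theta\vee\psi)<r_f(\theta)+r_f(\psi)=r(\theta,\psi)$, and $\Phi$ is defined to be zero there. For axiom (3), the rank-zero base case, both $\text{Ud}(\theta)$ and $\text{Ud}(\psi)$ are empty, so the permutation $|\theta,\psi|$ of Definition \ref{permu} is the identity with $\text{sgn}=1$; Lemma \ref{lemjoin0} gives $r_f(\theta\vee\psi)=0$, and under the identification of Remark \ref{form rem} the groups $\text{BCp}(\theta)$, $\text{BCp}(\psi)$, $\text{BCp}(\theta\vee\psi)$ are each $\mathbb{Z}$ generated by the completions themselves, so $\Phi(\theta\otimes\psi)=\theta\vee\psi$ matches the canonical $\vee$-homomorphism of constant copresheaves.

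The first genuinely nontrivial step is well-definedness on $\text{BCp}$. Given independent $\theta,\psi$ and $a=\sum k_i\eta_i\in \text{BCp}(\theta)$, $b=\sum s_j\mu_j\in \text{BCp}(\psi)$, I must show $\sum k_i s_j\,\eta_i\vee\mu_j\in \text{BCp}(\theta\vee\psi)$, i.e.\ that $\sum_{(i,j)\colon \eta_i\vee\mu_j<\omega}k_i s_j=0$ for every rank-one $\omega\leq\theta\vee\psi$. Let $(p^*,t^*)\in\text{Ud}(\theta\vee\psi)$ be the unique $?$-position of $\omega$, and use the bijection $\vee:\text{Ud}(\theta)\sqcup\text{Ud}(\psi)\to\text{Ud}(\theta\vee\psi)$ obtained from Lemma \ref{lemjoin2} and Definition \ref{permu} to transfer $(p^*,t^*)$ to some $(q^*,t^*)$; WLOG it lies in $\text{Ud}(\theta)$. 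Unpacking the join construction from the proof of Lemma \ref{lemjoin}, the defined coloring of $\omega$ at any position $(p,t)\in\text{Ud}(\theta\vee\psi)\setminus\{(p^*,t^*)\}$ restricts to a unique coloring on every sub-block of $p$ that lies in $I'$ or $J'$, which pins down the required value of $\eta_i$ on the corresponding element of $\text{Ud}(\theta)\setminus\{(q^*,t^*)\}$ or of $\mu_j$ on the corresponding element of $\text{Ud}(\psi)$. Hence the condition $\eta_i\vee\mu_j<\omega$ factors into two decoupled conditions: $\eta_i<\nu_\omega$ for a specific rank-one $\nu_\omega\leq\theta$ whose $?$ sits at $(q^*,t^*)$, and $\mu_j=\mu_\omega$ for a specific completion $\mu_\omega$ of $\psi$ (if no such pair exists the sum is empty and vanishes trivially). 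Thus the sum reduces to $\bigl(\sum_{\eta_i<\nu_\omega}k_i\bigr)\cdot s_{\mu_\omega}$, which is zero because $a\in \text{BCp}(\theta)$. The symmetric case $(q^*,t^*)\in\text{Ud}(\psi)$ is treated identically, giving well-definedness; on dependent pairs $\Phi=0$ trivially lies in $\text{BCp}(\theta\vee\psi)$, so the mapping part of axiom (1) is complete.

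Axiom (1) also asks for commutativity with $\partial$ when $r_f(\theta\vee\psi)=r_f(\theta)+r_f(\psi)$, i.e.\ in the independent case. This is precisely the content of the lemma immediately preceding the theorem, which verifies that $\times:\text{Cp}(\theta)\otimes \text{Cp}(\psi)\to \text{Cp}(\theta\vee\psi)$ is a chain map for the differentials of Definition \ref{diffcp}. Since $\text{BCp}\subset \text{Cp}$ is a subcomplex cut out by augmentation conditions, and the previous paragraph shows $\Phi$ restricts to these subcomplexes, commutativity of $\Phi$ with $\partial$ in the independent case follows at once.

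The main obstacle is the factorization step inside well-definedness: making rigorous the assertion that, for independent $\theta,\psi$, the defined entries of $\omega$ outside its single $?$-position decouple cleanly into constraints on $\eta_i$ and on $\mu_j$ alone. Concretely, one needs the observation that under independence of $I,J$ a $\mathbb{Z}_k$-coloring of a block $p\in(I\vee J)'$ is reconstructed uniquely from its restrictions to the sub-blocks of $p$ lying in $I'$ and $J'$, which in turn makes the decomposition of completions of $\omega$ of the form $\eta_i\vee\mu_j$ into (completion of $\nu_\omega$) $\times$ $(\mu_\omega)$ well-posed. Once this local reconstruction is in hand, the rest of the argument is bookkeeping.
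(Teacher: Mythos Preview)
Your argument is correct and takes a genuinely different route from the paper. The paper proves well-definedness of $\Phi$ by induction on $r_f(\theta)+r_f(\psi)$: after checking the rank-$0$ and rank-$1$ cases directly, the inductive step uses the chain-map identity $\partial\circ\times=\times\circ\partial$ (the preceding lemma) together with the cellular-form structure of $\text{BCp}$. Concretely, for $a\in\text{BCp}(\theta)\otimes\text{BCp}(\psi)$ one has $\partial\times(a)=\times\partial(a)\in\bigoplus_{\nu\lessdot\theta\vee\psi}\text{BCp}(\nu)$ by the inductive hypothesis, and this element is $\partial$-closed; since $\partial$ maps $\text{BCp}(\theta\vee\psi)$ isomorphically onto that kernel while $\partial$ is injective on all of $\text{Cp}(\theta\vee\psi)$, one concludes $\times(a)\in\text{BCp}(\theta\vee\psi)$. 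Your approach instead bypasses the induction entirely by verifying the defining condition of $\text{BCp}$ directly, via the combinatorial factorization $\{(i,j):\eta_i\vee\mu_j<\omega\}=\{i:\eta_i<\nu_\omega\}\times\{j:\mu_j=\mu_\omega\}$. The paper's proof is more structural and showcases the cellular machinery (indeed, the whole point of the section is that one can ``construct morphisms of cellular forms rather than compute them''); yours is more elementary and self-contained, but requires making precise the local reconstruction property you identify as the main obstacle. One small point worth sharpening: your factorization should also record that when $\omega_{p,t}$ fails to restrict to the already-defined entries of $\theta$ or $\psi$ on non-$\text{Ud}$ sub-blocks of $p$, no $\nu_\omega,\mu_\omega$ exist and the sum is empty --- you gesture at this parenthetically, but it is part of what makes the decoupling work.
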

\begin{proof}
	$\Phi$ fit the Definition \ref{map form} by last lemma if it is well defined. 
	We shell prove that $\Phi$ is well defined by induction. This is trivial for $\Phi$ on rank $0$ pair $(\theta,\psi)$. For rank $1$ pair $(\theta,\psi)$, without lose of generality, assume $r_f(\theta)=1,r_f(\psi)=0$, then every element in $\text{BCp}(\theta)\otimes \text{BCp}(\psi)$ have the form $(\sum k_i\eta_i) \otimes \psi$ where $\sum k_i=0$. Note that $\sum k_i\eta_i \times \psi= \sum k_i\eta_i\vee \psi$ by definition of $\times$, this element is located in $\text{BCp}(\theta\vee\psi)$ since $r_f(\theta\vee\psi)=1$.  
	
	Now assume $r_f(\theta)+r_f(\psi)>1$ and $\Phi$ is well defined for every pair with lower rank than $(\theta,\psi)$, $a \in \text{BCp}(\theta)\otimes \text{BCp}(\psi)$. Then $\partial \circ \times (a) = \times \circ \partial (a)$ by above lemma, $\times \circ \partial (a) \in \bigoplus_{\nu\lessdot\theta\vee\psi} \text{BCp}(\nu)$ by induction hypothesis. Note that $\partial \circ \times \circ \partial (a)=0$, so $\partial \circ \times (a)$ is located in the kernel of $\partial$ on $ \bigoplus_{\nu\lessdot\theta\vee\psi} \text{BCp}(\nu)$, then $\times (a) \in \text{BCp}(\theta\vee\psi)$ by property of cellular form .
	
\end{proof}

\vskip .5em
\subsection{Multiplicative structure on $H^*(F_{\mathbb{Z}_k^m}(\mathbb{C}^m,\Gamma))$}~
\vskip .5em
Recall that, by de Longueville and Schultz's result in \cite[Theorem 5.2]{DeLongueville2001}, the cup product on $H^*(\mathcal{M}(\mathcal{A}))$ is given by the composition of cross product and 
$\text{Tor}_{*}^\vee(\star,\star) : \text{Tor}_{*}(\delta_{(x,y)}\mathbb{Z},\delta^{(M,M)}\mathbb{Z}) \to \text{Tor}_{*}(\delta_{x\vee y} \mathbb{Z},\delta^M \mathbb{Z})$ induced by $\vee$-homomorphism $\star : \delta_{(x,y)}\mathbb{Z} \to \delta_{x\vee y} \mathbb{Z}$ and $\star : \delta^{(M,M)}\mathbb{Z} \to \delta^M \mathbb{Z}$ if $x+y=M$ and be $0$ otherwise.

The cross product is an isomorphism by K\"{u}nneth theorem since $\text{Tor}_{*}(\delta_{x}\mathbb{Z},\delta^{M}\mathbb{Z})$ is always free by results in Section 5.4, so we only need to study the map $\text{Tor}_{*}^\vee(\star,\star)$.

Now, for any pair $(x,y)$ satisfying codimension condition, pull back these two $\vee$-homomorphism $\star$ along diagram
$$\xymatrix{
	L_k^m(\Gamma) \times L_k^m(\Gamma) \ar[r]^-\vee \ar[d]^{\sigma\times\sigma} & L_k^m(\Gamma) \ar[d]^\sigma\\
	\mathcal{P}\times \mathcal{P} \ar[r]^-\vee  & \mathcal{P} 
}$$	
we have following commutative diagram of presheaves and copresheaves
$$\xymatrix{
	\sigma^*\delta_{(x,y)}\mathbb{Z} \ar@{~>}[r]^-{\sigma^*\star} \ar@{~>}[d] & \sigma^{*}\delta_{x\vee y}\mathbb{Z} \ar@{~>}[d] & \delta^{(\hat{0},\hat{0})}\mathbb{Z} \ar@{~>}[r]^-{\star} \ar@{~>}[d] &\delta^{\hat{0}}\mathbb{Z} \ar@{~>}[d]\\
	\delta_{(x,y)}\mathbb{Z} \ar@{~>}[r]^-{\star} &\delta_{x\vee y}\mathbb{Z}
	& \delta^{(M,M)}\mathbb{Z} \ar@{~>}[r]^-{\star} &\delta^{M}\mathbb{Z}
}$$
where vertical arrows in above two diagrams are canonical, this two diagrams induce a diagram of Tor 
$$\xymatrixcolsep{4pc}\xymatrix{
	\text{Tor}^{L_k^m(\Gamma)\times L_k^m(\Gamma)}_*(\sigma^*\delta_{(x,y)}\mathbb{Z}, \delta^{(\hat{0},\hat{0})}\mathbb{Z}) \ar[r]^-{\text{Tor}^{\vee}_*(\star,\star)} \ar[d]^\cong_{\text{Tor}_*^{\sigma\times\sigma}} & \text{Tor}^{L_k^m(\Gamma)}_*(\sigma^{*}\delta_{x\vee y}\mathbb{Z}, \delta^{\hat{0}}\mathbb{Z}) \ar[d]^\cong_{\text{Tor}_*^{\sigma}} \\
	\text{Tor}^{\mathcal{P}\times\mathcal{P}}_*(\delta_{(x,y)}\mathbb{Z} ,\delta^{(M,M)}\mathbb{Z}) \ar[r]^-{\text{Tor}^{\vee}_*(\star,\star)} & \text{Tor}^{\mathcal{P}}_*(\delta_{x\vee y}\mathbb{Z}, \delta^{M}\mathbb{Z})
}$$
The vertical arrows are isomorphism by Lemma \ref{lem of pull back} , then we only need to study the map $ \text{Tor}^{\vee}_*(\star,\star)$ in first row. This can be calculated by the map of spectral sequence in Lemma \ref{map of fibration}, in this case, let the map of fibration to be $$\xymatrix{
	L_k^m(\Gamma)\times L_k^m(\Gamma) \ar[r]^-\vee \ar[d]^{\pi\times\pi}& L_k^m(\Gamma) \ar[d]^{\pi} \\
	L(\Gamma) \times L(\Gamma) \ar[r]^-\vee & L(\Gamma)
}$$
we also write $\Psi: (I,J)\mapsto \mathcal{C}_I^m \times \mathcal{C}_J^m$, $\Psi' : I \mapsto \mathcal{C}_I^m$, $\mathcal{F}= \sigma^*\delta_{(x,y)}\mathbb{Z}$, $\mathcal{F}'= \sigma^{*}\delta_{x\vee y}\mathbb{Z}$, $\mathcal{G}=\delta^{(\hat{0},\hat{0})}\mathbb{Z}$, $\mathcal{G}'=\delta^{\hat{0}}\mathbb{Z}$, $k=\sigma^*(\star)$. Note that $E^2$ page of each side is natural isomorphic with the resulting Tor as we discussed in the proof of Lemma \ref{addstr}, so we only need to study the map of $E^2$.

Review that in Expample \ref{fm1} , we have already known the morphism of cellular form $\varLambda(L,\delta^{\hat{0}}\mathbb{Z})\otimes \varLambda(L,\delta^{\hat{0}}\mathbb{Z}) \to \varLambda(L,\delta^{\hat{0}}\mathbb{Z})$ associated with $\vee,\star$ is given by product of OS-algebra for any geometric lattice $L$,
then the $E^2$ page in Lemma \ref{map of fibration}
$$\text{Tor}_q^B(H_p(\Psi;\mathcal{F}),\mathcal{G}) \to \text{Tor}_q^B(H_p(\Psi';\mathcal{F}'),\mathcal{G}')$$
is given by cellular chain map $(a\otimes b) \otimes c \mapsto (a\cdot b)\otimes H_p(\vee;k)(c)$ where $c\in H_p(\Psi;\mathcal{F})$, $a\in A^*(L(\Gamma))_I,b\in A^*(L(\Gamma))_J$ and $a\cdot b$ be the product in OS-algebra.

The product of OS-algebra preserve rank, $a\cdot b=0$ for dependent $I,J$, so we only need to consider the morphism $H_p(\vee;k)$ on independent $I,J$, i.e., the map of homology $$H_*(\mathcal{C}_I^m \times \mathcal{C}_J^m; \delta_{(\theta,\psi)}\mathbb{Z})\to H_*(\mathcal{C}_{I\vee J}^m; \delta_{\theta\vee \psi}\mathbb{Z})$$ induced by join $\vee : \mathcal{C}_I^m \times \mathcal{C}_J^m \to \mathcal{C}_{I\vee J}^m$ and $\vee$-homomorphism $\star : \delta_{(\theta,\psi)}\mathbb{Z} \rightsquigarrow \delta_{\theta\vee \psi}\mathbb{Z}$ for independent $I,J$ where $\pi(\theta)=I, \pi(\psi)=J, \sigma(\theta)=x, \sigma(\psi)=y$. This map is just the morphism of cellular form $$\Phi : \varLambda(\mathcal{C}_I^m \times \mathcal{C}_J^m, \overline{\mathbb{Z}})_{(\theta,\psi)}=\text{BCp}(\theta)\otimes\text{BCp}(\psi) \to \varLambda(\mathcal{C}_{I\vee J}^m, \overline{\mathbb{Z}})_{\theta\vee\psi}=\text{BCp}(\theta\vee\psi)$$
associated with join $\vee : \mathcal{C}_I^m \times \mathcal{C}_J^m \to \mathcal{C}_{I\vee J}^m$ and canonical $\vee$-homomorphism $\overline{\mathbb{Z}} \rightsquigarrow \overline{\mathbb{Z}}$ by Theorem \ref{cc map}. Recall that we assume $x,y$ satisfying codimension condition (otherwise, the cup product is zero and there is nothing to talk), so $\theta,\psi$ is also independent in $L_k^m(\Gamma)$ by Equation \ref{eqcod}. This
  morphism $\Phi$ has been given in Theorem \ref{mf1}, it is the linear extension of $$\eta \otimes\nu \mapsto \text{sgn}(|\theta,\psi|) \eta \vee \nu$$ where $\eta,\nu$ is completion of $\theta,\psi$ respectively.

Combine above discussion, we obtain the multiplicative structure on $H^*(F_{\mathbb{Z}_k^m}(\mathbb{C}^m,\Gamma))$.
\begin{thm}
	For any two elements $x=a\otimes\sum k_i\eta_i \in H^*(F_{\mathbb{Z}_k^m}( \mathbb{C}^m, \Gamma))_{\theta}, y=b\otimes\sum s_i\nu_i \in H^*(F_{\mathbb{Z}_k^m}( \mathbb{C}^m, \Gamma))_{\psi}$, the cup product $x\cup y$ is contained in $H^*(F_{\mathbb{Z}_k^m}( \mathbb{C}^m, \Gamma))_{\theta\vee \psi}$, given by 
	\begin{equation}\label{prod1}
		x\cup y =\begin{cases}
			\text{sgn}(|\theta,\psi|)(a\cdot b) \otimes \sum_{ij}k_is_j\eta_i\vee\nu_j  & \text{ for independent } \theta,\psi\\
			0 & \text{ otherwise }	
	\end{cases}\end{equation}
	where $a\in A^*(L(\Gamma))_{\pi(\theta)},b \in A^*(L(\Gamma))_{\pi(\psi)},\sum k_i\eta_i\in \text{BCp}(\theta), \sum s_i\nu_i \in \text{BCp}(\psi)$, $\vee$ is the join operation in $L_k^m(\Gamma)$, $a\cdot b$ is the product in OS-algebra $A^*(L(\Gamma))$ and $|\theta,\psi|$ is the permutation in Definition \ref{permu}.
\end{thm}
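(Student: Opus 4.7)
The plan is to chase the cup product through the reductions set up in Sections 3--5. By Theorem \ref{ssa}, the cup product on $H^*(\mathcal{M}(\mathcal{A}))$ factors as the composition of the simplicial cross product with the map $\operatorname{Tor}^{\mathcal{P}}_{*}(\vee\text{-hom})$ induced by $\star:\delta_{(x,y)}\mathbb{Z}\rightsquigarrow\delta_{x\vee y}\mathbb{Z}$ and $\star:\delta^{(M,M)}\mathbb{Z}\rightsquigarrow\delta^{M}\mathbb{Z}$, and it is zero unless $x+y=M$, i.e.\ unless $x,y$ satisfy the codimension condition. First I would translate this to $L_k^m(\Gamma)$ by pulling back along the commutative square built from $\sigma\times\sigma$ and $\sigma$: by Lemma \ref{lem of pull back}, the vertical maps $\operatorname{Tor}^{\sigma\times\sigma}$ and $\operatorname{Tor}^{\sigma}$ are isomorphisms, so computing cup product reduces to computing $\operatorname{Tor}^{\vee}_*(\star,\star)$ between $\operatorname{Tor}^{L_k^m(\Gamma)\times L_k^m(\Gamma)}_*(\sigma^*\delta_{(x,y)}\mathbb{Z},\delta^{(\hat 0,\hat 0)}\mathbb{Z})$ and $\operatorname{Tor}^{L_k^m(\Gamma)}_*(\sigma^*\delta_{x\vee y}\mathbb{Z},\delta^{\hat 0}\mathbb{Z})$. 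Under codimension condition, Equation~\eqref{eqcod} forces the only relevant pairs $(\theta,\psi)\in\sigma^{-1}x\times\sigma^{-1}y$ to be independent in $L_k^m(\Gamma)$, which is exactly the dichotomy appearing in \eqref{eq of prod}.

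Next I would apply Lemma \ref{map of fibration} to the map of fibrations $\pi\times\pi\to\pi$ over $\vee:L(\Gamma)\times L(\Gamma)\to L(\Gamma)$. As in the proof of Lemma \ref{addstr}, both spectral sequences degenerate on $E^2$ (because $r_f$ and $r_b$ are constrained along each fiber of $\sigma$), so $\operatorname{Tor}^{\vee}_*(\star,\star)$ is identified with the map on $E^2$. That $E^2$-map is, tautologically, the $\operatorname{Tor}$ over $L(\Gamma)$ induced by the $f_b$-homomorphism $H_p(\vee;\sigma^*\star)$ on fibers and $\star$ on the copresheaf side. So the problem splits into two pieces: the base piece over the bond lattice $L(\Gamma)$, and the fiber piece $H_p(\vee;\sigma^*\star)$.

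For the base piece, Example \ref{OS} identifies the cellular form of $(L(\Gamma),\delta^{\hat 0}\mathbb{Z})$ with the OS-algebra $A^*(L(\Gamma))$, and Example \ref{fm1} identifies the morphism of cellular form associated with $\vee$ and $\star$ with OS-algebra multiplication; Theorem \ref{cc map} then shows the cellular chain map realizing the base part of $E^2$ is $(a\otimes b)\otimes c\mapsto (a\cdot b)\otimes H_p(\vee;\sigma^*\star)(c)$. In particular, the product vanishes if $\pi(\theta),\pi(\psi)$ are dependent in $L(\Gamma)$. For the fiber piece restricted to independent $I,J$, the fibers are $\mathcal{C}_I^m\times\mathcal{C}_J^m$ and $\mathcal{C}_{I\vee J}^m$, with cellular forms $\operatorname{BCp}(\theta)\otimes\operatorname{BCp}(\psi)$ and $\operatorname{BCp}(\theta\vee\psi)$ given by Theorem \ref{cf1} and the K\"unneth theorem. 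The fiber part of $E^2$ is then, again by Theorem \ref{cc map}, the morphism of cellular form associated with $\vee:\mathcal{C}_I^m\times\mathcal{C}_J^m\to\mathcal{C}_{I\vee J}^m$ and the canonical $\vee$-homomorphism $\overline{\mathbb{Z}}\rightsquigarrow\overline{\mathbb{Z}}$, which is exactly the map $\Phi$ identified in Theorem \ref{mf1}.

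The hard part, and really the only new verification, is the identification of $\Phi$ with $\eta\otimes\nu\mapsto\operatorname{sgn}(|\theta,\psi|)\,\eta\vee\nu$; but this is already handled by the uniqueness of morphisms of cellular form (Theorem \ref{form map}) together with the combinatorial sign computation in the two displayed equations just before Theorem \ref{mf1}. Assembling these pieces, a class $a\otimes\sum k_i\eta_i$ at grading $\theta$ cupped with $b\otimes\sum s_j\nu_j$ at grading $\psi$ produces $(a\cdot b)\otimes \Phi(\sum k_i\eta_i\otimes\sum s_j\nu_j)=\operatorname{sgn}(|\theta,\psi|)(a\cdot b)\otimes\sum_{ij}k_is_j\,\eta_i\vee\nu_j$ when $\theta,\psi$ are independent, and $0$ otherwise, which is \eqref{eq of prod}. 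The only bookkeeping worth double-checking is that the cross product used in Theorem \ref{ssa} matches the tensor product of cellular chains under the identifications we have made, and that the sign from the K\"unneth/cellular-tensor identification lines up with the shuffle sign in the cross product; this is the step I would write out most carefully, using the explicit chain formula $(k,t)_\#$ from the proof of Theorem \ref{f-hom} to compare with the shuffle formula in Section 2.5.
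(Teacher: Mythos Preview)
Your proposal is correct and follows essentially the same route as the paper: pull back along $\sigma$ via Lemma \ref{lem of pull back}, apply Lemma \ref{map of fibration} to the map of fibrations over $\vee:L(\Gamma)\times L(\Gamma)\to L(\Gamma)$, use the $E^2$ degeneration from Lemma \ref{addstr}, identify the base piece with OS-algebra multiplication via Example \ref{fm1}, and identify the fiber piece with the morphism $\Phi$ of Theorem \ref{mf1}. The paper handles your final bookkeeping concern about the cross product more tersely: it observes that the cross product is an isomorphism by K\"unneth (since every $\text{Tor}^{\mathcal{P}}_*(\delta_x\mathbb{Z},\delta^M\mathbb{Z})$ is free by Section 5.4), so one only needs to compute $\text{Tor}^\vee_*(\star,\star)$, and thereby sidesteps any explicit shuffle-versus-tensor sign comparison.
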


\vskip .5em
\subsection{Cohomology ring of $H^*(F_{\mathbb{Z}_2^m}(\mathbb{R}^m,\Gamma))$}~
\vskip .5em

In the case of real number, the standard action $\mathbb{Z}_2^m \curvearrowright \mathbb{R}^m$ is real part of standard $\mathbb{Z}_2^m \curvearrowright \mathbb{C}^m$, similarly, define "chromatic orbit configuration space"
$$F_{\mathbb{Z}_2^m}(\mathbb{R}^m,\Gamma)= \{(x_1,x_2,...,x_n)\in \mathbb{R}^m \times...\times \mathbb{R}^m | Gx_i \cap Gx_j = \emptyset \text{ for } (ij)\in E(\Gamma)\}$$
$$V_{(e,g)}= \{(x_1,...,x_n)\in \mathbb{R}^m \times...\times \mathbb{R}^m| gx_i=x_j \text{ for } e=(ij)\}$$  and $\mathcal{A}=\{V_{(e,g)}|(e,g)\in E(\Gamma)\times \mathbb{Z}_2^m\}$ be a subspace arrangement in $\mathbb{R}^{mn}$ with intersection lattice $\mathcal{P}$ and complement $\mathcal{M}(\mathcal{A})=F_{\mathbb{Z}_2^m}(\mathbb{R}^m,\Gamma)$. This arrangement $\mathcal{A}$
is not a ($\geq 2$)-arrangement, the cohomology ring $H^*(F_{\mathbb{Z}_2^m}(\mathbb{R}^m,\Gamma))$ is described by \cite[Theorem 7.5]{DeLongueville2001} up to an "error term". 

More precisely, there is a $\mathcal{P}$-filtration $\{F_x| x\in \mathcal{P}\} $ on $H^*(\mathcal{M}(\mathcal{A}))$ satisfying $$x\leq y \Rightarrow F_x H^*(\mathcal{M}(\mathcal{A}))\subseteq F_y H^*(\mathcal{M}(\mathcal{A}))$$ $$a \in   F_x, b \in F_y \Rightarrow a\cup b \in F_{x\vee y}$$ The associated $\mathcal{P}$-graded ring $Gr = \bigoplus_{u\in \mathcal{P}} Gr_u$ is defined as $$Gr_u = F_u/\sum_{v<u} F_v$$ with multiplicative structure $Gr_x\otimes Gr_y \xrightarrow{\cdot} Gr_{x\vee y}$ given by 
\begin{equation*}
	[a]\cdot [b] =\begin{cases}
		[a\cup b] & \text{ for } x+y=M\\
		0 & \text{ otherwise }	
\end{cases}\end{equation*}
there is an isomorphism of ring $Gr(H^*(\mathcal{M}(\mathcal{A})))\cong \bigoplus_{x\in \mathcal{P} }\text{Tor}^{\mathcal{P}}_{*}(\delta_x \mathbb{Z},\delta^{M}\mathbb{Z})$ where the ring structure on right side is given by the composition of cross product and $\text{Tor}_{i+j}^\vee(\star,\star)$ if $x+y=M$ and be $0$ otherwise, see \cite[Theorem 7.5]{DeLongueville2001} for details. Notice that, in the case of real number, the intersection lattice $\mathcal{P}$ is isomorphic with the intersection lattice in the case of complex number (with different real codimension), then the ring structure of $\bigoplus_{x\in \mathcal{P} }\text{Tor}^{\mathcal{P}}_{*}(\delta_x \mathbb{Z},\delta^{M}\mathbb{Z})$ discussed in last subsection give us the following theorem.
\begin{thm}\label{thm real}
	Let $m>1$ and using $\mathbb{Z}_2$ coefficient, there is a filtration of $H^*(F_{\mathbb{Z}_2^m}( \mathbb{R}^m, \Gamma))$ such that the associated $Gr(H^*(F_{\mathbb{Z}_2^m}( \mathbb{R}^m, \Gamma)))$ is a $L_2^m(\Gamma)$-graded algebra, and every piece of $\theta \in L_2^m(\Gamma)$ is group $A^*(L(\Gamma))_{\pi(\theta)}\otimes \text{BCp}(\theta)$ , has degree $(m-1)r_b(\theta)$, the rank of this group is given by Equation \ref{eq of rank1} (let $k=2$). The product structure on $Gr(H^*(F_{\mathbb{Z}_2^m}( \mathbb{R}^m, \Gamma)))$ is given by Equation \ref{prod1}.
\end{thm}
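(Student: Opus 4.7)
The plan is to reduce Theorem \ref{thm real} to the complex-case machinery already developed in Sections 5.2--5.7, by invoking \cite[Theorem 7.5]{DeLongueville2001} as stated in the paragraph preceding the theorem. That result supplies, for any real subspace arrangement, a $\mathcal{P}$-filtration on $H^*(\mathcal{M}(\mathcal{A}); \mathbb{Z}_2)$ whose associated graded ring is isomorphic to $\bigoplus_{x \in \mathcal{P}} \text{Tor}^{\mathcal{P}}_*(\delta_x \mathbb{Z}, \delta^M \mathbb{Z})$ (with $\mathbb{Z}_2$ coefficients), where multiplication is assembled from the cross product and $\text{Tor}^\vee_*(\star,\star)$ whenever $x + y = M$ and is zero otherwise. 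Thus the entire task collapses to computing this Tor ring, and this computation is combinatorially identical to the one already carried out.

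First I would observe that the real intersection lattice $\mathcal{P}$ of $\{V_{(e,g)}\}$ in $\mathbb{R}^{mn}$ is canonically isomorphic to the complex intersection lattice, because the defining equations $gx_i = x_j$ are linear over $\mathbb{Q}$ and therefore impose the same incidence pattern over $\mathbb{R}$ and $\mathbb{C}$. Consequently the comparison map $\sigma : L_2^m(\Gamma) \to \mathcal{P}$ and all structural results of Section~5.2 carry over; only the codimension computation changes, now reading $\text{codim}_{\mathbb{R}}(\sigma(\theta)) = r_f(\theta) + m \cdot r_b(\theta)$. In particular the codimension condition $x + y = M$ translates into the same independence condition on $\theta, \psi$ in $L_2^m(\Gamma)$ that appears in Equation~\eqref{prod1}.

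Next, because the Tor groups depend only on the abstract lattice together with its copresheaf, Lemma~\ref{addstr} applies verbatim (with $\mathbb{Z}_2$ coefficients) and yields the additive decomposition
\[
\text{Tor}^{\mathcal{P}}_n(\delta_x \mathbb{Z}, \delta^M \mathbb{Z}) \cong \bigoplus_{\theta \in \sigma^{-1}x,\, r_b(\theta)+r_f(\theta)=n} A^*(L(\Gamma))_{\pi(\theta)} \otimes \text{BCp}(\theta),
\]
with ranks given by Equation~\eqref{eq of rank1} at $k = 2$. The multiplicative description from Section~5.7 is likewise determined by the morphism of cellular forms of Theorem~\ref{mf1}, which is purely combinatorial in $L_2^m(\Gamma)$ and hence still applies, producing Equation~\eqref{prod1}. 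The only genuinely new bookkeeping is the degree shift: in the real case \cite[Theorem 7.5]{DeLongueville2001} shifts cohomological degree by $\text{codim}_{\mathbb{R}}(x) - n$ rather than $2\,\text{codim}_{\mathbb{C}}(x) - n$, so a class in the $\theta$-piece lives in degree $(r_f(\theta) + m \cdot r_b(\theta)) - (r_f(\theta) + r_b(\theta)) = (m-1)r_b(\theta)$, matching the statement.

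The one point demanding care, and the main obstacle, is that the entire Tor apparatus was developed over $\mathbb{Z}$ whereas the theorem is stated with $\mathbb{Z}_2$ coefficients. I would verify that all cellular forms, spectral sequences, and pull-back isomorphisms in Sections~2--4 remain valid after base change to $\mathbb{Z}_2$; this is routine because each $\text{BCp}(\theta)$ and each OS-piece $A^*(L(\Gamma))_I$ is free abelian, so the universal coefficient theorem produces the required identification with no hidden torsion to spoil collapsing arguments. The sign $\text{sgn}(|\theta,\psi|)$ in Equation~\eqref{prod1} collapses harmlessly to $1 \in \mathbb{Z}_2$, so the product formula continues to hold as written.
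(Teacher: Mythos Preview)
Your proposal is correct and follows essentially the same approach as the paper: invoke \cite[Theorem 7.5]{DeLongueville2001} to reduce to the Tor ring, observe that the real and complex intersection lattices coincide, and import the computations of Sections~5.3--5.7 wholesale. You are actually more careful than the paper in two places---you spell out the degree computation $(m-1)r_b(\theta)$ explicitly and you address the passage to $\mathbb{Z}_2$ coefficients---but the strategy is identical.
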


\section{Appendix}
\subsection{Property of cellular form}

\begin{lem}\label{lem of form}
	If the cellular form $\varLambda(P,\mathcal{G})$ exists, then
	\begin{enumerate}
		
		\item $\bigoplus\limits_{y<x,r(y)=0}\mathcal{G}(y) \to \mathcal{G}(x)$ is always a surjection for $r(x)>0$.
		\item $\partial$ maps $\varLambda(P,\mathcal{G})_{x_1}$ isomorphically on $\ker(\bigoplus_{y<x_1}\mathcal{G}(y) \to \mathcal{G}(x_1))$ for any rank $1$ element $x_1$.
		\item Every piece $\varLambda(P,\mathcal{G})_x$ is free if $\mathcal{G}$ is free.
	\end{enumerate}
\end{lem}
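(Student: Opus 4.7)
The plan is to read off all three statements from the defining axioms applied to the sub-chain complexes $\varLambda(P,\mathcal{G})_{\leq x}$. By axiom (1) of Definition \ref{cellular form}, the degree-$i$ part of $\varLambda(P,\mathcal{G})_{\leq x}$ is $\bigoplus_{y\leq x,\,r(y)=i}\varLambda(P,\mathcal{G})_{y}$, so this chain is concentrated in degrees $0,\ldots,r(x)$, and axioms (2) and (3) determine its homology completely. Throughout I use the identification $\varLambda(P,\mathcal{G})_{y}=\mathcal{G}(y)$ for rank-zero $y$ noted in Remark \ref{form rem}.

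For part (1), fix $x$ with $r(x)>0$. The degree-$0$ part of $\varLambda(P,\mathcal{G})_{\leq x}$ is then exactly $\bigoplus_{y<x,\,r(y)=0}\mathcal{G}(y)$, and axiom (3) identifies its $H_{0}$ with $\mathcal{G}(x)$. Because axiom (3) asserts an isomorphism of copresheaves, the induced quotient $\bigoplus_{y<x,\,r(y)=0}\mathcal{G}(y)\twoheadrightarrow H_{0}\cong\mathcal{G}(x)$ sends each summand $\mathcal{G}(y)$ to $\mathcal{G}(x)$ via the extension map of $\mathcal{G}$: indeed, $\varLambda(P,\mathcal{G})_{\leq y}$ is concentrated in degree zero with value $\mathcal{G}(y)$, and the inclusion $\varLambda_{\leq y}\hookrightarrow\varLambda_{\leq x}$ on $H_{0}$ is, by axiom (3), the extension $\mathcal{G}(y)\to\mathcal{G}(x)$. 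Surjectivity follows automatically as the target is a cokernel.

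For part (2), for a rank-$1$ element $x_{1}$ every strict predecessor has rank zero, so $\varLambda(P,\mathcal{G})_{\leq x_{1}}$ is the two-term complex $\varLambda(P,\mathcal{G})_{x_{1}}\xrightarrow{\partial}\bigoplus_{y<x_{1}}\mathcal{G}(y)$. Axiom (2) gives $\ker\partial=H_{1}(\varLambda(P,\mathcal{G})_{\leq x_{1}})=0$, so $\partial$ is injective. The identification from part (1) then identifies the image of $\partial$ with the kernel of the surjection $\bigoplus_{y<x_{1}}\mathcal{G}(y)\twoheadrightarrow\mathcal{G}(x_{1})$, which is the claim.

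For part (3), I argue by induction on $r(x)$. The base case $r(x)=0$ is Remark \ref{form rem}: $\varLambda(P,\mathcal{G})_{x}=\mathcal{G}(x)$ is free by assumption on $\mathcal{G}$. For the inductive step with $r(x)>0$, axiom (2) applied in top degree yields $\ker\bigl(\partial:\varLambda(P,\mathcal{G})_{x}\to\bigoplus_{y\lessdot x}\varLambda(P,\mathcal{G})_{y}\bigr)=0$, so $\partial$ embeds $\varLambda(P,\mathcal{G})_{x}$ into a direct sum which is free by the inductive hypothesis; a subgroup of a free abelian group is free. The only delicate point in the whole argument is the naturality of the $H_{0}$-identification used in part (1), but this is precisely what the phrase ``isomorphism of copresheaves'' in axiom (3) encodes, so no real obstacle arises.
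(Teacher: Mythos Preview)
Your proof is correct and follows essentially the same approach as the paper's own proof: both arguments read off all three statements from the homological properties of the sub-chain complexes $\varLambda(P,\mathcal{G})_{\leq x}$, using the copresheaf isomorphism of axiom (3) to identify the degree-zero quotient with the extension maps of $\mathcal{G}$, and using the vanishing of positive-degree homology from axiom (2) for injectivity of $\partial$ in top degree. Your induction for part (3) is in fact slightly cleaner than the paper's, which treats the rank-$1$ case separately before invoking the same subgroup-of-free-is-free principle for higher ranks.
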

\begin{proof}~
	\begin{enumerate}
		
		\item For any $x\in P,r(x)>0,y<x,r(y)=0$, we have following commutative diagram
		$$\xymatrix{
			\varLambda(P,\mathcal{G})_{y} \ar[r]^\cong\ar[d]& 	\mathcal{G}(y)\ar[d]\\
			H_0(\varLambda(P,\mathcal{G})_{\leq x}) \ar[r]^-\cong & \mathcal{G}(x)
		}$$
		where the first vertical arrow is induced by inclusion of chain $\varLambda(P,\mathcal{G})_{\leq y} \hookrightarrow \varLambda(P,\mathcal{G})_{\leq x}$ and the second vertical arrow is extension map of $\mathcal{G}$,
		then the following diagram 
		$$\xymatrix{
		\bigoplus\limits_{y<x,r(y)=0}\varLambda(P,\mathcal{G})_{y} \ar[r]^-\cong\ar[d]& 	\bigoplus\limits_{y<x,r(y)=0}\mathcal{G}(y)\ar[d]\\
		H_0(\varLambda(P,\mathcal{G})_{\leq x}) \ar[r]^-\cong & \mathcal{G}(x)
		}$$
		is also commutative, where the first arrow is quotient map from degree zero chain of $\varLambda(P,\mathcal{G})_{\leq x}$ to degree zero homology of $\varLambda(P,\mathcal{G})_{\leq x}$ 
		\item In above diagram, let $x$ be a rank $1$ element, then $\partial$ maps $\varLambda(P,\mathcal{G})_{x}$ isomorphically on the kernel of first vertical arrow since $H_1(\varLambda(P,\mathcal{G})_{\leq x})=0$ by definition of cellular form.
		\item For rank zero $x_0$, $\varLambda(P,\mathcal{G})_{x_0}=\mathcal{G}(x_0)$ is free. For rank $1$ element $x_1$, $\varLambda(P,\mathcal{G})_{x_1}\cong \ker(\bigoplus_{y<x_1}\mathcal{G}(y) \to \mathcal{G}(x_1))$ is also free. Then  $\varLambda(P,\mathcal{G})_x$ is free for any $x\in P, r(x)>1$ by an induction on the rank and the property of cellular form. 
	\end{enumerate}
\end{proof}

The following lemma is a little trick for cellular form. 

\begin{lem}\label{trick of cf}
	Assume that we have already known $(P,\mathcal{G})$ is cellular, $\varLambda$ is a $P$-graded differential  $\mathbb{Z}$-module, satisfying 
	\begin{enumerate}
		\item $\varLambda_{x_0}=\mathcal{G}(x_0)$ for any rank $0$ element $x_0$.
		\item $\varLambda_{x_1} = \ker(\bigoplus_{y<x_1}\mathcal{G}(y) \to \mathcal{G}(x_1))$ and differential $\partial$ on $\varLambda_{x_1}$ is the embedding $\varLambda_{x_1} \hookrightarrow \bigoplus_{y<x_1}\mathcal{G}(y)$ for any rank $1$ element $x_1$.
		\item $0\to \varLambda_{x} \xrightarrow{\partial} \bigoplus_{y \lessdot x}\varLambda_y \xrightarrow{\partial} \bigoplus_{y < x,r(y)=r(x)-2}\varLambda_y$ is exact for $r(x)>1$. 
	\end{enumerate}
Then $\varLambda$ is cellular form of $(P,\mathcal{G})$.
\end{lem}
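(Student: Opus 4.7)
The plan is to show $\varLambda$ is a cellular form of $(P,\mathcal{G})$ by constructing a $P$-graded chain isomorphism between $\varLambda$ and a reference cellular form $\varLambda'$. Such a $\varLambda'$ exists because $(P,\mathcal{G})$ is assumed cellular: Lemma \ref{ex form} supplies one explicitly as ${}_{\underline{\mathbb{Z}}}E^1_{*,0}$. I first note that $\varLambda'$ itself satisfies hypotheses (1), (2), (3) of the present lemma: (1) is Remark \ref{form rem}, (2) is Lemma \ref{lem of form}(2), and (3) is the unpacking of the vanishings $H_{r(x)}(\varLambda'_{\leq x}) = H_{r(x)-1}(\varLambda'_{\leq x}) = 0$ guaranteed by $\varLambda'$ being a cellular form. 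Thus the hypotheses of the lemma hold symmetrically on the two sides of the intended comparison.

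Next I build isomorphisms $f_x:\varLambda_x \xrightarrow{\cong} \varLambda'_x$ commuting with $\partial$ by induction on $r(x)$. For $r(x)=0$ both sides are canonically $\mathcal{G}(x)$, so take $f_x=\mathrm{id}$. For $r(x)=1$ both sides are canonically identified with $\ker(\bigoplus_{y<x}\mathcal{G}(y)\to\mathcal{G}(x))$ with $\partial$ the canonical inclusion (by condition (2) for $\varLambda$ and Lemma \ref{lem of form}(2) for $\varLambda'$); take $f_x$ to be the unique isomorphism intertwining the two differentials. For $r(x)\geq 2$, apply condition (3) to both $\varLambda$ and $\varLambda'$: this realizes $\varLambda_x$ and $\varLambda'_x$ as the kernels of the middle differentials of the two respective three-term complexes $\bigoplus_{y\lessdot x}\varLambda_y \to \bigoplus_{z<x,\,r(z)=r(x)-2}\varLambda_z$ and its primed counterpart. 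The inductive hypothesis supplies isomorphisms $\oplus f_y$ on the middle terms and $\oplus f_z$ on the right terms intertwining those middle differentials; the induced map of kernels is an isomorphism $\varLambda_x \xrightarrow{\cong} \varLambda'_x$, which we take for $f_x$. It commutes with $\partial$ by construction.

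Assembling, $f:\varLambda\to\varLambda'$ is a $P$-graded chain isomorphism, hence restricts to chain isomorphisms $\varLambda_{\leq x}\xrightarrow{\cong}\varLambda'_{\leq x}$ for every $x$, compatibly with the inclusions $\varLambda_{\leq y}\hookrightarrow\varLambda_{\leq x}$. Because $\varLambda'$ is a cellular form, $\varLambda_{\leq x}$ inherits trivial positive homology, the identification $H_0(\varLambda_{\leq x})\cong\mathcal{G}(x)$, and coincidence of the extension maps on $H_0$ with those of the copresheaf $\mathcal{G}$. This verifies all three clauses of Definition \ref{cellular form} for $\varLambda$. The one delicate point is establishing hypothesis (3) for the reference form $\varLambda'$ at the very start, since this is what makes the inductive kernel argument symmetric between the two sides; after that the entire construction is a formal diagram chase.
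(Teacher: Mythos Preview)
Your proof is correct and follows essentially the same approach as the paper: both construct a $P$-graded chain isomorphism $f:\varLambda\to\varLambda(P,\mathcal{G})$ by taking the canonical identifications in ranks $0$ and $1$ and then extending inductively via $f=\partial^{-1}f\partial$ (equivalently, via the induced map on kernels coming from condition~(3)). Your write-up is more explicit than the paper's---in particular you spell out that the reference cellular form $\varLambda'$ itself satisfies condition~(3), which is exactly what makes $\partial^{-1}$ well-defined in the inductive step---but the underlying argument is the same.
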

\begin{proof}
	Let $\varLambda(P,\mathcal{G})$ be the cellular form of $(P,\mathcal{G})$, we are going to define a map $f : \varLambda \to \varLambda(P,\mathcal{G})$ of differential module such that $f$ maps $\varLambda_x$ to $\varLambda(P,\mathcal{G})_x$ for any $x\in P$. $\bigoplus_{r(x)\leq 1}\varLambda_x$ has already been isomorphic with   $\bigoplus_{r(x)\leq 1}\varLambda(P,\mathcal{G})_x$ (as differential module) by our assumption of $\varLambda$ and Lemma \ref{lem of form}. For $r(x)>1$, we define $f : \varLambda_x \to \varLambda(P,\mathcal{G})_x$ inductively by $f = \partial^{-1}f\partial$, then $\varLambda$ is isomorphic with $\varLambda(P,\mathcal{G})$ as $P$-graded differential module. 
\end{proof}
\vskip .5em
\subsection{Construct cellular form}~
\vskip .5em
For a given pair $(P,\mathcal{G})$, we can check the existence of cellular form $\varLambda(P,\mathcal{G})$ (and construct it if it exists) in polynomial time by following algorithm.
\begin{enumerate}
	\item If $\bigoplus\limits_{y<x,r(y)=0}\mathcal{G}(y) \to \mathcal{G}(x)$ is always a surjection for any $x\in P, r(x)>0$, then go to step (2), otherwise, $\varLambda(P,\mathcal{G})$ do not exists and exit.
	\item Let $\varLambda(P,\mathcal{G})_{x_0} = \mathcal{G}(x_0)$ for any rank zero $x_0$, $\varLambda(P,\mathcal{G})_{x_1}=ker(\bigoplus_{y<x_1}\mathcal{G}(y) \to \mathcal{G}(x_1))$ for any rank $1$ element $x_1$ and $\partial$ on $\varLambda(P,\mathcal{G})_{x_1}$ be the embedding $ \varLambda(P,\mathcal{G})_{x_1} \hookrightarrow \bigoplus_{y<x_1}\mathcal{G}(y)$. If there is some element $x\in P,r(x)>1$, then go to step (3), otherwise, $\varLambda(P,\mathcal{G})$ is cellular form of $(P,\mathcal{G})$ and exit.
	\item For any $x\in P, r(x)>1$, assume $\varLambda(P,\mathcal{G})$ has been defined on elements with lower rank than $r(x)$. If $$\ker(\bigoplus\limits_{y<x,r(y)=0}\mathcal{G}(y) \to \mathcal{G}(x))=\sum_{x_1<x, r(x_1)=1}\ker(\bigoplus\limits_{y<x_1,r(y)=0}\mathcal{G}(y) \to \mathcal{G}(x_1))$$
	and positive degree homology of $\varLambda(P,\mathcal{G})_{<x}$ is trivial, then define $\varLambda(P,\mathcal{G})_x$ be the kernel of $\partial$ on $\bigoplus_{y\lessdot x}\varLambda(P,\mathcal{G})_y$ and the $\partial$ on $\varLambda(P,\mathcal{G})_x$ be the embedding $\varLambda(P,\mathcal{G})_x\hookrightarrow \bigoplus_{y\lessdot x}\varLambda(P,\mathcal{G})_y$ and repeat this step, otherwise, $(P,\mathcal{G})$ is not (homological) cellular.
\end{enumerate}
The proof is easy via definition of cellular form and Lemma \ref{lem of form}.



\end{document}